\newtheorem{theorem}{Theorem}[section]
\theoremstyle{plain}
\newtheorem{claim}[theorem]{Claim}
\newtheorem{fact}[theorem]{Fact}
\newtheorem{cor}[theorem]{Corollary}
\newtheorem{corollary}[theorem]{Corollary}
\newtheorem{definition}[theorem]{Definition}
\newtheorem{example}[theorem]{Example}
\newtheorem{lemma}[theorem]{Lemma}
\newtheorem{prop}[theorem]{Proposition}
\newtheorem{remark}[theorem]{Remark}
\numberwithin{equation}{section}
\newcommand{\R}{\mathbb{R}}
\newcommand{\N}{\mathbb{N}}
\newcommand{\ii}{\mathbf{i}}
\newcommand{\jj}{\mathbf{j}}
\newcommand{\iiv}{\overline{\imath}}
\newcommand{\jjv}{\overline{\jmath}}
\newcommand*{\e}[1]{\text{e}^{#1}}
\newcommand{\vertiii}[1]{{\left\vert\kern-0.25ex\left\vert\kern-0.25ex\left\vert #1
		\right\vert\kern-0.25ex\right\vert\kern-0.25ex\right\vert}}
\newcommand*{\arabicdec}[1]{\the\numexpr\value{#1}\relax}
\definecolor{qqqqff}{rgb}{0,0,1}
\definecolor{ududff}{rgb}{0.30196078431372547,0.30196078431372547,1}
\definecolor{ccqqqq}{rgb}{0.8,0,0}
\definecolor{uuuuuu}{rgb}{0.26666666666666666,0.26666666666666666,0.26666666666666666}
\definecolor{cqcqcq}{rgb}{0.7529411764705882,0.7529411764705882,0.7529411764705882}
\begin{document}
\title[Hausdorff measure and Assouad dimension of generic conformal IFS]{ Hausdorff measure and Assouad dimension of generic self-conformal IFS on the line}

\author{Bal\'azs B\'ar\'any}
\address{Bal\'azs B\'ar\'any, Budapest University of Technology and Economics, MTA-BME Stochastics Research Group, P.O. Box 91, 1521 Budapest, Hungary} \email{balubs@math.bme.hu}

\author{Istv\'an Kolossv\'ary}
\address{Istv\'an Kolossv\'ary, University of St Andrews,  School of Mathematics and Statistics, St Andrews, KY16 9SS, Scotland;
\newline Budapest University of Technology and Economics, MTA-BME Stochastics Research Group, P.O. Box 91, 1521 Budapest, Hungary} \email{itk1@st-andrews.ac.uk}

\author{Micha\l\ Rams}
\address{Micha\l\ Rams, Institute of Mathematics, Polish Academy of Sciences,\newline  ul. \'Sniadeckich 8, 00-656 Warszawa, Poland}
\email{rams@impan.pl}

\author{K\'aroly Simon}
\address{K\'aroly Simon, Budapest University of Technology and Economics, MTA-BME Stochastics Research Group, P.O. Box 91, 1521 Budapest, Hungary} \email{simonk@math.bme.hu}

\thanks{2020 {\em Mathematics Subject Classification.} Primary 28A80 Secondary 28A78, 37E05
\\ \indent
{\em Key words and phrases.} self-conformal set, weak separation property, Assouad dimension, transversality condition, translation family, self-similar set}

\begin{abstract}
This paper considers self-conformal iterated function systems (IFSs) on the real line whose first level cylinders overlap. In the space of self-conformal IFSs, we show that generically  (in topological sense) if the attractor of such a system has Hausdorff dimension less than $1$ then it has zero appropriate dimensional Hausdorff measure and its Assouad dimension is equal to $1$. Our main contribution is in showing that if the cylinders intersect then the IFS generically does not satisfy the weak separation property and hence, we may apply a recent result of Angelevska, K\"aenm\"aki and Troscheit. This phenomenon holds for transversal families (in particular for the translation family) typically, in the self-similar case, in both topological and in measure theoretical sense, and in the more general self-conformal case in the topological sense.

\end{abstract}
%\date{\today}

\maketitle

%\layout{}
\thispagestyle{empty}

%%%%%%%%%%%%%%%%%%%%%%%%%%%%%%%%%%%%%%%%%%%%%%%%%%%%%%%%%%%%%%%%%%%%%%%%%%%%%%%%
\section{Informal Statement}\label{sec:00}

A self-conformal iterated function system (IFS) on the real line is a finite collection $\mathcal{S}:=\left\{S_i\right\}_{i=1}^{m}$ of $\mathcal{C}^{1+\alpha}$ contracting conformal maps on a non-degenerate compact interval $X \subset \mathbb{R}$ such that each $S_i$ extends to an injective, contracting conformal mapping on an open set that contains $X$. Let us recall that a map $f\colon \R\mapsto\R$ is $\mathcal{C}^{1+\alpha}$ if $f$ is continuously differentiable and the derivative $f'$ is non-vanishing and H\"older continuous with H\"older exponent $\alpha$. It is well-known that there exists a unique, non-empty compact set, the attractor $\Lambda$ associated to $\mathcal{S}$, which satisfies
\begin{equation*}
\Lambda = \bigcup_{i=1}^m S_i(\Lambda).
\end{equation*}
The set $\Lambda$ is called a self-conformal set.

Let $\Sigma^*:=\bigcup_{n=1}^{\infty }\left\{1, \dots ,m\right\}^{n}$ be the collection of all finite length words $\iiv$, and we obtain $\iiv^-$ by dropping the last symbol of $\iiv$. For compositions of maps, we always write $S_{\iiv}=S_{i_1 \dots i_n}:=S_{i_1}\circ\cdots\circ S_{i_n}$. Numerous different separation conditions exist in the literature depending on the extent of separation between the cylinder sets $S_i(\Lambda)$. Of these, let us recall the following. An IFS $\mathcal{S}$ with attractor $\Lambda$
\begin{enumerate}[(i)]
\item has an \texttt{exact overlap} if
\begin{equation*}
\text{there exist } \iiv,\jjv\in\Sigma^\ast,\ \iiv\ne\jjv \mbox{ such that }
S_{\iiv}|_{\Lambda}\equiv S_{\jjv}|_{\Lambda};
\end{equation*}
\item satisfies the \texttt{Strong Separation Property (SSP)} if
\begin{equation*}
S_i(\Lambda)\cap S_j(\Lambda)=\emptyset \text{ for distinct } i,j\in\{1,\ldots,m\};
\end{equation*}
\item satisfies the \texttt{Weak Separation Property (WSP)} \cite{lau1999multifractal, Zerner_WSP_96ProcAMS} if
\begin{equation}\label{eq:92}
\sup\left\{
\#\Phi(x,r):
x\in\Lambda \mbox{ and }
r>0
\right\}<\infty,
\end{equation}
where $B(x,r)$ is the ball in $\mathbb{R}^d$  centered at $x$ with radius $r$ and where
\begin{equation}\label{eq:93}
\Phi(x,r):=
\big\{
S_{\iiv}|_\Lambda:
\mathrm{diam}(S_{\iiv}(\Lambda))
\leq
r
<
\mathrm{diam}(S_{\iiv^-}(\Lambda)),\,
%\\
S_{\iiv}(\Lambda)\cap B(x,r)\neq  \emptyset,\ \iiv\in\Sigma^{\ast}
\big\}.
\end{equation}
\end{enumerate}
The WSP is strictly weaker than the SSP, moreover, an IFS with an exact overlap can satisfy the WSP but never the SSP. We remark that the Open Set Condition, which is weaker than the SSP but stronger than the WSP, plays an important role in our proofs, but is not essential in stating our results. Therefore, we postpone its definition to Section~\ref{sec:98}.

If a conformal IFS on $\mathbb{R}$ satisfies the SSP,   then the Hausdorff and Assouad dimension of its attractor $\Lambda$, denoted $\dim_{\mathrm H}\Lambda$ and $\dim_{\mathrm A}\Lambda$, are both equal to the so-called conformal dimension, see Section~\ref{sec:98} for the definition. Furthermore, the $\dim_{\mathrm H}\Lambda$-dimensional Hausdorff measure $\mathcal{H}^{\dim_{\mathrm H}\Lambda}(\Lambda)\in(0,\infty)$. In fact, recently Angelevska, K\"aenm\"aki and Troscheit showed in~\cite{KaenmakiEtal_SelfConf_20BLMS} that this holds true even under the weaker WSP condition, provided that $\dim_{\mathrm H}\Lambda<1$. Moreover, failure of the WSP implies that $\mathcal{H}^{\dim_{\mathrm H}\Lambda}(\Lambda)=0$ and $\dim_{\mathrm A}\Lambda=1$.

Notice that the SSP is an open condition, i.e. if an IFS satisfies the SSP, then a small enough perturbation of it will still satisfy the SSP. The main question this paper addresses is if a conformal IFS on $\mathbb{R}$ does {\bf not} satisfy the SSP, then in some generic sense, how significant are the overlaps between the cylinder sets?
\vspace{1mm}

\noindent{\bf Informal statement.} Our main result is that on a proper space of conformal IFSs on $\mathbb{R}$, it is a generic property that if an IFS does not satisfy the SSP, then
%both in a topological and a measure theoretical sense,
\begin{equation*}
\text{it does {\bf not} satisfy the WSP {\bf nor} does it have exact overlaps.}
\end{equation*}
In particular, combining this with the aforementioned characterization of the WSP in~\cite{KaenmakiEtal_SelfConf_20BLMS}, we get that restricting to conformal IFSs with conformal dimension $<1$,
\begin{equation*}
\text{generically, failure of SSP} \;\;\Longrightarrow\;\; \mathcal{H}^{\dim_{\mathrm H}\Lambda}(\Lambda)=0 \;\text{ and }\; \dim_{\mathrm A}\Lambda=1.
\end{equation*}
In the next section, we define all the necessary terminology in order to state our results precisely.

%%%%%%%%%%%%%%%%%%%%%%%%%%%%%%%%%%%%%%%%%%%%%%%%%%%%%%%%%%%%%%%%%%%%%%%%%%%%%%%%
\section{Further motivation and main results}\label{sec:01}

A self-similar IFS $\mathcal{S}$ on $\mathbb{R}$ is a special conformal IFS consisting of strictly contracting similarity maps:
\begin{equation}\label{eq:19}
  \mathcal{S}:=
  \left\{
  S_i(x)= r_i x +t_i
  \right\}_{i=1}^{m}, x\in\mathbb{R}, r_i\in (-1,1)\setminus\left\{0\right\}, t_i\in \mathbb{R}.
\end{equation}
The attractor $\Lambda$ of $\mathcal{S}$ is called a self-similar set.

The  Assouad dimension  was introduced by  P. Assouad \cite{assouad1977espaces},\cite{assouad1979etude} in relation with quasi-conformal mappings and embeddability problems, see
\cite{heinonen2012lectures, luukkainen1998assouad, robinson2010dimensions}. Recently though, considerable attention has been given to its study in Fractal geometry, see
\cite{KaenmakiEtal_SelfConf_20BLMS, falconer2019assouad, FarkasFraser_AssouadSelfSim_2015AdvMath, FraserEtal_AssouadSelfSim_2015AdvMath, garca2019assouad, kaenmaki2015weak, orponen2019assouad} or the recent book~\cite{fraser2020assouadBook} to name a few. The Assouad dimension of a set $E \subset \mathbb{R}^d$ is
\begin{multline*}
\dim_{\rm A}E :=\inf\Bigg\{\alpha:\; \exists \ C>0,  \mbox{ such that }\, \forall\  0<r<R,  \\
\sup\limits_{x\in E}
N_r\left(B(x,R)\cap E\right) \leq C\left(\frac{R}{r}\right)^{\alpha}
\Bigg\},
\end{multline*}
where $N_r\left(F\right)$
is the smallest number of open balls centered in the set $F \subset \mathbb{R}^d$ of radius $r$
with which we can cover $F$. We denote the Hausdorff dimension of a set $E \subset \mathbb{R}^d$ by $\dim_{\mathrm H}E$ and its $s$-dimensional Hausdorff measure by $\mathcal{H}^s(E)$.

In particular, on $\mathbb{R}$, an interesting dichotomy was proved between the separation property WSP and the Assouad dimension of self-similar sets by Fraser, Henderson, Olson and Robinson in~\cite[Theorem~1.3.]{FraserEtal_AssouadSelfSim_2015AdvMath} and extended to self-conformal sets by Angelevska, K\"aenm\"aki and Troscheit in \cite[Theorem~4.1, Corollary~4.2]{KaenmakiEtal_SelfConf_20BLMS}. We summarize these results in the following theorem.
\begin{theorem}[\cite{KaenmakiEtal_SelfConf_20BLMS, FraserEtal_AssouadSelfSim_2015AdvMath}]
 \label{thm:97}
  Assume that $\Lambda$ is the attractor of either a self-similar or  self-conformal IFS $\mathcal{S}$ on $\mathbb{R}$ such that $\Lambda$ is not a singleton. Then
 \begin{equation*}
\dim_{\mathrm A}\Lambda=\begin{cases}
 \dim_{\mathrm H}\Lambda, & \text{if }\mathcal{S} \text{ satisfies the Weak Separation Property},  \\
1, &\text{otherwise}.
\end{cases}
\end{equation*}
\end{theorem}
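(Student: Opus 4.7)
\emph{Plan.} Since $\Lambda\subset\mathbb{R}$, the inequalities $\dim_{\mathrm H}\Lambda\leq\dim_{\mathrm A}\Lambda\leq 1$ are automatic, so the task splits into proving $\dim_{\mathrm A}\Lambda\leq\dim_{\mathrm H}\Lambda$ under WSP and $\dim_{\mathrm A}\Lambda\geq 1$ when WSP fails.

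\emph{WSP direction.} Let $s=\dim_{\mathrm H}\Lambda$. Under WSP the similarity/conformal dimension equals $s$ and $\mathcal{H}^s(\Lambda)\in(0,\infty)$ by a classical argument of Zerner in the self-similar case, transferred to the self-conformal setting via bounded distortion. To bound $N_r(B(x,R)\cap\Lambda)$ for $0<r<R$, I would first cover $B(x,R)\cap\Lambda$ by the cylinders appearing in $\Phi(x,R)$, of which there are at most $M:=\sup_{y,\rho}\#\Phi(y,\rho)<\infty$ by \eqref{eq:92}, each of diameter $\asymp R$. Next, refine each such cylinder into its descendants of diameter $\asymp r$. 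A standard pressure/subshift counting argument, combined with the WSP multiplicity bound, shows that each cylinder of diameter $\asymp R$ contains at most $C(R/r)^s$ descendants of diameter $\asymp r$. Putting these two steps together yields $N_r(B(x,R)\cap\Lambda)\leq MC(R/r)^s$, hence $\dim_{\mathrm A}\Lambda\leq s$.

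\emph{Non-WSP direction.} The strategy is to produce a weak tangent $F$ of $\Lambda$ containing an interval, since the Assouad dimension dominates the Hausdorff dimension of every weak tangent. Failure of WSP supplies sequences $x_n\in\Lambda$, $r_n>0$ with $N_n:=\#\Phi(x_n,r_n)\to\infty$. By bounded distortion and the definition \eqref{eq:93}, the $N_n$ distinct cylinders $\{S_{\iiv}(\Lambda):\iiv\in\Phi(x_n,r_n)\}$ have diameters $\asymp r_n$ and all meet $B(x_n,r_n)$. Rescaling by $T_n(y)=(y-x_n)/r_n$ places $N_n$ near-copies of $\Lambda$ of unit size inside $[-2,2]$; by compactness in the Hausdorff metric I would extract a subsequential limit $F$.

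\emph{Main obstacle.} The crux is showing that $F$ contains an interval rather than some other fractal. Since $N_n\to\infty$ while the rescaled cylinders are packed in a bounded region, pigeonhole forces, for each $n$, a pair $S_{\iiv_n}, S_{\jjv_n}$ of distinct cylinders whose images are within Hausdorff distance $O(r_n/N_n)$ of each other. Composing with $S_{\iiv_n}^{-1}$ and invoking bounded distortion produces two copies of $\Lambda$ at relative displacement $\delta_n\to 0$ (normalised by $\mathrm{diam}(\Lambda)$). Propagating this near-translation through the IFS generates additive orbits of translates of $\Lambda$ with mesh tending to $0$, whose Hausdorff limit covers a non-trivial interval inside $F$. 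Consequently $\dim_{\mathrm H}F=1$ and $\dim_{\mathrm A}\Lambda\geq 1$, completing the proof.
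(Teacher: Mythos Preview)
The paper does not prove this theorem at all: it is quoted verbatim from the literature (Fraser--Henderson--Olson--Robinson for the self-similar case, Angelevska--K\"aenm\"aki--Troscheit for the self-conformal extension) and is used as a black box. So there is no ``paper's own proof'' to compare your proposal against.

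That said, your outline is in the spirit of the cited proofs. Two remarks. First, in the WSP branch you invoke $\mathcal{H}^s(\Lambda)\in(0,\infty)$, but this is only known when $\dim_{\mathrm H}\Lambda<1$ (cf.\ Theorem~\ref{thm:hausmeas}); fortunately your actual counting argument does not use positive measure, only the uniform bound $M$ from \eqref{eq:92} together with the pressure estimate, so you should simply drop that sentence. Second, in the non-WSP branch the step ``propagating this near-translation through the IFS generates additive orbits \dots whose Hausdorff limit covers a non-trivial interval'' is the genuine content of the cited papers and is nontrivial in the conformal case: one needs bounded distortion uniformly along arbitrarily long compositions to ensure the near-translates at every scale accumulate rather than spread out. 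As a sketch this is fine, but if you were to write it out you would essentially be reproducing \cite[Theorem~4.1]{KaenmakiEtal_SelfConf_20BLMS}.
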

For a higher dimensional generalization see \cite{garca2019assouad}. For self-similar sets, Farkas and Fraser \cite[Corollary~3.2]{FarkasFraser_AssouadSelfSim_2015AdvMath} pointed out another equivalent characterization of the weak separation property, relating it to the positivity of the appropriate dimensional Hausdorff measure of the attractor. This result was also extended by Angelevska, K\"aenm\"aki and Troscheit in \cite[Corollary~4.2]{KaenmakiEtal_SelfConf_20BLMS} for self-conformal sets.

\begin{theorem}[\cite{KaenmakiEtal_SelfConf_20BLMS, FarkasFraser_AssouadSelfSim_2015AdvMath}]
 \label{thm:hausmeas}
  Assume that $\Lambda$ is the attractor of either a self-similar or  self-conformal IFS $\mathcal{S}$ on $\mathbb{R}$ such that $\Lambda$ is not a singleton and $\dim_{\rm H}\Lambda<1$. Then
\begin{align*}
0<&\mathcal{H}^{\dim_{\rm H}\Lambda}(\Lambda)<\infty,  &&\text{if }\mathcal{S} \text{ satisfies the Weak Separation Property}, \\
&\mathcal{H}^{\dim_{\rm H}\Lambda}(\Lambda)=0, &&\text{otherwise}.
\end{align*}
\end{theorem}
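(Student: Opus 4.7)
My plan is to deduce Theorem~\ref{thm:hausmeas} from Theorem~\ref{thm:97} by promoting the Assouad-dimension dichotomy to a Hausdorff-measure dichotomy. The key intermediate statement is the equivalence, valid for any self-conformal attractor $\Lambda$ with $s := \dim_{\mathrm{H}}\Lambda < 1$,
\begin{equation*}
0 < \mathcal{H}^s(\Lambda) < \infty \iff \Lambda \text{ is Ahlfors } s\text{-regular}.
\end{equation*}
Ahlfors $s$-regularity immediately forces $\dim_{\mathrm{A}}\Lambda = s$, so the equivalence above converts the Hausdorff-measure dichotomy into the Assouad-dimension dichotomy of Theorem~\ref{thm:97}: if WSP holds then $\dim_{\mathrm{A}}\Lambda = s$, hence Ahlfors regularity, hence $\mathcal{H}^s(\Lambda)\in (0,\infty)$; if WSP fails then $\dim_{\mathrm{A}}\Lambda = 1 > s$ precludes Ahlfors regularity and therefore $\mathcal{H}^s(\Lambda) = 0$.

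The core of the argument is the implication $\mathcal{H}^s(\Lambda) > 0 \Longrightarrow$ Ahlfors $s$-regularity; the reverse direction is automatic from the definition. I would fix an $\mathcal{H}^s$-density point $x_0\in\Lambda$ so that $\mathcal{H}^s(\Lambda\cap B(x_0,\rho))\gtrsim \rho^s$ for all small $\rho$, and then propagate this single Frostman lower bound to a uniform one at every $y\in\Lambda$ and every small $r$ using self-conformality. Concretely, for each pair $(y,r)$ I would choose finite words $\iiv,\jjv\in\Sigma^\ast$ with $y\in S_{\iiv}(\Lambda)$, $S_{\jjv}(\Lambda)$ contained in a fixed neighborhood of $x_0$, and $\mathrm{diam}(S_{\iiv}(\Lambda))\asymp\mathrm{diam}(S_{\jjv}(\Lambda))\asymp r$. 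The composition $S_{\iiv}\circ S_{\jjv}^{-1}$ is then bi-Lipschitz with constants controlled by the bounded distortion principle for $\mathcal{C}^{1+\alpha}$ conformal maps, so it transports the measure estimate at $x_0$ into $\mathcal{H}^s(\Lambda\cap B(y,r))\gtrsim r^s$ with constants independent of $(y,r)$. The matching upper Frostman bound $\mathcal{H}^s(\Lambda\cap B(y,r))\lesssim r^s$ comes from running the same transport backwards, combined with the general density-theory fact that upper $s$-densities are bounded almost everywhere whenever $0<\mathcal{H}^s(\Lambda)<\infty$.

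The main obstacle is precisely this propagation step: one must verify that the constants in $\mathcal{H}^s(\Lambda\cap B(y,r))\gtrsim r^s$ are genuinely uniform in $(y,r)$, which requires controlling the bounded-distortion constants of $S_{\iiv}\circ S_{\jjv}^{-1}$ as both words range over unboundedly many levels, and showing that transporting cylinders of diameter $\asymp r$ can always be fitted inside the required neighborhoods. For the self-similar case of Farkas and Fraser this step collapses to a one-liner since similarities have no distortion; the self-conformal extension of Angelevska, K\"aenm\"aki and Troscheit in \cite{KaenmakiEtal_SelfConf_20BLMS} relies on a Koebe-type distortion lemma together with the compactness of $\Lambda$ inside the interior of the domain of conformality of each $S_i$, which together furnish the uniform constants.
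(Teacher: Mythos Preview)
The paper does not supply its own proof of Theorem~\ref{thm:hausmeas}; it is quoted from \cite{KaenmakiEtal_SelfConf_20BLMS,FarkasFraser_AssouadSelfSim_2015AdvMath} as background. So there is no in-paper argument to compare against, and the relevant question is simply whether your plan is correct.

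It has a genuine logical gap in the WSP case. You establish the equivalence $0<\mathcal{H}^s(\Lambda)<\infty \iff \Lambda$ is Ahlfors $s$-regular, together with the (true, easy) one-way implication ``Ahlfors $s$-regular $\Rightarrow \dim_{\mathrm A}\Lambda=s$''. From these you can only deduce
\[
0<\mathcal{H}^s(\Lambda)<\infty \;\Longrightarrow\; \dim_{\mathrm A}\Lambda=s,
\]
which is exactly the contrapositive you need for the \emph{failure} of WSP. But in the WSP case you write ``$\dim_{\mathrm A}\Lambda=s$, hence Ahlfors regularity'', and that uses the \emph{converse} implication $\dim_{\mathrm A}\Lambda=s \Rightarrow$ Ahlfors regular, which you have not proved and which is false for general sets (there are sets, even quasi-self-similar ones, with $\dim_{\mathrm A}=\dim_{\mathrm H}$ that are not Ahlfors regular). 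In other words, Theorem~\ref{thm:97} by itself does not carry enough information to recover the positive-measure statement; in \cite{KaenmakiEtal_SelfConf_20BLMS} both theorems are obtained simultaneously from a single structural input (WSP $\Rightarrow$ the natural conformal measure is Ahlfors $s$-regular), rather than one being derived from the other.

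There is a second, smaller gap in the non-WSP case: from ``not Ahlfors $s$-regular'' and your equivalence you only get $\mathcal{H}^s(\Lambda)\notin(0,\infty)$, and you still need to exclude $\mathcal{H}^s(\Lambda)=\infty$. For self-conformal attractors with $s=\dim_{\mathrm H}\Lambda$ possibly strictly below the conformal dimension $s_0$, the finiteness $\mathcal{H}^s(\Lambda)<\infty$ is not automatic from the standard covering by cylinders (that only gives $\mathcal{H}^{s_0}(\Lambda)<\infty$) and requires its own argument.
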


We are now ready to introduce the space of self-conformal IFSs in Section~\ref{sec:11}, moreover, parameterized families of self-conformal IFSs satisfying a transversality condition in Section~\ref{sec:12}.

%%%%%%%%%%%%%%%%%%%%%%%%%%%%%%%%%%%%%%%%%%%%%%%%%%%%%%%%%%%%%%%%%%%%%%%%%%%
%%%%%%%%%%%%%%%%%%%%%%%%%%%%%%%%%%%%%%%%%%%%%%%%%%%%%%%%%%%%%%%%%%%%%%%%%%%
\subsection{Generic self-conformal IFSs on the line}\label{sec:11}
We begin by defining the space of self-conformal IFSs on the line that we work with.

\begin{definition}[Conformal IFSs on the line]\label{def:96}
	Let $V \subset \mathbb{R}$ be a non-empty open interval.
	We say that a $\mathcal{C}^{1+\alpha}$ function
	$h:V\to\mathbb{R}$ is a conformal mapping if it has non-vanishing derivative.
	
	Furthermore, $\mathcal{S}=\left\{S_1, \dots ,S_m\right\}$ is a
	conformal IFS on a compact interval $X \subset \mathbb{R}$
	if $S_i:X\to X$ and $S_i$ extends to a conformal injective mapping
	$S_i:V\to V$, where $V$ is an open interval with $X \subset V$ and
	\begin{equation*}
	\sup\limits_{x\in V}|S'_i(x)|<1 \;\;\text{for all } i\in[m]:=\{1,\ldots,m\}.
	\end{equation*}
\end{definition}

\begin{definition}[Space of self-conformal IFSs on the line]\label{def:95}
Let $\Theta^m(X)$ denote the collection of conformal IFSs of $m$ mappings on the compact interval $X$. For $0<\beta<\rho<1$, we define
	\begin{equation*}
	\Theta_{\beta,\rho}^m(X):
	=
	\big\{\mathcal{S}\in \Theta^m(X):
	\beta \leq |S'_i(x)| \leq \rho,\ \forall x\in X,\ i\in[m]
	\big\}.
	\end{equation*}
	
	For a function $h\in \mathcal{C}^{1+\alpha}(X,\mathbb{R})$  we write
	$$
	\vertiii{h}:=
	\|h\|_{\sup}
	+
	\|h'\|_{\sup}
	+
	\sup\limits_{x,y\in X}\frac{|h'(x)-h'(y)|}{|x-y|^{\alpha}}.
	$$
	For $\mathcal{T}=(T_1, \dots ,T_m),\mathcal{G}=(G_1, \dots ,G_m)\in\Theta_{\beta,\rho}^m(X)$ we define
	\begin{equation}\label{eq:38}
	\mathrm{dist}(\mathcal{T},\mathcal{G}):=
	\max\limits_{i\in[m]}
	\vertiii{T_i-G_i}.
	\end{equation}
	Then $\Theta_{\beta,\rho}^m(X)$ endowed with this metric is a complete metric space.
\end{definition}

We remind the reader that a subset of a topological space is called \texttt{a set of first category} or \texttt{meagre} if it is a countable union of nowhere dense closed sets (i.e. the interior is an empty set). We call a subset of a topological space a \texttt{$G_\delta$-set} (or have the $G_\delta$ property)  if it is a countable intersection of open sets. Observe that the complement of a set of first category is a dense $G_\delta$-set by definition.
We can now define a `generic' self-conformal IFS on $\Theta_{\beta,\rho}^m(X)$.

\begin{definition}\label{ter:99}
	Let $\mathcal{P}$ be a property of self-conformal IFSs on the line.
	We say that \texttt{a generic self-conformal IFS on the line has property $\mathcal{P}$} if for all non-empty compact intervals
	$X \subset \mathbb{R}$, $m>2$ and $0<\beta<\rho<1$ the set of IFSs from $\Theta_{\beta,\rho}^m(X)$ which do \emph{not}
	have property $\mathcal{P}$ is a set of first category.
\end{definition}

Our main result is the following. Its proof is in Section~\ref{sec:04}.

\begin{theorem}\label{thm:98}
	For a generic (in the sense of Definition~\ref{ter:99}) self-conformal IFS on the line either
	\begin{equation*}
	\text{the SSP holds } \;\text{ \emph{OR} }\; \text{ the WSP does not hold and there are no exact overlaps.}
	\end{equation*}
\end{theorem}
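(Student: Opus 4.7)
The plan is to apply Baire category in the complete metric space $\Theta_{\beta,\rho}^m(X)$. Writing $\mathcal{U}$ for the open set of IFSs satisfying SSP, let
\begin{equation*}
B := \{\mathcal{S} \in \mathcal{U}^c :\ \mathcal{S}\text{ satisfies WSP or has an exact overlap}\}.
\end{equation*}
Decompose $B = B_{\mathrm{EO}} \cup B_{\mathrm{WSP}}$ according to which alternative in the theorem is violated. It suffices to show that each of $B_{\mathrm{EO}}, B_{\mathrm{WSP}}$ is meagre in $\Theta_{\beta,\rho}^m(X)$.

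For $B_{\mathrm{EO}} = \bigcup_{\iiv \ne \jjv} E_{\iiv,\jjv}$ with $E_{\iiv,\jjv} := \{\mathcal{S} : S_\iiv|_\Lambda \equiv S_\jjv|_\Lambda\}$, closedness of each $E_{\iiv,\jjv}$ follows from continuity of the attractor in the Hausdorff metric together with continuity of $\mathcal{S} \mapsto S_\iiv$ in the $\vertiii{\cdot}$-norm. For empty interior, fix $\mathcal{S} \in E_{\iiv,\jjv}$ and, after stripping any common prefix, assume $i_1 \ne j_1$. A one-parameter local $\mathcal{C}^{1+\alpha}$ perturbation $t\mapsto\mathcal{S}^t$, obtained by adding a bump $t\psi$ to $S_{i_1}$ with $\supp\psi$ chosen near $S_{i_2\cdots i_n}(x_0)$ for some $x_0\in\Lambda$ and disjoint from the points where the symbol $i_1$ is reused in $\iiv$ or appears in $\jjv$, produces a linear-in-$t$ shift of $S^t_\iiv(x_0)$ while leaving $S^t_\jjv$ unchanged near $x_0$. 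Thus $\mathcal{S}^t\notin E_{\iiv,\jjv}$ for all sufficiently small $t\ne 0$, so each $E_{\iiv,\jjv}$ is nowhere dense.

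For $B_{\mathrm{WSP}} = \bigcup_{N\in\N} W_N$ with $W_N := \{\mathcal{S}\in\mathcal{U}^c :\sup_{x,r}\#\Phi_\mathcal{S}(x,r)\le N\}$, each $W_N$ is closed: $\mathcal{U}^c$ is closed, and $\mathcal{S}\mapsto\sup_{x,r}\#\Phi_\mathcal{S}(x,r)$ is lower semicontinuous, since distinct restrictions of finitely many cylinders $S_\iiv$ remain distinct under sufficiently small $\vertiii{\cdot}$-perturbations and the diameter brackets in \eqref{eq:93} are preserved. The core of the argument is empty interior of $W_N$. Given $\mathcal{S}\in W_N$, if arbitrarily small perturbations recover the SSP we are done; otherwise $\mathcal{S}$ lies in the interior of $\mathcal{U}^c$, and there is a persistent overlap $p\in S_i(\Lambda)\cap S_j(\Lambda)$. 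Iterating the IFS near $p$ produces arbitrarily many cylinders $S_\iiv(\Lambda)$ of comparable diameter clustering near $p$, and a generic $\vertiii{\cdot}$-small perturbation of a single map shifts these compositions by quantitatively independent amounts, separating any accidental coincidences so that $\#\Phi_{\mathcal{S}'}(x,r)>N$ at a suitable scale.

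The principal obstacle is making this last \emph{splitting} precise: because $\#\Phi_\mathcal{S}$ is a global statistic, one must identify a transversal direction in $\Theta_{\beta,\rho}^m(X)$ along which enough of the compositions $S_\iiv$ move independently to break the bound $N$, while also preserving some overlap so as to remain in $\mathcal{U}^c$. This is exactly where the transversality framework and translation-family reductions developed in Section~\ref{sec:12} enter the proof. Granted this, the countable unions over $(\iiv,\jjv)$ and over $N$ display $B$ as a countable union of closed nowhere-dense sets, and Baire category concludes.
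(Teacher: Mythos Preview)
Your treatment of $B_{\mathrm{EO}}$ is in the right spirit and close to what the paper does in Lemma~\ref{lemma:40}, though the paper's bump construction (Fact~\ref{fact:96}) is considerably more delicate than your sketch suggests: the four-case analysis there exists precisely because the point $S_{i_2\cdots i_n}(x_0)$ may coincide with other iterates where $i_1$ is reused, and one cannot simply ``choose $\supp\psi$ disjoint'' from those points without further work.

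The real gap is in your handling of $B_{\mathrm{WSP}}$. You correctly identify that the empty-interior step for $W_N$ is the crux, but you then defer it to ``the transversality framework and translation-family reductions developed in Section~\ref{sec:12}''. This is not what the paper does, and in fact cannot work: as Remark~2.14 notes, a transversal family $\{\mathcal{S}^{\underline{\lambda}}:\underline{\lambda}\in B\}$ is nowhere dense in $\Theta_{\beta,\rho}^m(X)$, so results about such families say nothing about generic elements of the full space. The paper's density argument (Lemma~\ref{lem:45}) is entirely self-contained and proceeds by a different mechanism: starting from any $\mathcal{G}\notin\mathscr{SSP}$, one constructs (Facts~\ref{fact:96}--\ref{fact:98}) an arbitrarily close $\widetilde{\mathcal{S}}$ possessing a common fixed point $\widetilde{x}$ of two compositions $S_{\pmb{\omega}},S_{\pmb{\tau}}$ with $\log|S'_{\pmb{\omega}}(\widetilde{x})|/\log|S'_{\pmb{\tau}}(\widetilde{x})|\notin\mathbb{Q}$, and then (Fact~\ref{fact:99}) a Dirichlet approximation argument shows that this irrationality forces $\#\Phi_N(\widetilde{x},r)\to\infty$ along a sequence of scales. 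Your heuristic (``many cylinders of comparable diameter clustering near $p$, separated by a generic perturbation'') does not supply this; without the common-fixed-point/irrational-ratio mechanism there is no reason the clustered cylinders give \emph{distinct restrictions} $S_{\iiv}|_\Lambda$, which is exactly what $\#\Phi$ counts.

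A secondary point: your closedness argument for $W_N$ via lower semicontinuity of $\sup_{x,r}\#\Phi_{\mathcal{S}}(x,r)$ is plausible but not obviously correct, since both the maps and the attractor $\Lambda$ vary with $\mathcal{S}$, and the defining inequalities in \eqref{eq:93} mix strict and non-strict bounds. The paper sidesteps this entirely by using the identity limit criterion \eqref{eq:30} to write $\mathscr{NOWSP}=\bigcap_n\mathcal{V}_{1/n}$ with each $\mathcal{V}_{1/n}$ manifestly open (Lemma~\ref{lem:50}); this is cleaner than your $W_N$ decomposition.
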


Actually, we prove a stronger result on the genericity of the failure of the WSP, see Section~\ref{sec:04} for the precise details.

The conformal dimension, see Section~\ref{sec:98} for the definition, is a natural upper bound for $\dim_{\mathrm H}\Lambda$. Combining Theorem~\ref{thm:98} with \cite[Theorem~4.1, Corollary~4.2]{KaenmakiEtal_SelfConf_20BLMS} immediately gives the following.
\begin{cor}\label{cor:01}
	Let us restrict to the open set of self-conformal IFSs on the line with conformal dimension strictly less than $1$. Then, a generic IFS either
	\begin{equation*}
	\text{satisfies the SSP} \;\;\text{ \emph{OR} }\;\; \mathcal{H}^{\dim_{\mathrm H}\Lambda}(\Lambda)=0 \,\text{ and }\, \dim_{\mathrm A}\Lambda=1.
	\end{equation*}
\end{cor}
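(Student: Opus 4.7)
The plan is to derive this as a direct consequence of Theorem~\ref{thm:98} combined with the Fraser--Henderson--Olson--Robinson / Angelevska--K\"aenm\"aki--Troscheit dichotomies recorded in Theorems~\ref{thm:97} and \ref{thm:hausmeas}. The only work is bookkeeping: verifying that restricting to the sub-family with conformal dimension $<1$ is compatible with Definition~\ref{ter:99}, and that the extra hypothesis $\dim_{\mathrm H}\Lambda<1$ required by those two theorems is automatic on this sub-family.

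First I would observe that the conformal dimension (defined in Section~\ref{sec:98} as the zero of an appropriate pressure function) depends continuously on $\mathcal{S}\in\Theta_{\beta,\rho}^m(X)$ in the metric \eqref{eq:38}, by standard perturbation theory for the transfer operator of a $\mathcal{C}^{1+\alpha}$ conformal IFS. Consequently
\[
\mathcal{U}_{\beta,\rho}^m(X):=\bigl\{\mathcal{S}\in\Theta_{\beta,\rho}^m(X):\text{the conformal dimension of }\mathcal{S}\text{ is }<1\bigr\}
\]
is an open subset of $\Theta_{\beta,\rho}^m(X)$, which justifies the phrasing of the corollary. Since the conformal dimension is a general upper bound for $\dim_{\mathrm H}\Lambda$, every IFS in $\mathcal{U}_{\beta,\rho}^m(X)$ has attractor with $\dim_{\mathrm H}\Lambda<1$, so Theorems~\ref{thm:97} and \ref{thm:hausmeas} are applicable throughout $\mathcal{U}_{\beta,\rho}^m(X)$.

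Next I would apply Theorem~\ref{thm:98} to $\Theta_{\beta,\rho}^m(X)$: there is a meagre set $\mathcal{N}\subset\Theta_{\beta,\rho}^m(X)$ outside of which every IFS either satisfies the SSP or fails the WSP (the no-exact-overlap clause is not needed here, but comes for free). The intersection $\mathcal{N}\cap\mathcal{U}_{\beta,\rho}^m(X)$ is still meagre in the open set $\mathcal{U}_{\beta,\rho}^m(X)$, since $\mathcal{U}_{\beta,\rho}^m(X)$ is open and countable unions of nowhere dense sets restrict to countable unions of nowhere dense sets. For any $\mathcal{S}\in\mathcal{U}_{\beta,\rho}^m(X)\setminus\mathcal{N}$ which does not satisfy the SSP, failure of the WSP together with $\dim_{\mathrm H}\Lambda<1$ feed into Theorem~\ref{thm:97} and Theorem~\ref{thm:hausmeas} to yield $\dim_{\mathrm A}\Lambda=1$ and $\mathcal{H}^{\dim_{\mathrm H}\Lambda}(\Lambda)=0$ simultaneously, which is exactly the second alternative of the corollary.

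The main (and essentially the only) obstacle is the bookkeeping in the previous paragraph: continuity of the conformal dimension in the $\vertiii{\cdot}$-topology and the observation that the property required in Definition~\ref{ter:99} (being complementary to a set of first category in each $\Theta_{\beta,\rho}^m(X)$) automatically passes to every open subset such as $\mathcal{U}_{\beta,\rho}^m(X)$. Both facts are standard, so the corollary is a genuine one-line consequence of the three theorems cited.
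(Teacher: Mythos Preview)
Your proposal is correct and follows essentially the same approach as the paper, which simply states that the corollary follows ``immediately'' by combining Theorem~\ref{thm:98} with \cite[Theorem~4.1, Corollary~4.2]{KaenmakiEtal_SelfConf_20BLMS} (i.e.\ Theorems~\ref{thm:97} and~\ref{thm:hausmeas}). Your added bookkeeping---continuity of the conformal dimension to justify openness, the upper bound $\dim_{\mathrm H}\Lambda\le s_0$, and the fact that meagreness passes to open subsets---is exactly what the paper leaves implicit.
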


\begin{remark}
Even though the set of IFSs for which the WSP fails and there are no exact overlaps is topologically large, it is not easy to constract a concrete example of such an IFS.
\end{remark}

%%%%%%%%%%%%%%%%%%%%%%%%%%%%%%%%%%%%%%%%%%%%%%%%%%%%%%%%%%%%%%%%%%%%%%%%%%%
%%%%%%%%%%%%%%%%%%%%%%%%%%%%%%%%%%%%%%%%%%%%%%%%%%%%%%%%%%%%%%%%%%%%%%%%%%%
\subsection{Transversal families of conformal IFSs on the line}\label{sec:12}

Let $d \geq 1$ be an integer and $B \subset \mathbb{R}^d$ be a non-degenerate compact ball. For every $\underline{\lambda} \in B$ we are given an IFS $\mathcal{S}^{\underline{\lambda} }:=
\big\{S_{1}^{\underline{\lambda} }, \dots ,
S_{m}^{\underline{\lambda} }\big\}$ on $\mathbb{R}$. To show dependence on $\underline{\lambda}$, we denote the attractor of $\mathcal{S}^{\underline{\lambda}}$ by $\Lambda^{\underline{\lambda}}$.

\begin{definition}\label{def:97}
We say that $\big\{ \mathcal{S}^{\underline{\lambda} }\big\}_{\underline{\lambda} \in B} $ is a \texttt{family of  self-conformal IFSs on the line} if the following two conditions hold:
	\begin{description}
		\item[(a)] for every $\underline{\lambda} \in B$ we have $\mathcal{S}^{\underline{\lambda} }\in \Theta_{\beta,\rho}^m(X)$;
		\item[(b)] for every $\mathbf{i}\in\Sigma=\{1,\ldots,m\}^{\mathbb{N}}$ the following mapping is $\mathcal{C}^1:$
		\begin{equation}\label{eq:42}
		\underline{\lambda} \mapsto
		\Pi_{\underline{\lambda} }(\mathbf{i}):= \lim\limits_{n\to\infty} S_{i_1 \dots i_n}^{\underline{\lambda}}(x),\quad\underline{\lambda}\in B,
		\end{equation}
		where $x\in X$ is arbitrary.
	\end{description}
In particular, if $\mathcal{S}^{\underline{\lambda}}$ is a self-similar IFS for every $\underline{\lambda} \in B$, recall~\eqref{eq:19}, then we call it a \texttt{family of  self-similar IFSs on the line}.
\end{definition}

\begin{example}  [Translation family]\label{ex:98}
	Let $\mathcal{S}\in \Theta_{\beta,\rho}^m(X)$ and let $B \subset \mathbb{R}^m$ be a non-degenerate compact ball. Then the translation family of $\mathcal{S}$ is
	\begin{equation*}
	\mathcal{S}^{\underline{\lambda}} :=
	\left\{S_i(x)+\lambda_i\right\}_{i=1}^{m},\quad
	\underline{\lambda}=\{\lambda_1,\ldots,\lambda_m\} \in B.
	\end{equation*}
\end{example}

\begin{definition}[Transversal family of self-conformal IFSs on the line]\label{def:98}
	We say that a family of self-conformal IFSs $\{\mathcal{S}^{\underline{\lambda} } \}_{\underline{\lambda} \in B}$
	(in the sense of Definition~\ref{def:97})
	is a transversal family if the following \emph{transversality condition}
	holds:
	
	There exists an $\zeta>0$ such that for all $\mathbf{i},\mathbf{j}\in \Sigma$ with $i_1\ne j_1$,
	\begin{equation}\label{eq:15}
	\left|
	\Pi_{\underline{\lambda} }(\mathbf{i})-\Pi_{\underline{\lambda} }(\mathbf{j})
	\right|<\zeta \;\Longrightarrow\;
	\left\|
	\nabla_{\underline{\lambda} }\left(
	\Pi_{\underline{\lambda} }(\mathbf{i})-\Pi_{\underline{\lambda} }(\mathbf{j})
	\right)
	\right\|>\zeta,
	\end{equation}
	where $\nabla_{\underline{\lambda} }$ is the gradient in $\underline{\lambda} $.
\end{definition}

Let us introduce
\begin{align*}
\mathscr{SSP}&:= \{\underline{\lambda}\in B:\, \mathcal{S}^{\underline{\lambda}} \text{ satisfies the SSP}\}, \\
\mathscr{WSP}&:= \{\underline{\lambda}\in B:\, \mathcal{S}^{\underline{\lambda}} \text{ satisfies the WSP}\}, \\
\mathscr{EO}&:= \{\underline{\lambda}\in B:\, \mathcal{S}^{\underline{\lambda}} \text{ has an exact overlap}\}.
\end{align*}
The compliment of $\mathscr{SSP}$ is compact, because it is intersected with the compact set $B$ and $\mathscr{SSP}$ is open. Notice that $\mathscr{SSP}\cap \mathscr{EO}=\emptyset$. Let $\mathcal{L}_d(H)$ denote the $d$-dimensional Lebesgue measure of the subset $H\subset \mathbb{R}^d$. Our main result concerns the set of parameters $\big( (B\setminus \mathscr{SSP}) \cap \mathscr{WSP}\big) \cup \mathscr{EO}$, that is those parameters for which either we have exact overlap or we have overlaps and WSP.
We we prove that this set of parameters is a set of first category in the complete metric space $B\setminus \mathscr{SSP}$, which is the set of those parameters for which there are overlaps. Its proof is provided in  Section~\ref{sec:05}.

%We also use the following terminology. For a subset $H \subset \mathbb{R}^d$, we say that $H$ is a \texttt{very small set} if
%\begin{equation*}\label{eq:52}
%\mathcal{L}_d(H)=0 \;\mbox{ and }\; H
%\mbox{ is a set of first category},
%\end{equation*}
%where $\mathcal{L}_d(H)$ denotes the $d$-dimensional Lebesgue measure of $H$.

\begin{theorem}\label{thm:99}
Let $\{\mathcal{S}^{\underline{\lambda} }\}_{\underline{\lambda} \in B}$ be a transversal family of self-conformal IFSs on the line as in Definition \ref{def:98}. Then
\begin{equation*}
\big( (B\setminus \mathscr{SSP}) \cap \mathscr{WSP}\big) \cup \mathscr{EO} \mbox{ is a set of first category, moreover, } \mathcal{L}_d(\mathscr{EO})=0.
\end{equation*}
In addition, if $\{\mathcal{S}^{\underline{\lambda} }\}_{\underline{\lambda} \in B}$ is a self-similar family, then also $\mathcal{L}_d((B\setminus \mathscr{SSP}) \cap \mathscr{WSP})=0$.

\end{theorem}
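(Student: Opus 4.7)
I split the proof into three parts: (I) $\mathcal{L}_d(\mathscr{EO})=0$; (II) $\bigl((B\setminus\mathscr{SSP})\cap\mathscr{WSP}\bigr)\cup\mathscr{EO}$ is of first category in $B\setminus\mathscr{SSP}$; (III) the self-similar measure-zero refinement.

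\emph{Part (I).} Fix $\iiv\neq\jjv\in\Sigma^\ast$. Injectivity of each $S_i^{\underline{\lambda}}$ on the open neighbourhood $V\supset X$ lets me cancel any common initial segment and reduce to $i_1\neq j_1$ (one word cannot become empty since $\Lambda$ is not a singleton). An exact overlap implies in particular $\Pi_{\underline{\lambda}}(\iiv\mathbf{k})=\Pi_{\underline{\lambda}}(\jjv\mathbf{k})$ for $\mathbf{k}:=(1,1,\ldots)\in\Sigma$; since $\iiv\mathbf{k}$ and $\jjv\mathbf{k}$ differ in position one, transversality \eqref{eq:15} applied to the $\mathcal{C}^1$ function $\varphi_{\iiv,\jjv}(\underline{\lambda}):=\Pi_{\underline{\lambda}}(\iiv\mathbf{k})-\Pi_{\underline{\lambda}}(\jjv\mathbf{k})$ gives $\|\nabla_{\underline{\lambda}}\varphi_{\iiv,\jjv}\|>\zeta$ wherever $|\varphi_{\iiv,\jjv}|<\zeta$. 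By the implicit function theorem, $\{\varphi_{\iiv,\jjv}=0\}\cap B$ is a $\mathcal{C}^1$ hypersurface, hence closed, nowhere dense, and $\mathcal{L}_d$-null. A countable union over all eligible pairs gives $\mathcal{L}_d(\mathscr{EO})=0$ and first-categoricity of $\mathscr{EO}$.

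\emph{Part (II).} Decompose $(B\setminus\mathscr{SSP})\cap\mathscr{WSP}=\bigcup_{N\in\mathbb{N}} W_N$ with $W_N:=\{\underline{\lambda}\in B\setminus\mathscr{SSP}:\sup_{x,r}\#\Phi^{\underline{\lambda}}(x,r)\leq N\}$. The space $B\setminus\mathscr{SSP}$ is closed in $B$, hence complete metric; it suffices to show each $W_N$ is nowhere dense in it. Closedness of $W_N$: any $(N{+}1)$-tuple witnessing $\#\Phi^{\underline{\lambda}_0}(x,r)>N$ persists under small perturbation, since $\Lambda^{\underline{\lambda}}$ varies Hausdorff-continuously, $S_i^{\underline{\lambda}}$ depends $\mathcal{C}^{1+\alpha}$-continuously on $\underline{\lambda}$, and distinctness of two continuously varying maps is an open condition. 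For empty interior, fix $\underline{\lambda}_0\in W_N$ and an overlap pair $\iiv_0\neq\jjv_0$ with (after cancellation) $i_1\neq j_1$. Choose a point $y$ in $S^{\underline{\lambda}_0}_{\iiv_0}(\Lambda)\cap S^{\underline{\lambda}_0}_{\jjv_0}(\Lambda)$, write $y=S^{\underline{\lambda}_0}_{\iiv_0}(u)=S^{\underline{\lambda}_0}_{\jjv_0}(v)$, and select finite words $\mathbf{w}_1,\ldots,\mathbf{w}_{N+1}$ (respectively $\mathbf{w}'_1,\ldots,\mathbf{w}'_{N+1}$) of a common large length $n$ such that $S^{\underline{\lambda}_0}_{\mathbf{w}_k}(\Lambda)\ni u$ up to scale $\rho^n$ (resp.\ $S^{\underline{\lambda}_0}_{\mathbf{w}'_k}(\Lambda)\ni v$ up to scale $\rho^n$). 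This produces $2(N{+}1)$ compositions whose cylinders cluster at $y$ at scale $\asymp\rho^n$, and clustering is preserved under small perturbations. By Part (I) applied to every pair among these finitely many compositions, the set of $\underline{\lambda}$ on which any two of them coincide on $\Lambda^{\underline{\lambda}}$ is a finite union of $\mathcal{C}^1$ hypersurfaces, hence nowhere dense. Any $\underline{\lambda}$ off this union (arbitrarily close to $\underline{\lambda}_0$) realises $\#\Phi^{\underline{\lambda}}(x,r)\geq N{+}1$ for an appropriate $(x,r)$, so $\underline{\lambda}\notin W_N$; and since at least two distinct cylinders then overlap, also $\underline{\lambda}\in B\setminus\mathscr{SSP}$. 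Together with Part (I) this completes (II).

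\emph{Main obstacle.} The crux is arranging the $2(N{+}1)$ concatenated compositions so that their cylinders are \emph{simultaneously} clustered at one ball $B(x,r)$ at a common scale bracket. Transversality is only a pairwise statement, so one must combine finitely many exact-overlap hypersurfaces using a Baire argument, and one must verify that the scale brackets $[\mathrm{diam}(S_{\iiv\mathbf{w}_k}(\Lambda)),\mathrm{diam}(S_{(\iiv\mathbf{w}_k)^-}(\Lambda))]$ have a common intersection robust to perturbation.

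\emph{Part (III).} In the self-similar case, $S_\iiv^{\underline{\lambda}}\circ(S_\jjv^{\underline{\lambda}})^{-1}$ is affine with ratio $r_\iiv/r_\jjv$ independent of $\underline{\lambda}$ and translation depending $\mathcal{C}^1$-ly on $\underline{\lambda}$. Zerner's characterisation says that WSP is equivalent to the identity being isolated in the family of such compositions with $S_\iiv|_\Lambda\neq S_\jjv|_\Lambda$. Iterating an overlap pair $\iiv_0,\jjv_0$ inside itself produces compositions whose ratios form a multiplicative semigroup and whose translations populate a dense set; for WSP to persist on $B\setminus\mathscr{SSP}$, arbitrarily many of these compositions must actually agree as maps on $\Lambda^{\underline{\lambda}}$, forcing $\underline{\lambda}$ to satisfy a countable family of exact-overlap equations. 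By Part (I) each such equation cuts out a $\mathcal{C}^1$ hypersurface, so $(B\setminus\mathscr{SSP})\cap\mathscr{WSP}$ lies in a countable union of $\mathcal{L}_d$-null hypersurfaces, yielding the self-similar claim.
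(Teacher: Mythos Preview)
Part~(I) is essentially the paper's argument and is fine. The serious gap is in Part~(II). Your plan is to take $\underline{\lambda}_0\in W_N$ and manufacture $N{+}1$ (or $2(N{+}1)$) words whose cylinders all lie in one ball $B(x,r)$ at a common scale bracket. But if $\underline{\lambda}_0$ genuinely satisfies WSP with bound $N$, then \emph{by definition} no such configuration exists at $\underline{\lambda}_0$: for any $u\in\Lambda^{\underline{\lambda}_0}$ and any scale there are at most $N$ distinct level cylinders through $B(u,r)$. A single top-level overlap $S_{\iiv_0}(\Lambda)\cap S_{\jjv_0}(\Lambda)\neq\emptyset$ can at best double this number; it does not give you $N{+}1$ for arbitrary $N$. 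Perturbation cannot create the configuration either, since small perturbations move cylinders continuously. The ``main obstacle'' you flag is therefore not a technicality but the heart of the matter, and your sketch does not resolve it. The paper takes a completely different route: using transversality one finds $\underline{\lambda}^\ast$ near $\underline{\lambda}_0$ at which two \emph{distinct} finite words $\pmb{\omega},\pmb{\tau}$ (with $\omega_1\neq\tau_1$, $\omega_{|\pmb{\omega}|}\neq\tau_{|\pmb{\tau}|}$) share a common fixed point $\widetilde{x}$. If $\log|S'_{\pmb{\omega}}(\widetilde{x})|/\log|S'_{\pmb{\tau}}(\widetilde{x})|\notin\mathbb{Q}$, a Dirichlet-approximation argument produces arbitrarily many words $h_r$ with $h_r(\widetilde{x})=\widetilde{x}$ and nearly equal contraction, forcing $\#\Phi_N(\widetilde{x},\eta)\to\infty$ and hence $\underline{\lambda}^\ast\notin\mathscr{WSP}$. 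If the ratio is rational, a separate lemma (two maps with the same fixed point and the same derivative there, plus a third map with a different fixed point, cannot satisfy OSC) gives $\underline{\lambda}^\ast\notin\mathscr{OSC}$. In both cases one exits $\mathscr{WSP}\setminus\mathscr{EO}=\mathscr{OSC}$. The $G_\delta$ part is handled not via your $W_N$'s but by writing $B\setminus\mathscr{OSC}=\bigcap_k\{\mathcal{H}^{s_0(\underline{\lambda})}(\Lambda^{\underline{\lambda}})<1/k\}$, each set being open.

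Part~(III) also fails. Your claim ``for WSP to persist \ldots arbitrarily many of these compositions must actually agree as maps on $\Lambda^{\underline{\lambda}}$'' is false: an IFS satisfying OSC has WSP with \emph{no} exact overlaps at all, so WSP does not force any exact-overlap equations. The paper's argument is quantitative rather than structural: for $\underline{\lambda}_0\in B\setminus\mathscr{SSP}$ and every $\varepsilon>0$ one exhibits, inside every small ball $B(\underline{\lambda}_0,\eta_k)$, a concentric ball of radius $\asymp\varepsilon\,\eta_k$ entirely contained in the ``almost-overlap'' set $\mathcal{V}_\varepsilon$ (pairs $\iiv\neq\jjv$ with $r_{\iiv}/r_{\jjv}\in(e^{-\varepsilon},e^{\varepsilon})$ and $|\Pi_{\underline{\lambda}}(\iiv\mathbf{1})-\Pi_{\underline{\lambda}}(\jjv\mathbf{1})|<\varepsilon r_{\jjv}$). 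Since $B\setminus\mathscr{OSC}=\bigcap_n\mathcal{V}_{1/n}$, this shows $B\setminus(\mathscr{SSP}\cup\mathcal{V}_\varepsilon)$ has no Lebesgue density point, whence $\mathcal{L}_d\bigl((B\setminus\mathscr{SSP})\cap\mathscr{OSC}\bigr)=0$.
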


\begin{remark}\
 The $G_\delta$ property does not follow from the more general Theorem~\ref{thm:98}, because it can be checked that $\{\mathcal{S}^{\underline{\lambda}}:\, \underline{\lambda} \in B\}$ as a subset of $\Theta_{\beta,\rho}^m$ is nowhere dense.

\end{remark}
%%%%%%%%%%%%%%%%%%%%%%%%%%%%%%%%%%%%%%%%%%%%%%%%%%%%%%%%%%%%%%%%%%%%%%%%%%%
%%%%%%%%%%%%%%%%%%%%%%%%%%%%%%%%%%%%%%%%%%%%%%%%%%%%%%%%%%%%%%%%%%%%%%%%%%%
\subsection{Application to translation families}\label{sec:96}

Recall the translation family in Example~\ref{ex:98}. The following lemma gives a sufficient condition for a translation family to satisfy the transversality condition~\eqref{eq:15}. We are unaware of a reference for it, so we provide the short proof.
\begin{lemma}\label{lemma:01}
Let $\{\mathcal{S}^{\underline{\lambda} }\}_{\underline{\lambda} \in B}$ be a translation family of the self-conformal IFS $\mathcal{S}\in \Theta_{\beta,\rho}^m(X)$, moreover, denote $\rho_i^{\ast}:= \sup_{x\in X} |S'_{i}(x)|$. If
\begin{equation}\label{eq:20}
\rho_i^{\ast} + \rho_j^{\ast} < 1 \quad \text{for all } i\neq j,
\end{equation}
then for every $\ii,\jj\in\Sigma$ such that $i_1\neq j_1$ there exists $p\in\{i_1,j_1\}$ for which
\begin{equation*}
\Big| \frac{\partial}{\partial \lambda_{p}} \big(\Pi_{\underline{\lambda} }(\mathbf{i})-\Pi_{\underline{\lambda} }(\mathbf{j})\big) \Big| \geq 1- \rho_{i_1}^{\ast} - \rho_{j_1}^{\ast} >0,
\end{equation*}
recall \eqref{eq:42}. Thus, $\{\mathcal{S}^{\underline{\lambda} }\}_{\underline{\lambda} \in B}$ is a transversal family and Theorem~\ref{thm:99} applies.
\end{lemma}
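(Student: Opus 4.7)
The plan is to differentiate the coding map of the translation family directly and then bound the resulting series using the contraction assumption. From the functional equation $\Pi_{\underline\lambda}(\mathbf{i}) = S_{i_1}(\Pi_{\underline\lambda}(\sigma\mathbf{i})) + \lambda_{i_1}$ (where $\sigma$ denotes the shift) and the $\mathcal{C}^1$ dependence granted by Definition~\ref{def:97}(b), I would differentiate implicitly in $\lambda_p$ to obtain
\[
\partial_{\lambda_p}\Pi_{\underline\lambda}(\mathbf{i}) = \delta_{i_1,p} + S'_{i_1}\bigl(\Pi_{\underline\lambda}(\sigma\mathbf{i})\bigr)\,\partial_{\lambda_p}\Pi_{\underline\lambda}(\sigma\mathbf{i}),
\]
which unrolls to the absolutely convergent series
\[
\partial_{\lambda_p}\Pi_{\underline\lambda}(\mathbf{i}) = \sum_{n\geq 0,\,i_{n+1}=p}\;\prod_{j=1}^{n} S'_{i_j}\bigl(\Pi_{\underline\lambda}(\sigma^j\mathbf{i})\bigr).
\]
Applied to $\varphi := \Pi_{\underline\lambda}(\mathbf{i})-\Pi_{\underline\lambda}(\mathbf{j})$ under $i_1 \neq j_1$, the $n=0$ terms contribute a leading $+1$ to $\partial_{\lambda_{i_1}}\varphi$ and a leading $-1$ to $\partial_{\lambda_{j_1}}\varphi$, so
\[
\partial_{\lambda_{i_1}}\varphi = 1 + S'_{i_1}\bigl(\Pi(\sigma\mathbf{i})\bigr)\,\partial_{\lambda_{i_1}}\Pi(\sigma\mathbf{i}) - S'_{j_1}\bigl(\Pi(\sigma\mathbf{j})\bigr)\,\partial_{\lambda_{i_1}}\Pi(\sigma\mathbf{j}),
\]
with a symmetric expression for $\partial_{\lambda_{j_1}}\varphi$.

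By the reverse triangle inequality, establishing the claim reduces to a tail estimate. Iterating the recursion and grouping terms by the successive occurrences of $p$ in $\mathbf{k}$ would give $|\partial_{\lambda_p}\Pi_{\underline\lambda}(\mathbf{k})| \leq 1/(1-\rho^{\ast}_p)$, refined to $\rho^{\ast}_{k_1}/(1-\rho^{\ast}_p)$ when $k_1 \neq p$; under the hypothesis $\rho^{\ast}_i+\rho^{\ast}_j<1$, the refined quantity is strictly less than $1$. Combined with $|S'_{i_1}|\leq\rho^{\ast}_{i_1}$ and $|S'_{j_1}|\leq\rho^{\ast}_{j_1}$, this produces lower bounds on both $|\partial_{\lambda_{i_1}}\varphi|$ and $|\partial_{\lambda_{j_1}}\varphi|$ whose quality depends on whether $\mathbf{i},\mathbf{j}$ start with repeated $i_1$'s or $j_1$'s.

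The hard part is exactly the case when they do, because then a single-index estimate loses a factor $1/(1-\rho^{\ast}_p)$ in the tail. To recover the clean constant $1-\rho^{\ast}_{i_1}-\rho^{\ast}_{j_1}$ I would exploit both derivatives simultaneously: the explicit difference
\[
\partial_{\lambda_{i_1}}\varphi - \partial_{\lambda_{j_1}}\varphi = 2 + S'_{i_1}(\cdot)\,h(\sigma\mathbf{i}) - S'_{j_1}(\cdot)\,h(\sigma\mathbf{j}),
\]
with $h := \partial_{\lambda_{i_1}}\Pi - \partial_{\lambda_{j_1}}\Pi$, enjoys the improved bound $|h(\mathbf{k})|\leq 1/(1-\max(\rho^{\ast}_{i_1},\rho^{\ast}_{j_1}))$, since the nonzero Kronecker contributions in the recursion for $h$ occur only when the leading symbol lies in $\{i_1,j_1\}$, so the telescoping is governed only by these two contractions. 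The triangle inequality applied to the above display then forces the larger of $|\partial_{\lambda_{i_1}}\varphi|,|\partial_{\lambda_{j_1}}\varphi|$ to exceed $1-\rho^{\ast}_{i_1}-\rho^{\ast}_{j_1}$; a short case split based on whether $\max(\rho^{\ast}_{i_1},\rho^{\ast}_{j_1})\leq 1/2$ (using the direct single-index bound for the smaller index in the complementary regime, where the hypothesis $\rho^{\ast}_i+\rho^{\ast}_j<1$ forces one of them below $1/2$) covers the full parameter range. Once this pointwise lower bound is in hand, the transversality condition~\eqref{eq:15} holds with any $\zeta\in(0,\min_{i\neq j}(1-\rho^{\ast}_i-\rho^{\ast}_j))$, and Theorem~\ref{thm:99} applies immediately to the translation family.
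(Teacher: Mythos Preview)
Your setup is correct and matches the paper: the series expansion for $\partial_{\lambda_p}\Pi_{\underline\lambda}$, the decomposition $\partial_{\lambda_{i_1}}\varphi=1+E_1$, $\partial_{\lambda_{j_1}}\varphi=-1+E_2$, and the bound $|h(\mathbf{k})|\leq 1/(1-\max(\rho^\ast_{i_1},\rho^\ast_{j_1}))$ are all fine. The difference argument does yield $\max\{|\partial_{\lambda_{i_1}}\varphi|,|\partial_{\lambda_{j_1}}\varphi|\}\geq 1-\rho^\ast_{i_1}-\rho^\ast_{j_1}$ when $\max(\rho^\ast_{i_1},\rho^\ast_{j_1})\leq 1/2$, since then $|h|\leq 2$ and $|\partial_{\lambda_{i_1}}\varphi-\partial_{\lambda_{j_1}}\varphi|\geq 2-2(\rho^\ast_{i_1}+\rho^\ast_{j_1})$.

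The gap is in the complementary regime. Write $\alpha=\rho^\ast_{i_1}>1/2>\rho^\ast_{j_1}=\beta$. Your ``direct single-index bound for the smaller index'' gives only
\[
|\partial_{\lambda_{j_1}}\varphi|\;\geq\;1-\alpha\cdot\frac{1}{1-\beta}-\beta\cdot\frac{1}{1-\beta}\;=\;1-\frac{\alpha+\beta}{1-\beta},
\]
which is \emph{strictly smaller} than $1-\alpha-\beta$ for every $\beta>0$, and can be negative (e.g.\ $\alpha=0.6$, $\beta=0.35$). The refined bound $\rho^\ast_{k_1}/(1-\rho^\ast_p)$ does not rescue this, since you have no control over the second symbols $i_2,j_2$. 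The difference route is also dead here: the bound $|h|\leq 1/(1-\alpha)$ is sharp (attained at $\mathbf{k}=i_1^\infty$), so $2-(\alpha+\beta)/(1-\alpha)$ can be far below zero. Thus the case split does not close, and the proposal as written does not establish the stated constant.

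The paper sidesteps the dichotomy with a different device: rather than bounding $|E_1|,|E_2|$ separately or via their difference, it bounds the \emph{weighted sum}
\[
(1-\rho^\ast_{i_1})|E_1|+(1-\rho^\ast_{j_1})|E_2|\;\leq\;\rho^\ast_{i_1}+\rho^\ast_{j_1}.
\]
This follows by inserting the weights $(1-\rho^\ast_{i_\ell})$, $(1-\rho^\ast_{j_\ell})$ term by term into the series and recognising telescoping products $\sum_{\ell\geq 2}(1-\rho^\ast_{i_\ell})\rho^\ast_{i_1}\cdots\rho^\ast_{i_{\ell-1}}=\rho^\ast_{i_1}$ (and the analogue for $\mathbf{j}$). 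Dividing through by $2-\rho^\ast_{i_1}-\rho^\ast_{j_1}$ turns the left side into a convex combination of $|E_1|$ and $|E_2|$ bounded by $\rho^\ast_{i_1}+\rho^\ast_{j_1}$, whence $\min\{|E_1|,|E_2|\}\leq \rho^\ast_{i_1}+\rho^\ast_{j_1}$ and the corresponding $p$ works. This weighted--telescoping step is the missing idea; once you have it, no case analysis on the size of $\max(\rho^\ast_{i_1},\rho^\ast_{j_1})$ is needed.
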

\begin{proof}
Without loss of generality we may assume that $\ii,\jj\in\Sigma$ such that $i_1=1$ and $j_1=2$. Let $\sigma: \Sigma\to\Sigma$ denote the left shift operator, $\delta_{k,\ell}$ is the Dirac delta and for fixed $\ii\in\Sigma$ and $z\in\{1,\ldots,m\}$ we write
\begin{equation*}
\mathcal{I}_z(\ii):=\{\ell>1:\; i_{\ell}=z\}.
\end{equation*}
Using \eqref{eq:42}, the partial derivative $\frac{\partial }{\partial \lambda_{z}}\Pi_{\underline{\lambda} }(\mathbf{i})$ is equal to
\begin{equation*}
\delta_{i_1,z} + S'_{i_1}(\Pi_{\underline{\lambda} }(\sigma\mathbf{i})) \Big( \delta_{i_2,z} + S'_{i_2}(\Pi_{\underline{\lambda} }(\sigma^2\mathbf{i}))\big( \delta_{i_3,z}+\cdots \big) \Big) = \delta_{i_1,z} +\sum_{\ell\in\mathcal{I}_z(\ii)} \prod_{k=1}^{\ell-1} S'_{i_k}(\Pi_{\underline{\lambda} }(\sigma^k\mathbf{i})).
\end{equation*}
For brevity, let $\rho_{\ii|\ell-1}:= \prod_{k=1}^{\ell-1} S'_{i_k}(\Pi_{\underline{\lambda} }(\sigma^k\mathbf{i})).$ Then, we can write
\begin{equation}\label{eq:11}
\frac{\partial}{\partial \lambda_{1}} \big(\Pi_{\underline{\lambda} }(\mathbf{i})-\Pi_{\underline{\lambda} }(\mathbf{j})\big) = 1 + E_1(\ii,\jj) \;\text{ and }\; \frac{\partial}{\partial \lambda_{2}} \big(\Pi_{\underline{\lambda} }(\mathbf{i})-\Pi_{\underline{\lambda} }(\mathbf{j})\big) = -1 + E_2(\ii,\jj),
\end{equation}
where
\begin{equation*}
E_z=E_z(\ii,\jj) = \sum_{\ell\in\mathcal{I}_z(\ii)} \rho_{\ii|\ell-1} - \sum_{\ell\in\mathcal{I}_z(\jj)} \rho_{\jj|\ell-1} \text{ for } z=1,2.
\end{equation*}
We take a convex linear combination of $|E_1|$ and $|E_2|$, for which we claim that
\begin{equation}\label{eq01}
\frac{(1-\rho_1^{\ast})|E_1|}{2-\rho_1^{\ast}-\rho_2^{\ast}} + \frac{(1-\rho_2^{\ast})|E_2|}{2-\rho_1^{\ast}-\rho_2^{\ast}} < (1-\rho_1^{\ast})|E_1|+(1-\rho_2^{\ast})|E_2| \leq \rho_1^{\ast}+\rho_2^{\ast} < 1.
\end{equation}
Indeed, the first and the last inequalities hold due to assumption~\eqref{eq:20}. The second inequality follows from union bounds
\begin{align*}
(1-\rho_1^{\ast})|E_1|+(1-\rho_2^{\ast})|E_2| &\leq  \Bigg| \sum_{\ell\in\mathcal{I}_1(\ii)} (1-\rho_{i_{\ell}}^{\ast})\rho_{\ii|\ell-1} \Bigg| + \Bigg| \sum_{\ell\in\mathcal{I}_1(\jj)} (1-\rho_{j_{\ell}}^{\ast})\rho_{\jj|\ell-1} \Bigg| \\
&+ \Bigg| \sum_{\ell\in\mathcal{I}_2(\ii)} (1-\rho_{i_{\ell}}^{\ast})\rho_{\ii|\ell-1} \Bigg| + \Bigg| \sum_{\ell\in\mathcal{I}_2(\jj)} (1-\rho_{j_{\ell}}^{\ast})\rho_{\jj|\ell-1} \Bigg| \\
&\leq \sum_{\ell=2}^{\infty} (1-\rho_{i_{\ell}}^{\ast})\rho_{i_1}^{\ast}\ldots \rho_{i_{\ell-1}}^{\ast} + \sum_{\ell=2}^{\infty} (1-\rho_{j_{\ell}}^{\ast})\rho_{j_1}^{\ast}\ldots \rho_{j_{\ell-1}}^{\ast} \\
&= \rho_{i_1}^{\ast} + \rho_{j_1}^{\ast} = \rho_{1}^{\ast} + \rho_{2}^{\ast}. \qquad(\text{telescopic sum})
\end{align*}

Choose $p\in\{1,2\}$ for which $|E_p|=\min\{|E_1|,|E_2|\}$. From the convex linear combination \eqref{eq01} it follows that $|E_p|\leq\rho_1^{\ast}+\rho_2^{\ast}<1$. This and~\eqref{eq:11} implies that for this choice of $p$ the assertion holds.
\end{proof}

In the self-similar setting, recall~\eqref{eq:19}, the translation family of a self-similar IFS $\mathcal{S}$ is of the form
\begin{equation}\label{eq:21}
\mathcal{S}^{\underline{\lambda}} :=
\left\{r_ix+t_i+\lambda_i\right\}_{i=1}^{m},\quad
\underline{\lambda} \in B\subset \mathbb{R}^m.
\end{equation}
The similarity dimension of $\mathcal{S}$ is defined as the unique solution of the equation $|r_{1}|^{s_0}+\cdots+|r_{m}|^{s_0}=1$. The condition in Lemma~\ref{lemma:01} simply becomes
\begin{equation*}
\max\limits_{i\ne j} \left\{|r_i|+|r_j|\right\}<1.
\end{equation*}
Combining this with the result of Simon and Solomyak~\cite[Theorem 2.1.]{SimonSolomyak_2002Fractals}, we get the following characterization. For simplicity, we refer to a set $H \subset \mathbb{R}^d$ as a \texttt{very small set} if it is of first category and $\mathcal{L}_d(H)=0$.

\begin{corollary}\label{cor:99}
Let $\{\mathcal{S}^{\underline{\lambda}}\}_{\underline{\lambda}\in B}$ be a translation family \eqref{eq:21} of the self-similar IFS $\mathcal{S}\in \Theta_{\beta,\rho}^m(X)$ with similarity dimension $s_0$.
Then apart from a very small set of $\underline{\lambda}$
\begin{enumerate}
\item if $s_0>1$, then $\mathcal{L}(\Lambda^{\underline{\lambda}})>0$. In particular, $\dim_{\mathrm H}\Lambda^{\underline{\lambda}}=\dim_{\mathrm A}\Lambda^{\underline{\lambda}}=1$;
\item if $s_0\leq 1$, then
\begin{enumerate}
\item either $\underline{\lambda}\in \mathscr{SSP}$, thus $\dim_{\mathrm H}\Lambda^{\underline{\lambda}}=\dim_{\mathrm A}\Lambda^{\underline{\lambda}}=s_0$ and $\mathcal{H}^{s_0}(\Lambda^{\underline{\lambda}})>0$;
\item or $\mathcal{H}^{\dim_{\mathrm H}\Lambda^{\underline{\lambda}}}(\Lambda^{\underline{\lambda}})=0$ and $\dim_{\mathrm A}\Lambda^{\underline{\lambda}}=1$.
\end{enumerate}
\end{enumerate}
\end{corollary}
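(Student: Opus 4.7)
The plan is to combine Lemma~\ref{lemma:01}, the self-similar strengthening of Theorem~\ref{thm:99}, the Simon--Solomyak transversality theorem, and the WSP characterisations in Theorems~\ref{thm:97} and~\ref{thm:hausmeas}. The first step is the transversality check: for a similarity $S_i(x)=r_ix+t_i$ one has $\rho_i^{\ast}=|r_i|$, so the hypothesis $\max_{i\ne j}\{|r_i|+|r_j|\}<1$ is exactly condition~\eqref{eq:20}. Lemma~\ref{lemma:01} then yields transversality of the translation family \eqref{eq:21}, so Theorem~\ref{thm:99} applies and produces a very small set
\[
E:=\big((B\setminus\mathscr{SSP})\cap\mathscr{WSP}\big)\cup\mathscr{EO}.
\]
By construction, every $\underline{\lambda}\notin E$ either lies in $\mathscr{SSP}$, or simultaneously fails the WSP and carries no exact overlap.

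For case (1), $s_0>1$, I would first rule out the SSP entirely: if some $\underline{\lambda}\in\mathscr{SSP}$ then $\dim_{\mathrm H}\Lambda^{\underline{\lambda}}=s_0>1$, contradicting $\Lambda^{\underline{\lambda}}\subset\mathbb{R}$, so $\mathscr{SSP}=\emptyset$. Therefore for every $\underline{\lambda}\notin E$ the WSP fails, and Theorem~\ref{thm:97} gives $\dim_{\mathrm A}\Lambda^{\underline{\lambda}}=1$. The positivity $\mathcal{L}(\Lambda^{\underline{\lambda}})>0$ for Lebesgue-a.e.\ $\underline{\lambda}$ (which forces $\dim_{\mathrm H}\Lambda^{\underline{\lambda}}=1$) is precisely the content of \cite[Theorem 2.1]{SimonSolomyak_2002Fractals}, and throwing that null set into the exceptional set finishes~(1).

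For case (2), $s_0\le 1$, I would fix $\underline{\lambda}\notin E$ and split on the description of $E$. If $\underline{\lambda}\in\mathscr{SSP}$, the classical self-similar theory under the SSP delivers $\dim_{\mathrm H}\Lambda^{\underline{\lambda}}=\dim_{\mathrm A}\Lambda^{\underline{\lambda}}=s_0$ and $\mathcal{H}^{s_0}(\Lambda^{\underline{\lambda}})>0$, giving~(2a). Otherwise $\underline{\lambda}\in B\setminus\mathscr{SSP}$ with $\underline{\lambda}\notin\mathscr{WSP}\cup\mathscr{EO}$, and Theorems~\ref{thm:97}--\ref{thm:hausmeas} at once give $\dim_{\mathrm A}\Lambda^{\underline{\lambda}}=1$ and $\mathcal{H}^{\dim_{\mathrm H}\Lambda^{\underline{\lambda}}}(\Lambda^{\underline{\lambda}})=0$, which is~(2b).

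The main obstacle I anticipate is bookkeeping in case~(1) to ensure that the overall exceptional set is genuinely \emph{very small}, i.e.\ meagre \emph{and} Lebesgue-null: the meagre part is supplied by $E$ via Theorem~\ref{thm:99}, and the null part by the Simon--Solomyak theorem, so one must take their union and verify that this union is still both meagre and null. This is straightforward once each contribution has been identified as very small, but is the only non-mechanical point in the argument; every other step is an immediate invocation of a result already established in the paper.
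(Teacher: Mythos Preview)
Your argument has a genuine gap: you treat $\max_{i\ne j}\{|r_i|+|r_j|\}<1$ as a hypothesis of the corollary, but it is not. The statement assumes only that $\mathcal{S}\in\Theta_{\beta,\rho}^m(X)$, so you must also handle the regime $\max_{i\ne j}\{|r_i|+|r_j|\}\geq 1$. In that regime Lemma~\ref{lemma:01} does not apply, the family need not be transversal, and your appeal to Theorem~\ref{thm:99} (hence the definition of $E$) is unjustified. Note, however, that $\max_{i\ne j}\{|r_i|+|r_j|\}\geq 1$ forces $s_0>1$ (since $s_0\le 1$ would give $|r_i|+|r_j|\le |r_i|^{s_0}+|r_j|^{s_0}<1$), so the gap only affects case~(1).

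The paper's proof closes this gap by splitting primarily on the quantity $\max_{i\ne j}\{|r_i|+|r_j|\}$ rather than on $s_0$. When $\max_{i\ne j}\{|r_i|+|r_j|\}\geq 1$ it invokes \cite[Theorem~2.1(a)]{SimonSolomyak_2002Fractals}, which does not need transversality and gives directly that $\Lambda^{\underline{\lambda}}$ contains an interval apart from a very small set of $\underline{\lambda}$; this already delivers all of~(1). When $\max_{i\ne j}\{|r_i|+|r_j|\}<1$ the paper proceeds essentially as you do: Lemma~\ref{lemma:01} gives transversality, then \cite[Theorem~2.1(c)]{SimonSolomyak_2002Fractals} handles $s_0>1$, while Theorem~\ref{thm:99} together with \cite{KaenmakiEtal_SelfConf_20BLMS} handles $s_0\le 1$. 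Your treatment of case~(2) is correct once you observe (as above) that $s_0\le 1$ automatically implies the transversality hypothesis of Lemma~\ref{lemma:01}; you should make that deduction explicit rather than calling it a ``hypothesis''.
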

\begin{proof}
If $\max_{i\ne j} \left\{|r_i|+|r_j|\right\}\geq 1$, then $s_0>1$. Moreover, excluding a very small set \cite[Theorem 2.1.(a)]{SimonSolomyak_2002Fractals} states that $\Lambda^{\underline{\lambda}}$ contains an interval.

If $\max_{i\ne j} \left\{|r_i|+|r_j|\right\}< 1$  and $s_0>1$, then \cite[Theorem 2.1.(c)]{SimonSolomyak_2002Fractals} states that excluding a very small set $\mathcal{L}(\Lambda^{\underline{\lambda}})>0$.

If $\max_{i\ne j} \left\{|r_i|+|r_j|\right\}< 1$ and $s_0\leq 1$, then Lemma~\ref{lemma:01} implies that $\{\mathcal{S}^{\underline{\lambda} }\}_{\underline{\lambda} \in B}$ is a transversal family and point $(2)$ follows from Theorem~\ref{thm:99} and \cite{KaenmakiEtal_SelfConf_20BLMS}.
\end{proof}

%%%%%%%%%%%%%%%%%%%%%%%%%%%%%%%%%%%%%%%%%%%%%%%%%%%%%%%%%%%%%%%%%%%%%%%%%%%%%%
\section{Preliminaries}\label{sec:97}

Here we summarize the symbolic notation we use, moreover, we recall the bounded distortion property and equivalent characterizations of the Open Set Condition.

%%%%%%%%%%%%%%%%%%%%%%%%%%%%%%%%%%%%%%%%%%%%%%%%%%%%%%%%%%%%%%%%%%%%%%%%%%%%%%
\subsection{Symbolic notation}\label{sec:99}
An element of the symbolic space $\Sigma=\{1,\ldots,m\}^\N$ is an infinite sequence denoted by $\ii=i_1i_2\ldots$. The left shift operator $\sigma$ on $\Sigma$ maps $\ii=i_1i_2\ldots$ to $\sigma\ii=i_2i_3\ldots$. The set of all finite length words is denoted by $\Sigma^\ast:=\bigcup_{n=1}^{\infty }\left\{1, \dots ,m\right\}^{n}$. Elements of $\Sigma^\ast$ are either written as truncations of infinite length words $\ii|n=i_1\ldots i_n$ or as $\iiv=i_1\ldots i_n$. For compositions of maps we use the standard notation $S_{\iiv}=S_{i_1}\circ\ldots\circ S_{i_n}$. For a finite word $\iiv=i_1\ldots i_n\in\Sigma^*$, denote $\iiv^{\infty}$ the infinite sequence for which $\sigma^n\iiv^{\infty}=\iiv^{\infty}$.

Given an IFS $\mathcal{S}=\left\{S_i\right\}_{i=1}^{m}$, the natural projection $\Pi:\Sigma\to\Lambda$ is defined by
\begin{equation*}
\Pi(\ii) :=\lim\limits_{n\to\infty}
S_{\mathbf{i}|n}(x),
\end{equation*}
where $x\in X$ is arbitrary. The Strong Separation Property holds if and only if $\Pi$ is a one-to-one coding. The function $\ii\mapsto\Pi(\ii)$ from $\Sigma$ to $X$ is continuous. When necessary, to show dependence of $\Pi$ on the IFS $\mathcal{S}$, we write $\Pi_{\mathcal{S}}$ or $\Pi_{\underline{\lambda} }:\Sigma\to \Lambda^{\underline{\lambda} }$ when we work with a parameterized family of IFSs $\{\mathcal{S}^{\underline{\lambda} }\}_{\underline{\lambda} \in B}$.

A self-conformal IFS $\mathcal{S}$ satisfies the \texttt{Bounded Distortion Property}: there exists a uniform constant $C_0>0$ such that
\begin{equation}\label{eq:96}
C_0^{-1}<\frac{|S'_{\iiv}(x)|}{|S'_{\iiv}(y)|}<C_0 \;\text{ for every } \iiv\in\Sigma^{\ast} \text{ and } x,y\in X.
\end{equation}

%%%%%%%%%%%%%%%%%%%%%%%%%%%%%%%%%%%%%%%%%%%%%%%%%%%%%%%%%%%%%%%%%%%%%%%%%%%%%%
\subsection{Equivalent characterizations of the Open Set Condition}\label{sec:98}

Recall the separation conditions we introduced in Section~\ref{sec:00}. In the proofs, we will use one more condition.

An IFS $\mathcal{S}$ satisfies the \texttt{Open Set Condition (OSC)} if there is a non-empty open set $U$ such that $S_i(U)\subseteq U$ and $S_i(U)\cap S_j(U)=\emptyset$ for distinct $i,j\in\{1,\ldots,m\}$.

In order to give equivalent characterizations of the OSC, we define the conformal dimension of a self-conformal IFS $\mathcal{S}$ and its attractor $\Lambda$. For $s>0$, the pressure function $P(s)$ is defined by
\begin{equation*}
P(s) = \lim_{n\to\infty} \frac{1}{n} \log \sum_{\iiv\in\Sigma^n} \|S_{\iiv}'\|^s,
\end{equation*}
where $\|\cdot\|$ denotes the supremum norm of a function.

The \texttt{conformal dimension} of $\Lambda$ is the unique solution $s_0$ of the Bowen equation $P(s_0)=0$. The bounded distortion property~\eqref{eq:96} and the mean value theorem imply that there exists a uniform constant $C_1>0$ such that for all $\iiv\in\Sigma^{\ast}$ we have
\begin{equation*}\label{cv88}
C_1^{-1} < \frac{\|S_{\iiv}'\|}{\mathrm{diam}(S_{\iiv}(\Lambda)) } < C_1.
\end{equation*}
Hence, $s_0$ is always an upper bound for $\dim_{\mathrm H}\Lambda$. Moreover, $\mathcal{H}^{s_0}(\Lambda)<\infty$. If $\mathcal{S}$ is a self-similar IFS, then the Bowen equation simply becomes
\begin{equation*}
|r_{1}|^{s_0}+\cdots+|r_{m}|^{s_0}=1.
\end{equation*}
In this case, $s_0$ is called the similarity dimension.

Peres, Rams, Simon and Solomyak \cite[Theorem 1.1]{peres2001equivalence} showed the following equivalences for self-conformal IFSs:
\begin{align}
\mbox{OSC} &\;\Longleftrightarrow\; \mathcal{H}^{s_0}(\Lambda)>0 \;(\text{in particular, } \dim_{\mathrm H}\Lambda=s_0)\nonumber \\
&\;\Longleftrightarrow\;
\inf\left\{
\frac{\sup_{x\in\Lambda}  \left|
	S_{\iiv}(x)-S_{\jjv}(x)
	\right|}{\min\left\{
	\|S'_{\iiv}(\Lambda)\|,
	\|S'_{\jjv}(\Lambda)\|,
	\right\}}
:
\iiv,\jjv\in\Sigma^{\ast},\;
\iiv\neq\jjv
\right\}>0. \label{eq:32}
\end{align}
These equivalences were proved previously in the self-similar case by Bandt and Graf~\cite{BandtGraf_1992self} and Schief~\cite{Shief_SelfSim_94ProcAMS}. It is not difficult to see that~\eqref{eq:32} is also equivalent to
\begin{equation}\label{eq:39}
\inf\Big\{
\|S'_{\iiv}(\Lambda)\|^{-1}
\sup\limits_{x\in \Lambda}
|
S_{\iiv}(x)-S_{\jjv}(x)
|
:
\iiv,\jjv\in\Sigma^{\ast}, \iiv\neq\jjv
\Big\}>0.
\end{equation}

The relationship between the OSC and the WSP is given by the following observation.
\begin{fact}\label{eq:31}
A self-conformal IFS $\mathcal{S}$ on the line (or more generally in $\mathbb{R}^d$) satisfies the $\mathrm{OSC}$ if and only if $\mathcal{S}$ satisfies the $\mathrm{WSP}$ and has {\bf no} \emph{exact overlaps}.
\end{fact}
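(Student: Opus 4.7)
The idea is to recast both OSC and WSP as isolation conditions on the family of compositions $T_{\iiv,\jjv}:=S_{\iiv}^{-1}\circ S_{\jjv}$, with an exact overlap being precisely the event that $T_{\iiv,\jjv}\equiv\mathrm{Id}$ on $\Lambda$ for some $\iiv\neq\jjv$. By bounded distortion \eqref{eq:96} and the mean value theorem, the normalized quantity
\[
d(\iiv,\jjv):=\|S_{\iiv}'\|^{-1}\sup_{x\in\Lambda}|S_{\iiv}(x)-S_{\jjv}(x)|
\]
is comparable, up to uniform constants, to $\sup_{y\in S_{\jjv}(\Lambda)}|T_{\iiv,\jjv}(y)-y|$, and the OSC characterisation \eqref{eq:39} reads: OSC holds if and only if $\inf_{\iiv\neq\jjv}d(\iiv,\jjv)>0$.

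The forward implication is then almost immediate. Under OSC we have $d(\iiv,\jjv)>0$ for every $\iiv\neq\jjv$, which gives $\sup_{x\in\Lambda}|S_{\iiv}(x)-S_{\jjv}(x)|>0$ and rules out exact overlaps. For WSP, a Moran volume argument suffices: letting $U$ be the OSC open set, the sets $\{S_{\iiv}(U):\iiv\in\Phi(x,r)\}$ are pairwise disjoint, each contains a Euclidean ball of radius $\asymp r$ by bounded distortion (since $\|S_{\iiv}'\|\asymp r$ on $\Phi(x,r)$), and all of them lie in some fixed enlargement $B(x,Cr)$; comparing Lebesgue measures yields a uniform bound on $\#\Phi(x,r)$.

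For the backward direction, assume WSP holds and there are no exact overlaps. Every $\iiv\neq\jjv$ then gives $S_{\iiv}|_{\Lambda}\neq S_{\jjv}|_{\Lambda}$ and hence $d(\iiv,\jjv)>0$. It remains to show the infimum $\eta:=\inf_{\iiv\neq\jjv}d(\iiv,\jjv)$ is strictly positive. This is the Zerner-type characterisation of WSP---proved for self-similar IFSs in \cite{Zerner_WSP_96ProcAMS} and available in the self-conformal setting through the arguments of \cite{KaenmakiEtal_SelfConf_20BLMS}---which states that WSP is equivalent to the positivity of the same infimum taken only over pairs with distinct maps $S_{\iiv}|_{\Lambda}\neq S_{\jjv}|_{\Lambda}$. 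Under the no-exact-overlap hypothesis this restriction is automatic, so $\eta>0$ and OSC follows from \eqref{eq:39}.

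The main obstacle is the nontrivial direction of this Zerner-type characterisation: a sequence of near-overlaps $(\iiv_n,\jjv_n)$ with $S_{\iiv_n}|_{\Lambda}\neq S_{\jjv_n}|_{\Lambda}$ and $d(\iiv_n,\jjv_n)\to 0$ has to be converted into an unbounded $\#\Phi(x,r)$. The standard concatenation argument takes one such pair $(\iiv,\jjv)$ at scale $r:=\|S_{\iiv}'\|$ with small $d:=d(\iiv,\jjv)$ and forms the $2^L$ words obtained by choosing either $\iiv$ or $\jjv$ in each of $L$ concatenation slots; these are pairwise distinct by no-exact-overlap, live at a common scale $\asymp r^L$, and a bounded distortion computation shows that whenever $d\lesssim r^{L-1}$ their cylinders cluster inside a single ball of radius $\asymp r^L$. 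Taking $L$ large forces $2^L$ to exceed any a priori bound on $\#\Phi$, provided one can find a witness with $r$ bounded away from zero---the preliminary reduction to such a witness, via stripping common prefixes and passing to subsequences, is the technical crux that makes the concatenation scheme applicable.
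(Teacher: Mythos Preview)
Your proposal is correct and follows essentially the same route as the paper. Both hinge on the identity limit criterion from \cite{KaenmakiEtal_SelfConf_20BLMS}, i.e.\ that WSP is equivalent to positivity of the infimum in \eqref{eq:30}, and then observe that \eqref{eq:30} and the OSC characterisation \eqref{eq:39} differ only in whether the index set is $\{\iiv\neq\jjv\}$ or $\{S_{\iiv}|_\Lambda\neq S_{\jjv}|_\Lambda\}$, which coincide precisely when there are no exact overlaps. The paper's proof is two lines because it invokes \eqref{eq:30} as a black box for both directions; you instead give a separate Moran volume argument for OSC $\Rightarrow$ WSP and a sketch of the concatenation proof behind \eqref{eq:30}, but these are standard embellishments rather than a different strategy.
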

\begin{proof}
For self-conformal IFSs, it was proved in \cite[Theorem 3.2]{KaenmakiEtal_SelfConf_20BLMS} that
the WSP is equivalent to the \texttt{identity limit criterion}, which holds if
\begin{equation}\label{eq:30}
\inf\Big\{
\|S'_{\iiv}\|^{-1}
\sup\limits_{x\in\Lambda}
\left|
S_{\iiv}(x)-S_{\jjv}(x)
\right|
:
\iiv,\jjv\in\Sigma^{\ast},\,
S_{\iiv}|_{\Lambda}\ne
S_{\jjv}|_{\Lambda}
\Big\}>0.
\end{equation}
Comparing this with \eqref{eq:39} proves the assertion.
\end{proof}

%%%%%%%%%%%%%%%%%%%%%%%%%%%%%%%%%%%%%%%%%%%%%%%%%%%%%%%%%%%%%%%%%%%%%%%%%%%
%%%%%%%%%%%%%%%%%%%%%%%%%%%%%%%%%%%%%%%%%%%%%%%%%%%%%%%%%%%%%%%%%%%%%%%%%%%
\section{Generic conformal IFSs, proof of Theorem \ref{thm:98}}\label{sec:04}

Similarly to parameterized families, let us introduce
\begin{align*}
\mathscr{SSP}_{\beta,\rho}^{m}(X) &:=
\left\{
\mathcal{S}\in\Theta_{\beta,\rho}^{m}(X):
\mbox{SSP  holds for } \mathcal{S}
\right\},  \\
\mathscr{WSP}_{\beta,\rho}^{m}(X) &:=
\left\{
\mathcal{S}\in\Theta_{\beta,\rho}^{m}(X):
\mbox{WSP holds for } \mathcal{S}
\right\}, \\
\mathscr{EO}_{\beta,\rho}^{m}(X) &:=
\left\{
\mathcal{S}\in\Theta_{\beta,\rho}^{m}(X):
\mathcal{S} \mbox{ has an exact overlap}
\right\}.
\end{align*}
Slightly abusing notation, since $m$, $0<\beta<\rho<1$ and the compact interval $X$ are fixed, we suppress them from the notation and write simply $\mathscr{SSP},\, \mathscr{WSP},\, \mathscr{EO}$, moreover, $\mathscr{NOWSP}:=\Theta_{\beta,\rho}^{m}(X)\setminus \mathscr{WSP}$ and $\mathscr{NOSSP}:=\Theta_{\beta,\rho}^{m}(X)\setminus \mathscr{SSP}$.

To prove Theorem~\ref{thm:98} it is enough to show that $\mathscr{SSP}\cup
\mathscr{NOWSP}$ is a dense $G_\delta$ set in $\Theta_{\beta,\rho}^{m}(X)$ and that $\mathscr{EO}$ is a set of first category.

\begin{theorem}\label{thm:extended}
		In the topology on $\mathscr{NOSSP}$ defined by \eqref{eq:38}, the set $\mathscr{NOWSP}$ is dense $G_\delta$.
\end{theorem}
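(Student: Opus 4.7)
The plan is to handle Theorem~\ref{thm:extended} in two pieces: the $G_\delta$ property and density of $\mathscr{NOWSP}$ in $\mathscr{NOSSP}$.

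First, for the $G_\delta$ property I would combine Fact~\ref{eq:31} with the identity limit criterion \eqref{eq:30}: WSP for $\mathcal{S}$ is equivalent to strict positivity of
\[
\iota(\mathcal{S}):=\inf\Big\{\|S'_{\iiv}\|^{-1}\sup_{x\in\Lambda}|S_{\iiv}(x)-S_{\jjv}(x)|:\iiv,\jjv\in\Sigma^{\ast},\ S_{\iiv}|_{\Lambda}\not\equiv S_{\jjv}|_{\Lambda}\Big\}.
\]
Consequently $\mathscr{NOWSP}=\bigcap_{n\geq 1}U_n$, where $U_n$ is the union over all pairs $(\iiv,\jjv)\in(\Sigma^{\ast})^2$ of the sets
\[
\big\{\mathcal{S}:0<\|S'_{\iiv}\|^{-1}\sup_{x\in\Lambda}|S_{\iiv}(x)-S_{\jjv}(x)|<1/n\big\}.
\]
For each fixed pair, that set is open in $\Theta_{\beta,\rho}^{m}(X)$: the finite composition $S_{\iiv}$ and its derivative are continuous in the $\vertiii{\cdot}$ metric, the attractor $\Lambda$ varies continuously in Hausdorff metric thanks to the uniform contraction bound $\rho<1$, and the strict positivity condition is equivalent to $S_{\iiv}|_{\Lambda}\not\equiv S_{\jjv}|_{\Lambda}$ so that no extra condition is needed. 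Taking the union over pairs preserves openness, hence $\mathscr{NOWSP}$ is a $G_\delta$ in $\mathscr{NOSSP}$.

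Second, for density I would fix $\mathcal{S}_0\in\mathscr{NOSSP}$ and $\varepsilon>0$ and produce $\tilde{\mathcal{S}}\in B(\mathcal{S}_0,\varepsilon)\cap\mathscr{NOWSP}$. Failure of the SSP supplies $i\neq j$ with $S_i(\Lambda_0)\cap S_j(\Lambda_0)\neq\emptyset$, hence codings $\ii,\jj\in\Sigma$ with $i_1\neq j_1$ and $\Pi_{\mathcal{S}_0}(\ii)=\Pi_{\mathcal{S}_0}(\jj)$. I would pass to a one-parameter perturbation $\{\mathcal{S}_t\}_{t\in(-\delta,\delta)}\subset B(\mathcal{S}_0,\varepsilon)$, e.g.\ by replacing $S_i$ with $x\mapsto S_i(x)+t$ (still in $\Theta_{\beta,\rho}^m(X)$ for $\delta$ small), and apply a second Baire argument inside this line: if $V_n:=\{t:\mathcal{S}_t\in U_n\}$ is open and dense in $(-\delta,\delta)$ for every $n$, then $\bigcap_nV_n$ is nonempty and gives $t$ arbitrarily close to $0$ with $\mathcal{S}_t\in\mathscr{NOWSP}$. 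Openness of $V_n$ is automatic from the first step; density is the substantive point. At $t=0$, the pairs $\iiv_N=\ii|N$, $\jjv_N=\jj|N$ yield cylinders $S_{\iiv_N}(\Lambda_0),S_{\jjv_N}(\Lambda_0)$ that overlap, so by bounded distortion \eqref{eq:96} the quantity $\|(S^{(t)}_{\iiv_N})'\|^{-1}\sup_{x\in\Lambda_t}|S^{(t)}_{\iiv_N}(x)-S^{(t)}_{\jjv_N}(x)|$ stays uniformly bounded along this sequence and, by continuity, for $t$ near $0$. Choosing $N$ large and then perturbing $t$ slightly would push this quantity below $1/n$.

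The main obstacle is upgrading smallness into strict smallness: ensuring that the distinguished pair $(\iiv_N,\jjv_N)$ is not accidentally an exact overlap, $S^{(t)}_{\iiv_N}|_{\Lambda_t}\equiv S^{(t)}_{\jjv_N}|_{\Lambda_t}$, which would eject $t$ from $V_n$. This is where the one-parameter freedom pays off: the letter $i$ appears with positive multiplicity in $\iiv_N$, so $t\mapsto S^{(t)}_{\iiv_N}$ is genuinely non-constant (depending polynomially on $t$), while $S^{(t)}_{\jjv_N}$ either does not contain $i$ at all or involves it with different multiplicity; thus exact coincidences at any fixed level $N$ occur at only finitely many $t$, hence form a nowhere-dense subset, and one obtains density of $V_n$ by trading between levels $N$ or between pairs whenever an unlucky coincidence occurs. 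Intersecting the $V_n$ via Baire then delivers the required $\tilde{\mathcal{S}}$ and completes the proof.
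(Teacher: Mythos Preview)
Your $G_\delta$ argument is fine and is essentially identical to the paper's Lemma~\ref{lem:50}. The density argument, however, has a genuine gap. The quantity in the identity limit criterion \eqref{eq:30}, namely $\|S'_{\iiv}\|^{-1}\sup_{x\in\Lambda}|S_{\iiv}(x)-S_{\jjv}(x)|$, measures how close $S_{\iiv}$ and $S_{\jjv}$ are \emph{as maps on $\Lambda$}, relative to the scale $\|S'_{\iiv}\|$; for it to be small one needs the images to nearly coincide \emph{and} the derivatives $\|S'_{\iiv}\|,\|S'_{\jjv}\|$ to be nearly equal. Mere overlap of the cylinders $S_{\ii|N}(\Lambda)$ and $S_{\jj|N}(\Lambda)$ gives only that this quantity is bounded above by a constant depending on $\beta,\rho$ and the bounded distortion constant --- not that it goes to $0$ as $N\to\infty$, and not that a small translation in $t$ would make it small. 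Indeed, along the truncations $\ii|N,\jj|N$ the ratio $\|S'_{\ii|N}\|/\|S'_{\jj|N}\|$ need not even stay bounded, and if one passes to a Moran cut it stays bounded but not close to $1$. So ``choosing $N$ large and then perturbing $t$'' does not force membership in $U_n$, and your Baire-within-the-line argument collapses: you have neither shown that $V_n$ is dense near $0$, nor (what Baire would actually require) that it is dense throughout $(-\delta,\delta)$.

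The paper's proof of density (Lemma~\ref{lem:45}) is substantially harder and proceeds by a chain of perturbations (Facts~\ref{fact:96}--\ref{fact:99}): starting from $\mathcal{G}\in\mathscr{NOSSP}$, one first perturbs to an IFS possessing a point $\widetilde{x}$ which is a common fixed point of two distinct finite compositions $S_{\pmb{\omega}},S_{\pmb{\tau}}$ (this is where a genuine intermediate-value argument on a one-parameter family is used), then perturbs again to make $\log|S'_{\pmb{\omega}}(\widetilde{x})|/\log|S'_{\pmb{\tau}}(\widetilde{x})|$ irrational. The key technical step (Fact~\ref{fact:99}) is that this irrationality, via Dirichlet's approximation theorem, produces arbitrarily many compositions of $S_{\pmb{\omega}},S_{\pmb{\tau}}$ all fixing $\widetilde{x}$ with nearly equal derivatives there, hence nearly equal on $\Lambda$ by bounded distortion --- which is precisely what forces the identity limit criterion to fail. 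Your proposal is missing any mechanism of this kind for manufacturing pairs with nearly equal derivatives, and that is the heart of the matter.
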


First, let us show that

\begin{lemma}\label{lem:50}
In the topology defined by the distance in \eqref{eq:38},
\begin{equation*}
\mathscr{NOWSP} \mbox{ is a } G_\delta \mbox{ set in }
\mathscr{NOSSP}
\end{equation*}
\end{lemma}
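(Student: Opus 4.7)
The plan is to reduce the problem to the identity limit criterion \eqref{eq:30} (equivalent to WSP by \cite{KaenmakiEtal_SelfConf_20BLMS}). By that criterion, $\mathcal{S} \in \mathscr{NOWSP}$ precisely when for every $n \in \N$ there exist $\iiv, \jjv \in \Sigma^\ast$ with $S_{\iiv}|_\Lambda \neq S_{\jjv}|_\Lambda$ and
\begin{equation*}
\|S'_{\iiv}\|^{-1} \sup_{x \in \Lambda} |S_{\iiv}(x) - S_{\jjv}(x)| < \frac{1}{n}.
\end{equation*}
Set
\begin{equation*}
U_n := \Big\{ \mathcal{S} \in \mathscr{NOSSP} : \exists\, \iiv, \jjv \in \Sigma^\ast \text{ with } S_{\iiv}|_\Lambda \neq S_{\jjv}|_\Lambda \text{ and } \|S'_{\iiv}\|^{-1} \sup_{x \in \Lambda} |S_{\iiv}(x) - S_{\jjv}(x)| < 1/n \Big\},
\end{equation*}
so that $\mathscr{NOWSP} = \bigcap_n U_n$. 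It therefore suffices to show each $U_n$ is open in $\mathscr{NOSSP}$.

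Fix $\mathcal{S}_0 \in U_n$ and a witness pair $\iiv, \jjv$. For each fixed word, the composition $\mathcal{S} \mapsto S_{\iiv}$ is $\vertiii{\cdot}$-continuous by the chain rule and the uniform bounds $\beta \leq |S'_i(x)| \leq \rho$ on $\Theta_{\beta,\rho}^m(X)$; in particular $\mathcal{S} \mapsto \|S'_{\iiv}\|_{\sup}$ is continuous and bounded away from $0$. Because all maps contract $X$ into itself uniformly (with ratio $\leq \rho < 1$), the attractor map $\mathcal{S} \mapsto \Lambda(\mathcal{S})$ is continuous in the Hausdorff metric; together with uniform convergence of $S_{\iiv}, S_{\jjv}$ this implies that
\begin{equation*}
\mathcal{S} \mapsto \|S'_{\iiv}\|^{-1} \sup_{x \in \Lambda(\mathcal{S})} |S_{\iiv}(x) - S_{\jjv}(x)|
\end{equation*}
is continuous on $\Theta_{\beta,\rho}^m(X)$, so the strict inequality $< 1/n$ persists in a small $\vertiii{\cdot}$-ball around $\mathcal{S}_0$.

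The step that requires care is preservation of the qualitative constraint $S_{\iiv}|_\Lambda \neq S_{\jjv}|_\Lambda$ under perturbation, since $\Lambda$ itself moves. But this is handled as follows: pick $x_0 \in \Lambda_0 := \Lambda(\mathcal{S}_0)$ with $S_{0,\iiv}(x_0) \neq S_{0,\jjv}(x_0)$; by joint continuity of $(x, \mathcal{S}) \mapsto S_{\iiv}(x) - S_{\jjv}(x)$ there is an open neighborhood $W \ni x_0$ in $X$ and a $\vertiii{\cdot}$-neighborhood $N$ of $\mathcal{S}_0$ on which $|S_{\iiv}(x) - S_{\jjv}(x)| > \delta$ for some $\delta > 0$. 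Hausdorff convergence $\Lambda(\mathcal{S}) \to \Lambda_0$ guarantees $\Lambda(\mathcal{S}) \cap W \neq \emptyset$ for $\mathcal{S}$ near $\mathcal{S}_0$, so the inequality $S_{\iiv}|_\Lambda \neq S_{\jjv}|_\Lambda$ survives. Intersecting the two neighborhoods produces an open ball about $\mathcal{S}_0$ contained in $U_n$. The main (and only genuine) technical point is this persistence of the qualitative constraint; everything else is routine continuity, and the $G_\delta$ conclusion follows at once.
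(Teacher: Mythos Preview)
Your proof is correct and follows essentially the same route as the paper: you write $\mathscr{NOWSP}$ as the countable intersection of sets $U_n$ defined via the identity limit criterion \eqref{eq:30}, and argue that each is open. The paper asserts openness of the analogous sets $\mathcal{V}_\varepsilon$ in one line (``by its definition''), whereas you actually supply the continuity arguments---in particular the persistence of the constraint $S_{\iiv}|_\Lambda\neq S_{\jjv}|_\Lambda$ via Hausdorff convergence of attractors---that make this claim rigorous.
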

\begin{proof}
For an $\varepsilon>0$, we write
$$
\mathcal{V}_\varepsilon\!:=\!
\Big\{
\mathcal{S}\!\in\!  \Theta_{\beta,\rho}^{m}(X)\!:\!
\exists \iiv,\jjv\in\Sigma^{\ast},\,
S_{\iiv}|_{\Lambda}
\neq
S_{\jjv}|_{\Lambda},\,
\|S'_{\iiv}\|^{-1}\sup_{x\in\Lambda}  \left|
	S_{\iiv}(x)-S_{\jjv}(x)
	\right|\!
<\!
\varepsilon
\Big\}.
$$
By its definition, the set $\mathcal{V}_\varepsilon$ is open in $\Theta_{\beta,\rho}^{m}(X)$.
From \eqref{eq:30}, we get that
\begin{equation*}
  \mathscr{NOWSP}=
  \bigcap\limits_{n=1}^{\infty }
  \mathcal{V}_{1/n},
\end{equation*}
i.e. $\mathscr{NOWSP}$ is a $G_\delta$ set.
\end{proof}
\begin{lemma}\label{lem:45}
 $\mathscr{NOWSP}$ is a dense subset of $\mathscr{NOSSP}$.
\end{lemma}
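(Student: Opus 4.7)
Since $\mathscr{SSP}$ is open in $\Theta_{\beta,\rho}^m(X)$, its complement $\mathscr{NOSSP}$ is a closed subset of a complete metric space, hence itself a Baire space. From the proof of Lemma~\ref{lem:50}, $\mathscr{NOWSP}=\bigcap_{n\ge 1}\mathcal{V}_{1/n}$ with each $\mathcal{V}_\varepsilon$ open in $\Theta_{\beta,\rho}^m(X)$. So Baire's theorem reduces the claim to showing that for every $\varepsilon>0$ the open set $\mathcal{V}_\varepsilon$ is dense in $\mathscr{NOSSP}$. Fix $\mathcal{S}_0\in\mathscr{NOSSP}$ and $\varepsilon,\delta>0$; the task is to produce $\tilde{\mathcal{S}}\in\mathcal{V}_\varepsilon$ with $\mathrm{dist}(\tilde{\mathcal{S}},\mathcal{S}_0)<\delta$.

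Since $\mathcal{S}_0$ does not satisfy the SSP, there exist $\mathbf{a},\mathbf{b}\in\Sigma$ with $a_1\ne b_1$ and $\Pi_{\mathcal{S}_0}(\mathbf{a})=\Pi_{\mathcal{S}_0}(\mathbf{b})$. I would embed $\mathcal{S}_0$ in a finite-dimensional subfamily of $\Theta_{\beta,\rho}^m(X)$ --- for instance the translation subfamily $\mathcal{S}_0^{\underline\lambda}:=\{S_{0,i}+\lambda_i\}_{i=1}^m$ or, if needed, an augmentation of it by small derivative perturbations --- parameterised by $\underline\lambda$ in a small ball $B\subset\R^m$ around the origin so that each $\mathcal{S}_0^{\underline\lambda}\in\Theta_{\beta,\rho}^m(X)$ and $\mathrm{dist}(\mathcal{S}_0^{\underline\lambda},\mathcal{S}_0)<\delta$. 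By Definition~\ref{def:97}(b), the map $\Delta(\underline\lambda):=\Pi^{\underline\lambda}(\mathbf{a})-\Pi^{\underline\lambda}(\mathbf{b})$ is $C^1$ in $\underline\lambda$ with $\Delta(\underline 0)=0$. Using a partial-derivative computation analogous to the proof of Lemma~\ref{lemma:01}, the gradient $\nabla_{\underline\lambda}\Delta|_{\underline 0}$ does not vanish identically in at least one admissible direction (one either chooses $z\in\{a_1,b_1\}$ so that the relevant component is nonzero, or enlarges the perturbation space to break any degeneracy). Consequently the coincidence locus $L:=\{\underline\lambda\in B:\Delta(\underline\lambda)=0\}$ is a nowhere-dense $C^1$ submanifold through $\underline 0$, and $|\Delta(\underline\lambda)|$ achieves arbitrarily small positive values on $B\setminus L$.

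The remaining, and principal, task is to convert such a small $|\Delta(\underline\lambda)|$ into a witness for $\mathcal{V}_\varepsilon$: a pair $(\iiv,\jjv)\in\Sigma^\ast\times\Sigma^\ast$ with $\tilde S_\iiv|_{\tilde\Lambda^{\underline\lambda}}\ne \tilde S_\jjv|_{\tilde\Lambda^{\underline\lambda}}$ and $\|(\tilde S_\iiv)'\|^{-1}\sup_{x\in\tilde\Lambda^{\underline\lambda}}|\tilde S_\iiv(x)-\tilde S_\jjv(x)|<\varepsilon$. I anticipate this pair to arise from matching truncations $\mathbf{a}|N$ and $\mathbf{b}|M$ of comparable derivative magnitudes --- adjusted via the bounded distortion property~\eqref{eq:96} --- whose compositions agree at one pivot point up to the error $|\Delta(\underline\lambda)|$. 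Distinctness on the attractor is generic in $\underline\lambda$: the exact-overlap relation for a fixed pair is an analytic condition, hence nowhere dense in $B$ unless it holds on all of $B$ (a structural degeneracy that can be sidestepped by a different choice of pair). The principal obstacle is to control the sup-norm of $\tilde S_\iiv-\tilde S_\jjv$ over the \emph{entire} attractor $\tilde\Lambda^{\underline\lambda}$ rather than merely at a single pivot: a priori, pairing via the overlap alone gives a difference of order $\|(\tilde S_\iiv)'\|+\|(\tilde S_\jjv)'\|$, which is not small relative to $\|(\tilde S_\iiv)'\|$. Overcoming this requires iterating the overlap so that approximate coincidences propagate to every scale within $\tilde\Lambda^{\underline\lambda}$, using crucially the conformal and bounded-distortion structure of the IFS. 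Settling this combinatorial-geometric step is where the bulk of the technical work lies.
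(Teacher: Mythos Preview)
Your Baire reduction to showing that each $\mathcal{V}_\varepsilon$ is dense in $\mathscr{NOSSP}$ is correct but does not sidestep the real difficulty, which you yourself identify: a single coincidence $\Pi(\mathbf{a})\approx\Pi(\mathbf{b})$ only pins the compositions $S_{\mathbf{a}|N}$ and $S_{\mathbf{b}|M}$ together at one point, and on the rest of the attractor they can disagree by order $\|S'_{\mathbf{a}|N}\|\cdot\mathrm{diam}(\Lambda)$, so the normalised sup-distance is of order $1$, not $\varepsilon$. Your suggestion to ``iterate the overlap so that approximate coincidences propagate to every scale'' does not yield a concrete mechanism: composing on either side by further IFS maps preserves coincidence at the single image/preimage point but does nothing to force agreement elsewhere.

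The paper resolves this with an idea absent from your outline. Rather than working with a near-coincidence of projections, one perturbs so that two iterates $S_{\pmb\omega}$ and $S_{\pmb\tau}$ (with $\omega_1\ne\tau_1$) share a genuine \emph{common fixed point} $\tilde x$; this is obtained by an intermediate-value argument along a one-parameter family interpolating between the original IFS and a small bump perturbation of it. A further $C^{1+\alpha}$ perturbation (again by a carefully chosen bump) makes $\log|S'_{\pmb\omega}(\tilde x)|/\log|S'_{\pmb\tau}(\tilde x)|$ irrational. Now \emph{every} word in $\{\pmb\omega,\pmb\tau\}^\ast$ fixes $\tilde x$, and Dirichlet's approximation theorem produces, for each $j$, roughly $\sqrt{j}$ such words whose derivatives at $\tilde x$ agree up to a factor $b^{\pm 1/\sqrt{j}}$; bounded distortion then forces their images of $\Lambda$ to have comparable diameters and all to contain $\tilde x$, so $\#\Phi_N(\tilde x,\eta)\to\infty$ and WSP fails. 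The common fixed point is what lets a pointwise coincidence automatically propagate through all compositions, and the irrationality plus Dirichlet is what manufactures arbitrarily many near-identical maps --- this is the missing engine in your proposal. Note also that a pure translation family keeps all derivatives unchanged, so it cannot by itself create the irrational log-ratio; the paper's bump perturbations are chosen precisely to move derivatives at selected points while leaving the fixed-point structure intact.
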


Theorem~\ref{thm:extended} clearly follows by the combination of Lemma~\ref{lem:50} and Lemma~\ref{lem:45}.

\begin{proof}[Proof of Lemma~\ref{lem:45}]
It is enough to show that for any $\mathcal{G}\notin\mathscr{SSP}$ there exists an $\widetilde{\mathcal{S}}\in\mathscr{NOWSP}$ arbitrarily close to $\mathcal{G}$ in the topology defined by the distance   in \eqref{eq:38}. We achieve this through a succession of steps.

\begin{fact}\label{fact:96}
	Assume $\mathcal{G}\notin\mathscr{SSP}$, i.e. there exist $\mathbf{i},\mathbf{j}\in\Sigma$ with $i_1\neq j_1$ such that
	\begin{equation}\label{eeq:81}
	\Pi_{\mathcal{G}}(\mathbf{i})=\Pi_{\mathcal{G}}(\mathbf{j}).
	\end{equation}
	Then for every $\widetilde{\varepsilon}>0$ there exists an IFS $\widetilde{\mathcal{G}} \in \Theta_{\beta,\rho}^{m}(X)$ satisfying
	\begin{equation}\label{eeq:80}
	\Pi_{\widetilde{\mathcal{G}}}(\mathbf{j})-\Pi_{\widetilde{\mathcal{G}}}(\mathbf{i})>0 \;\text{ and }\; \mathrm{dist}(\mathcal{G},\widetilde{\mathcal{G}})<\widetilde{\varepsilon}.
	\end{equation}
\end{fact}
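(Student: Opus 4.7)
The plan is to construct $\widetilde{\mathcal{G}}$ as a single localised perturbation of the map $S_{i_1}$ near the point $q:=\Pi_{\mathcal{G}}(\sigma\mathbf{i})$, which satisfies $S_{i_1}(q)=\Pi_{\mathcal{G}}(\mathbf{i})=\Pi_{\mathcal{G}}(\mathbf{j})$. Fix a $\mathcal{C}^{1+\alpha}$ bump $\phi\colon X\to[0,1]$ with $\phi(q)=1$ and $\mathrm{supp}(\phi)\subset N$ for a small neighbourhood $N$ of $q$, and put $\widetilde{S}_{i_1}:=S_{i_1}+t\phi$, $\widetilde{S}_k:=S_k$ for $k\ne i_1$, with a scalar $t\in\mathbb{R}$ to be fixed below. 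For $|t|$ small enough $\widetilde{\mathcal{G}}\in\Theta_{\beta,\rho}^m(X)$ and $\mathrm{dist}(\mathcal{G},\widetilde{\mathcal{G}})=O(t)$, so the proximity requirement in~\eqref{eeq:80} is automatic.

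The key observation is that the perturbed recursion $\tilde{\pi}(\mathbf{k})=\widetilde{S}_{k_1}(\tilde{\pi}(\sigma\mathbf{k}))$ differs from the unperturbed one only at positions $\ell$ with $k_\ell=i_1$ and $\tilde{\pi}(\sigma^\ell\mathbf{k})\in N$. Consider the countable ``collision set''
\begin{equation*}
R:=\{\Pi_{\mathcal{G}}(\sigma^\ell\mathbf{i}):\ell\ge 2,\,i_\ell=i_1\}\cup\{\Pi_{\mathcal{G}}(\sigma^\ell\mathbf{j}):\ell\ge 1,\,j_\ell=i_1\}.
\end{equation*}
In the \emph{generic case} $q\notin R$, take $N$ with $\overline{N}\cap R=\emptyset$. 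A short bootstrap using the uniform bound $\rho<1$ on the derivatives and the recursive estimate $|\tilde{\pi}(\mathbf{k})-\Pi_{\mathcal{G}}(\mathbf{k})|\le\rho|\tilde{\pi}(\sigma\mathbf{k})-\Pi_{\mathcal{G}}(\sigma\mathbf{k})|+|t|\phi(\tilde{\pi}(\sigma\mathbf{k}))\mathbf{1}\{k_1=i_1\}$ shows that for $|t|$ small enough, $\tilde{\pi}(\sigma^\ell\mathbf{k})\notin N$ at every relevant position. Hence the bump is ``invisible'' along $\mathbf{j}$ and along $\mathbf{i}$ at positions $\ell\ge 2$, so $\tilde{\pi}(\mathbf{j})=\Pi_{\mathcal{G}}(\mathbf{j})$ and $\tilde{\pi}(\mathbf{i})=\Pi_{\mathcal{G}}(\mathbf{i})+t$ \emph{exactly}; consequently $\tilde{\pi}(\mathbf{j})-\tilde{\pi}(\mathbf{i})=-t$, and any $t<0$ yields~\eqref{eeq:80}.

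The main obstacle is the \emph{exceptional case} $q\in R$, where the bump at $q$ unavoidably also perturbs $\tilde{\pi}(\mathbf{j})$ (and possibly later occurrences of $i_1$ in $\mathbf{i}$). Linearising the recursion in $t$ yields
\begin{equation*}
\tilde{\pi}(\mathbf{j})-\tilde{\pi}(\mathbf{i})=t\left(\sum_{\substack{\ell\ge 1:\,j_\ell=i_1\\ \Pi_{\mathcal{G}}(\sigma^\ell\mathbf{j})=q}}c^{\mathbf{j}}_\ell\,-\,1\,-\!\!\sum_{\substack{\ell\ge 2:\,i_\ell=i_1\\ \Pi_{\mathcal{G}}(\sigma^\ell\mathbf{i})=q}}c^{\mathbf{i}}_\ell\right)+O(t^2),
\end{equation*}
with $c^{\mathbf{k}}_\ell:=\prod_{r=1}^{\ell-1}S'_{k_r}(\Pi_{\mathcal{G}}(\sigma^r\mathbf{k}))$; each factor has absolute value strictly less than $1$. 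When the bracketed coefficient is non-zero, the correct sign of $t$ again gives~\eqref{eeq:80}. In the remaining degenerate sub-cases the fall-back is to replace the bump on $S_{i_1}$ by an analogous bump on $S_{j_1}$ near $\Pi_{\mathcal{G}}(\sigma\mathbf{j})$ (whose first-order coefficient has the dominant term $+1$), or more generally to perturb some $S_k$ at a point in the orbit of $\mathbf{i}$ or $\mathbf{j}$; since the space of admissible perturbations is infinite-dimensional whereas ``$\tilde{\pi}(\mathbf{i})=\tilde{\pi}(\mathbf{j})$'' is a single real constraint, at least one such perturbation separates $\tilde{\pi}(\mathbf{i})$ and $\tilde{\pi}(\mathbf{j})$, and a sign-choice of the perturbation parameter then delivers the prescribed direction.
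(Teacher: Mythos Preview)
Your overall strategy coincides with the paper's: perturb only $S_{i_1}$ by a smooth bump supported near $q=\Pi_{\mathcal{G}}(\sigma\mathbf{i})$ and show that this moves $\Pi(\mathbf{i})$ more than $\Pi(\mathbf{j})$. Your ``generic case'' is essentially the paper's Case~I, with one caveat: to choose $N$ with $\overline{N}\cap R=\emptyset$ you need $q\notin\overline{R}$, not merely $q\notin R$, since $R$ may well accumulate at $q$. The paper never attempts to avoid all of $R$; instead it fixes a depth $L$ with $1-\rho-2\rho^L>0$, chooses $\delta$ so that the bump support avoids only the finitely many orbit points $\Pi_{\mathcal{G}}(\sigma^\ell\mathbf{i}),\Pi_{\mathcal{G}}(\sigma^\ell\mathbf{j})$ for $2\le\ell\le L$, and absorbs the tail contribution via the geometric bound $\sum_{\ell>L}\rho^\ell\cdot\varepsilon\delta^8<\varepsilon\delta^8$. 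Your bootstrap inequality is the right tool, but it gives approximate rather than exact equality; the conclusion $\tilde\pi(\mathbf{j})=\Pi_{\mathcal{G}}(\mathbf{j})$ and $\tilde\pi(\mathbf{i})=\Pi_{\mathcal{G}}(\mathbf{i})+t$ \emph{exactly} is not justified in general.

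The genuine gap is in the exceptional case. Your linearised coefficient is correct, and if it is nonzero you are done. But your fall-back when it vanishes --- switching to a bump on $S_{j_1}$, or invoking that ``the space of admissible perturbations is infinite-dimensional whereas the constraint is a single real equation'' --- does not actually establish that the Fr\'echet differential of $\widetilde{\mathcal{G}}\mapsto\Pi_{\widetilde{\mathcal{G}}}(\mathbf{j})-\Pi_{\widetilde{\mathcal{G}}}(\mathbf{i})$ is nonzero at $\mathcal{G}$. The analogous coefficient for the $S_{j_1}$-bump can vanish for the same reasons, and the dimension count is only an obstruction to the constraint being \emph{open}, not a proof that any specific perturbation works. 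The paper deals with this by a concrete four-case split on whether the orbits $\{\Pi_{\mathcal{G}}(\sigma^\ell\mathbf{i})\}_{\ell\ge2}$ and $\{\Pi_{\mathcal{G}}(\sigma^\ell\mathbf{j})\}_{\ell\ge2}$ ever hit $q$; when they do, it exploits the resulting periodicity to \emph{replace} $\mathbf{i}$ (or $\mathbf{j}$) by an eventually periodic word with the same projection, after which the depth-$L$ truncation plus tail estimate again yields a strictly positive lower bound such as $\bigl(\tfrac{1}{1+\rho^{L-1}}-\tfrac{\rho^N}{1-\rho}\bigr)\varepsilon\delta^8>0$. Some such explicit control is needed; the dimension heuristic alone does not close the argument.
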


In the next step, for any $\alpha\in [0,1]$, we take the convex linear combination of $\mathcal{G}$ and $\widetilde{\mathcal{G}}$ from Fact~\ref{fact:96} defined by
\begin{equation*}
\mathcal{S}^\alpha:=
\alpha\mathcal{G}+(1-\alpha)\widetilde{\mathcal{G}}=\big\{S_i^\alpha=\alpha G_i+(1-\alpha)\widetilde{G}_i \big\}_{i=1}^{m}.
\end{equation*}

\begin{fact}\label{fact:97}
Assume $\mathcal{G},\,\widetilde{\mathcal{G}}\in \Theta_{\beta,\rho}^{m}(X)$ satisfy \eqref{eeq:81} and \eqref{eeq:80}, respectively, from Fact~\ref{fact:96}. Then $\mathcal{S}^\alpha\in \Theta_{\beta,\rho}^{m}(X)$ for all $\alpha\in[0,1]$ and there exists an $\alpha^\ast\in[0,1]$ such that
\begin{equation*}
\mathcal{S}^{\alpha^\ast}\in\mathscr{U},
\end{equation*}
where
\begin{multline}\label{eeq:82}
\mathscr{U}:=
\big\{
\mathcal{S}\in
\Theta_{\beta,\rho}^{m}(X):
\exists\, \widetilde{x}\in X,
\exists\,  \pmb{\omega},\pmb{\tau}\in\Sigma^*, \\
\omega_1\neq\tau_1,\; \omega_{|\pmb{\omega}|}\neq
\tau_{|\pmb{\tau}|},\;
\widetilde{x}=S_{\pmb{\omega}}(\widetilde{x})
=S_{\pmb{\tau}}(\widetilde{x})
\big\}.
\end{multline}
\end{fact}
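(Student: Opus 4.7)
The plan has two main parts. First, to show $\mathcal{S}^\alpha\in\Theta_{\beta,\rho}^m(X)$ for every $\alpha\in[0,1]$: since $X$ is a compact interval (hence convex) and $G_i(X),\widetilde{G}_i(X)\subseteq X$, convexity gives $S_i^\alpha(X)\subseteq X$. For the derivative bounds, both $G_i'$ and $\widetilde{G}_i'$ take values in $[-\rho,-\beta]\cup[\beta,\rho]$; provided $\widetilde\varepsilon$ in Fact~\ref{fact:96} has been chosen smaller than $\beta$, the two derivatives must share a sign at every $x\in X$, and then $|(S_i^\alpha)'(x)|=|\alpha G_i'(x)+(1-\alpha)\widetilde{G}_i'(x)|\in[\beta,\rho]$. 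The $C^{1+\alpha}$ membership is preserved since the H\"older seminorm on derivatives is convex.

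For the existence of $\alpha^\ast$, I would work with periodic approximations. Suppose first that $\mathbf{i},\mathbf{j}$ do not eventually agree; then there exist arbitrarily large $n$ with $i_n\ne j_n$. For such an $n$ set $\pmb{\omega}:=i_1\dots i_n$ and $\pmb{\tau}:=j_1\dots j_n$: the conditions $\omega_1\ne\tau_1$ and $\omega_{n}\ne\tau_{n}$ from \eqref{eeq:82} are satisfied, so a common fixed point of $S_{\pmb{\omega}}^{\alpha}$ and $S_{\pmb{\tau}}^\alpha$ would place $\mathcal{S}^\alpha$ in $\mathscr{U}$. Such a common fixed point exists exactly when the continuous function
\[
\phi_n(\alpha):=\Pi_{\mathcal{S}^\alpha}(\pmb{\tau}^\infty)-\Pi_{\mathcal{S}^\alpha}(\pmb{\omega}^\infty)
\]
vanishes. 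Because $\pmb{\omega}^\infty$ and $\mathbf{i}$ share their first $n$ letters, the contraction estimate $|\Pi_{\mathcal{S}^\alpha}(\pmb{\omega}^\infty)-\Pi_{\mathcal{S}^\alpha}(\mathbf{i})|\le \rho^n\mathrm{diam}(X)$ holds uniformly in $\alpha$, and analogously for $\pmb{\tau}$ and $\mathbf{j}$. Consequently $\phi_n(0)\to\Pi_{\widetilde{\mathcal{G}}}(\mathbf{j})-\Pi_{\widetilde{\mathcal{G}}}(\mathbf{i})>0$ and $\phi_n(1)\to\Pi_{\mathcal{G}}(\mathbf{j})-\Pi_{\mathcal{G}}(\mathbf{i})=0$ as $n\to\infty$. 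Thus for $n$ large $\phi_n(0)>0$; if additionally $\phi_n(1)\le 0$, the intermediate value theorem produces $\alpha^\ast\in[0,1]$ with $\phi_n(\alpha^\ast)=0$, and $\widetilde{x}:=\Pi_{\mathcal{S}^{\alpha^\ast}}(\pmb{\omega}^\infty)=\Pi_{\mathcal{S}^{\alpha^\ast}}(\pmb{\tau}^\infty)$ is the common fixed point required by \eqref{eeq:82}.

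The main obstacle is the possibility that $\phi_n(1)>0$ for every admissible $n$, so that both endpoints are positive and IVT does not apply directly. To handle this I would exploit the symmetry in Fact~\ref{fact:96}: swapping the roles of $\mathbf{i}$ and $\mathbf{j}$ produces, by an entirely analogous construction, a perturbation $\widetilde{\mathcal{G}}'$ with $\Pi_{\widetilde{\mathcal{G}}'}(\mathbf{j})-\Pi_{\widetilde{\mathcal{G}}'}(\mathbf{i})<0$. Since $\phi_n(1)$ depends only on $\mathcal{G}$ while replacing $\widetilde{\mathcal{G}}$ by $\widetilde{\mathcal{G}}'$ flips the sign of the $\alpha=0$ endpoint, one of the two perturbations gives $\phi_n$ with opposite signs at the two endpoints, to which IVT applies. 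Finally, the residual configuration $\sigma^N\mathbf{i}=\sigma^N\mathbf{j}$ is reduced to the previous case by choosing $N$ minimal: minimality forces $i_N\ne j_N$, so the argument above runs with $n=N$.
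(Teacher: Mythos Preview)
Your argument for $\mathcal{S}^\alpha\in\Theta_{\beta,\rho}^m(X)$ is fine and essentially matches the paper's (the paper uses the explicit form of $\widetilde{\mathcal G}$ from Fact~\ref{fact:96} to see that $G_i'$ and $\widetilde G_i'$ share a sign, but your observation that $\widetilde\varepsilon<\beta$ forces this works equally well).

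For the existence of $\alpha^\ast$, your approach is close in spirit to the paper's but differs in one key respect: you insist that $\pmb\omega=\mathbf i|_n$ and $\pmb\tau=\mathbf j|_n$ be \emph{truncations} of the given words, and then rely on the limit $\phi_n(1)\to 0$ together with a possible sign-flip of the perturbation to arrange an IVT sign change. The paper instead fixes $k$ large so that the cylinders $I^0_{\mathbf i|k},I^0_{\mathbf j|k}$ are disjoint (for $\widetilde{\mathcal G}$) while $I^1_{\mathbf i|k},I^1_{\mathbf j|k}$ overlap (since $\Pi_{\mathcal G}(\mathbf i)=\Pi_{\mathcal G}(\mathbf j)$ lies in both), and then \emph{extends} $\mathbf i|_k,\mathbf j|_k$ by further symbols to words $\pmb\omega,\pmb\tau$ whose $\alpha=1$ cylinders sit inside the overlap region in reversed order, with $\omega_{|\pmb\omega|}\ne\tau_{|\pmb\tau|}$. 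This engineers strict inequalities $i(0)<j(0)$ and $i(1)>j(1)$ for the fixed points, so IVT applies directly without any appeal to limits or sign dichotomies. Your sign-flip trick (choosing the perturbation of the opposite sign) does appear in the paper too, but only to dispose of the degenerate boundary case where the $\alpha=1$ cylinders merely touch.

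Your non-eventually-agreeing case is correct. However, your treatment of the case $\sigma^N\mathbf i=\sigma^N\mathbf j$ has a genuine gap. You assert that ``the argument above runs with $n=N$,'' but the argument above uses that $n$ can be taken large: you need $\rho^n\,\mathrm{diam}(X)$ small compared with $|\Pi_{\widetilde{\mathcal G}}(\mathbf j)-\Pi_{\widetilde{\mathcal G}}(\mathbf i)|$ in order to infer $\phi_n(0)>0$ (or $<0$ after switching). With $n=N$ fixed and $\widetilde\varepsilon$ small, $\phi_N(0)$ is merely close to $\phi_N(1)$, and $\phi_N(1)=\Pi_{\mathcal G}((\mathbf j|_N)^\infty)-\Pi_{\mathcal G}((\mathbf i|_N)^\infty)$ need not vanish nor have any prescribed sign; so neither choice of perturbation need produce a sign change. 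The paper's approach sidesteps this entirely because the extensions of $\mathbf i|_k,\mathbf j|_k$ are chosen freely to force the order of the $\alpha=1$ cylinders; whether $\mathbf i$ and $\mathbf j$ eventually agree is irrelevant. A repair in your framework would be to stop insisting on truncations: pick $k$ large enough that the $\alpha=0$ cylinders of $\mathbf i|_k,\mathbf j|_k$ separate, and then choose extensions inside the $\alpha=1$ overlap region as the paper does.
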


\begin{fact}\label{fact:98}
	For every $\widetilde{\varepsilon}>0$ and every $\mathcal{S}\in\mathscr{U}$ there exists an IFS $\widetilde{\mathcal{S}}\in \Theta_{\beta,\rho}^{m}(X)$ such that
	\begin{equation}\label{cv83}
	\mathrm{dist}\big(\mathcal{S},
	\widetilde{\mathcal{S}}\big)<\widetilde{\varepsilon} \;\mbox{ and }\;
	\widetilde{\mathcal{S}}\in\mathscr{R},
	\end{equation}
	where
\begin{multline*}
\mathscr{R}:=
\Big\{
\mathcal{S}\in \Theta_{\beta,\rho}^{m}(X)
:
\exists\, \widetilde{x}\in X,
\exists\,  \pmb{\omega},\pmb{\tau}\in\Sigma^*, \\
\omega_1\neq\tau_1,\; \omega_{|\pmb{\omega}|}\neq
\tau_{|\pmb{\tau}|},\;
\widetilde{x}=S_{\pmb{\omega}}(\widetilde{x})
=S_{\pmb{\tau}}(\widetilde{x}),\;
\frac{\log |S'_{\pmb{\omega}}(\widetilde{x})|}{\log |S'_{\pmb{\tau}}(\widetilde{x})|}
\not\in\mathbb{Q} \Big\}.
\end{multline*}
In particular, this holds for the IFS $\mathcal{S}^{\alpha^\ast}$ constructed in Fact~\ref{fact:97}.	
\end{fact}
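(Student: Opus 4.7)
The plan is to construct, for any $\mathcal{S}\in\mathscr{U}$ and $\widetilde{\varepsilon}>0$, a localized bump-function perturbation $\widetilde{\mathcal{S}}^t$ of $\mathcal{S}$ depending on a small scalar parameter $t$ that keeps $\widetilde{x}$ as a common fixed point of $\widetilde{S}^t_{\pmb{\omega}}$ and $\widetilde{S}^t_{\pmb{\tau}}$ for every $t$, and then exhibit a small $t$ for which the log-derivative ratio becomes irrational. Because the perturbed ratio will be a non-constant analytic function of $t$, the set of $t$ where it is rational will be countable, and any other sufficiently small $t$ yields the required $\widetilde{\mathcal{S}}\in\mathscr{R}$.

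Set $a:=\omega_{|\pmb{\omega}|}$ and $b:=\tau_{|\pmb{\tau}|}$, so $a\neq b$ by hypothesis. Enumerate the finitely many points in the $\pmb{\omega}$- and $\pmb{\tau}$-orbits of $\widetilde{x}$ that differ from $\widetilde{x}$, and pick a small open interval $U\ni\widetilde{x}$ disjoint from all of them. Choose a $\mathcal{C}^{1+\alpha}$ bump function $\varphi:X\to\R$ with $\mathrm{supp}(\varphi)\subset U$, $\varphi(\widetilde{x})=0$, and $\varphi'(\widetilde{x})=1$; such a $\varphi$ is standard to construct. Define $\widetilde{S}^t_b:=S_b+t\varphi$ and $\widetilde{S}^t_i:=S_i$ for $i\neq b$. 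Since $\varphi$ vanishes at $\widetilde{x}$ and outside $U$, at every orbit point $p$ of $\widetilde{x}$ under the two compositions we have $\widetilde{S}^t_b(p)=S_b(p)$, so the orbits coincide with the unperturbed ones and $\widetilde{x}$ stays a common fixed point. For $|t|$ sufficiently small we also have $\widetilde{\mathcal{S}}^t\in\Theta^m_{\beta,\rho}(X)$ and $\mathrm{dist}(\mathcal{S},\widetilde{\mathcal{S}}^t)=O(|t|)$.

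The only change to the derivative products at $\widetilde{x}$ comes from positions whose letter is $b$ and whose input iterate equals $\widetilde{x}$; writing $N^{(b)}_{\pmb{\omega}},N^{(b)}_{\pmb{\tau}}$ for these counts and $u(t):=\log|1+t/S'_b(\widetilde{x})|$, one obtains
\begin{equation*}
\log\bigl|(\widetilde{S}^t_{\pmb{\omega}})'(\widetilde{x})\bigr|=L_{\pmb{\omega}}+N^{(b)}_{\pmb{\omega}}\,u(t),\qquad \log\bigl|(\widetilde{S}^t_{\pmb{\tau}})'(\widetilde{x})\bigr|=L_{\pmb{\tau}}+N^{(b)}_{\pmb{\tau}}\,u(t),
\end{equation*}
where $L_{\pmb{\omega}}:=\log|S'_{\pmb{\omega}}(\widetilde{x})|$ and $L_{\pmb{\tau}}:=\log|S'_{\pmb{\tau}}(\widetilde{x})|$. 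Hence the ratio $R(t)$ is a non-constant analytic function of $t$ precisely when $N^{(b)}_{\pmb{\omega}}L_{\pmb{\tau}}\neq N^{(b)}_{\pmb{\tau}}L_{\pmb{\omega}}$. In the generic situation where neither orbit revisits $\widetilde{x}$ at an intermediate position, only the innermost step contributes, giving $N^{(b)}_{\pmb{\omega}}=0$ and $N^{(b)}_{\pmb{\tau}}=1$; the inequality reduces to $-L_{\pmb{\omega}}\neq 0$, which holds since $L_{\pmb{\omega}}<0$. Then for any $t$ with $R(t)\notin\mathbb{Q}$ and $|t|$ small enough, $\widetilde{\mathcal{S}}^t\in\mathscr{R}$ with distance less than $\widetilde{\varepsilon}$ from $\mathcal{S}$.

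The main obstacle is the degenerate case in which the $\pmb{\omega}$-orbit (or the $\pmb{\tau}$-orbit) revisits $\widetilde{x}$ at some intermediate position. Such a revisit means $\pmb{\omega}$ decomposes as a concatenation of shorter sub-words each fixing $\widetilde{x}$; my plan is then to replace $\pmb{\omega}$ by its innermost block $\pmb{\omega}^{(1)}$, the shortest suffix of $\pmb{\omega}$ whose composition still fixes $\widetilde{x}$. By minimality this shorter word has simple orbit at $\widetilde{x}$ and still ends with $a$, and an analogous reduction applies to $\pmb{\tau}$. A short combinatorial argument, using $\omega_1\neq\tau_1$ together with, if necessary, concatenations of the innermost blocks (e.g.\ $\pmb{\omega}^{(1)}\pmb{\tau}^{(1)}$ paired with $\pmb{\tau}^{(1)}$), secures the first-letter condition $\omega'_1\neq\tau'_1$ for the new pair of words. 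Once reduced to the simple-orbit setting, the calculation of the previous paragraph applies verbatim and produces the desired $\widetilde{\mathcal{S}}\in\mathscr{R}$.
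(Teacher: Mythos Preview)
Your construction is essentially the paper's: both build a perturbation that fixes every intermediate orbit point and $\widetilde{x}$ itself while altering the derivative of exactly one designated map at $\widetilde{x}$. The paper uses the polynomial $L(x)=\prod_{p}(x-y_p)^2\prod_{q}(x-z_q)^2$, perturbing $S_{\omega_k}$ by $\varepsilon L(x)(x-\widetilde{x})$ and every other $S_i$ by $\varepsilon L(x)(x-\widetilde{x})^2$; you instead use a $\mathcal{C}^{1+\alpha}$ bump supported near $\widetilde{x}$ and perturb $S_{\tau_\ell}$. In the simple-orbit case both yield one derivative product fixed and the other moving with the parameter, so the ratio is a non-constant analytic function and an irrational value is available arbitrarily close to the origin. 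The difference between your bump and the paper's polynomial is cosmetic.

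You rightly flag the orbit-revisit case, which the paper does not explicitly address: it simply asserts $L(\widetilde{x})\neq 0$, and this fails precisely when some $y_p$ or $z_q$ equals $\widetilde{x}$. However, your proposed repair is not complete as written. If the innermost blocks happen to share their first letter, the suggested pair $\bigl(\pmb{\omega}^{(1)}\pmb{\tau}^{(1)},\,\pmb{\tau}^{(1)}\bigr)$ still has the same first letter; and in any case a concatenation of two minimal blocks reintroduces a revisit of $\widetilde{x}$ at the junction, so the ``simple-orbit'' calculation from your previous paragraph no longer applies verbatim to the concatenated word. The degenerate case is correctly identified, but the ``short combinatorial argument'' you invoke needs to be actually carried out. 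One workable route is to keep the \emph{original} words $\pmb{\omega},\pmb{\tau}$ (which already satisfy both letter conditions), decompose each into its minimal blocks, and check that a one- or two-parameter perturbation of the derivatives of $S_a$ and $S_b$ at $\widetilde{x}$ moves $\log|S'_{\pmb{\omega}}(\widetilde{x})|\big/\log|S'_{\pmb{\tau}}(\widetilde{x})|$ non-trivially once the block structure is taken into account.
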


To conclude the proof of Lemma~\ref{lem:45}, we show the following.

\begin{fact}\label{fact:99}
	If $\mathcal{S}\in\mathscr{R}$, then WSP does not hold for $\mathcal{S}$, i.e. $\mathscr{R} \subset \mathscr{NOWSP}$.
\end{fact}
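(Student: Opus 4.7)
The plan is to show that the identity limit criterion \eqref{eq:30} fails; by Fact~\ref{eq:31} this is equivalent to $\mathcal{S}\notin\mathscr{WSP}$. Set $a:=S'_{\pmb{\omega}}(\widetilde{x})$ and $b:=S'_{\pmb{\tau}}(\widetilde{x})$; after replacing $\pmb{\omega}$ and $\pmb{\tau}$ by their squares if necessary (which preserves all four hypotheses defining $\mathscr{R}$) one may assume $a,b\in(0,1)$. The irrationality hypothesis together with Dirichlet's approximation theorem provides integer sequences $n_j,k_j\to\infty$ with $a^{n_j}\neq b^{k_j}$ and $\epsilon_j:=|a^{n_j}/b^{k_j}-1|\to 0$. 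A standard consequence of bounded distortion~\eqref{eq:96} and the uniform H\"older estimate on $\log S'_{\pmb{\omega}^N}$ is the Koenigs-type expansion
$$S_{\pmb{\omega}^N}(x)=\widetilde{x}+a^N\phi(x)+O(a^{N(1+\alpha)}),\qquad x\in X,$$
where $\phi(x):=\lim_{N}(S_{\pmb{\omega}^N}(x)-\widetilde{x})/a^N$ is a bounded continuous function on $X$ with $\phi(\widetilde{x})=0$ and $\phi'(\widetilde{x})=1$ (the difference $\phi_{N+1}-\phi_N$ is $O(a^{N\alpha-1})$, hence the sequence is Cauchy).

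Setting $M_j:=\lfloor n_j\alpha/2\rfloor$, I would take as witness pairs
$$\iiv_j:=\pmb{\tau}^{k_j}\pmb{\omega}^{M_j},\qquad \jjv_j:=\pmb{\omega}^{n_j+M_j}.$$
Both words fix $\widetilde{x}$ and start with distinct first letters $\tau_1\neq\omega_1$. Using the $\mathcal{C}^{1+\alpha}$ analogue $S_{\pmb{\tau}^{k_j}}(y)-\widetilde{x}=b^{k_j}(y-\widetilde{x})+O(b^{k_j}|y-\widetilde{x}|^{1+\alpha})+O(b^{k_j(1+\alpha)})$ valid for $y$ near $\widetilde{x}$, and substituting $y=S_{\pmb{\omega}^{M_j}}(x)$ from the Koenigs expansion, a direct computation yields
$$S_{\iiv_j}(x)-S_{\jjv_j}(x)=a^{M_j}(b^{k_j}-a^{n_j})\phi(x)+O\!\left(b^{k_j}a^{M_j(1+\alpha)}+b^{k_j(1+\alpha)}+a^{(n_j+M_j)(1+\alpha)}\right).$$
Dividing by $\|S'_{\iiv_j}\|\asymp b^{k_j}a^{M_j}$ and using $b^{k_j}\asymp a^{n_j}$, the ratio is bounded above by a constant times $\epsilon_j+a^{M_j\alpha}+a^{n_j\alpha-M_j}+a^{(n_j+M_j)\alpha}$, and the choice $M_j=\lfloor n_j\alpha/2\rfloor$ balances the competing exponents so that every piece tends to zero. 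To finish, $S_{\iiv_j}|_\Lambda\neq S_{\jjv_j}|_\Lambda$: both maps agree at $\widetilde{x}$ with differing derivatives $b^{k_j}a^{M_j}\neq a^{n_j+M_j}$ there, and $\widetilde{x}$ is an accumulation point of $\Lambda$ (the orbit $S_{\pmb{\omega}^N}(y)$ of any $y\in\Lambda\setminus\{\widetilde{x}\}$ consists of distinct points of $\Lambda$ converging to $\widetilde{x}$), so $S_{\iiv_j}-S_{\jjv_j}$ is nonzero on infinitely many points of $\Lambda$.

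The main obstacle I expect is that the \emph{naive} choice $\iiv_j=\pmb{\omega}^{n_j}$, $\jjv_j=\pmb{\tau}^{k_j}$ does \emph{not} suffice: the two cylinders are linearized by different Koenigs functions, $\phi$ for $\pmb{\omega}$ and $\psi$ for $\pmb{\tau}$, and $\phi-\psi$ is in general a nonzero bounded function on $\Lambda$, producing an $O(1)$ obstruction in the normalized difference. The point of inserting the common tail $\pmb{\omega}^{M_j}$ is precisely to force \emph{both} composed maps to be described by the single linearization $\phi$, so that the remaining discrepancy is driven purely by the Diophantine quantity $\epsilon_j$, which the irrationality of $\log|a|/\log|b|$ makes arbitrarily small; balancing $M_j$ between the requirements $M_j\to\infty$ (to kill the Koenigs remainder in the inner composition) and $M_j\ll n_j\alpha$ (to kill the Koenigs remainder from the outer $S_{\pmb{\tau}^{k_j}}$) is the delicate calibration.
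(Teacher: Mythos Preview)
Your proof is correct and takes a genuinely different route from the paper's. Both arguments start from Dirichlet's theorem to produce exponents with $a^{n_j}\approx b^{k_j}$, but diverge thereafter. The paper works directly with the counting definition of WSP: from a single Dirichlet pair $(i,j)$ it manufactures $\sim\sqrt{j}$ compositions $h_r=g_1^{j(\lfloor\sqrt{j}\rfloor-r)}\circ g_2^{ir}$, all fixing $\widetilde{x}$ and with pairwise derivative ratios in $(b^{1/\sqrt{j}},b^{-1/\sqrt{j}})$, then uses pigeonhole over $O(1)$ many scale buckets to force $\#\Phi_N(\widetilde{x},\eta)\gtrsim\sqrt{j}/K$. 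No linearization is needed---only bounded distortion and the diameter-ratio Claim~\ref{cv93}.

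Your approach instead targets the identity limit criterion~\eqref{eq:30} and is analytic: the Koenigs function $\phi$ turns the comparison $S_{\pmb{\tau}^{k_j}\pmb{\omega}^{M_j}}$ versus $S_{\pmb{\omega}^{n_j+M_j}}$ into an explicit expansion whose leading coefficient is the Diophantine quantity $b^{k_j}-a^{n_j}$. The common tail $\pmb{\omega}^{M_j}$ is a nice device that forces both branches through the \emph{same} linearizing chart, eliminating the $\phi-\psi$ obstruction you correctly flag; the calibration $M_j\sim n_j\alpha/2$ balances inner and outer Koenigs remainders. Your argument leans more heavily on the $\mathcal{C}^{1+\alpha}$ hypothesis (for the Koenigs error $O(a^{N(1+\alpha)})$) but produces a single explicit witness pair per $j$, whereas the paper's combinatorial count is more robust (only bounded distortion) and yields quantitative growth of $\#\Phi_N$. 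Either is a complete proof of Fact~\ref{fact:99}.
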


We prove all the facts separately in Subsection~\ref{sec:31}.
\end{proof}

\begin{lemma}\label{lemma:40}
	The set $\mathscr{EO}_{\beta,\rho}^{m}(X)$ is a set of first category, i.e. it is the union of countably many nowhere dense subsets of $\Theta_{\beta,\rho}^{m}(X)$.
\end{lemma}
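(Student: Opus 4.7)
The plan is to exhibit $\mathscr{EO}_{\beta,\rho}^{m}(X)$ as a countable union of closed nowhere-dense sets, with the crucial perturbation step supplied by Fact~\ref{fact:96}. For each pair $(\iiv,\jjv) \in \Sigma^\ast \times \Sigma^\ast$ with $\iiv \ne \jjv$, I would define
\begin{equation*}
\mathscr{EO}_{\iiv,\jjv} := \big\{\mathcal{S}\in\Theta_{\beta,\rho}^{m}(X) : S_{\iiv}|_\Lambda \equiv S_{\jjv}|_\Lambda\big\},
\end{equation*}
so that $\mathscr{EO}_{\beta,\rho}^{m}(X) = \bigcup_{\iiv \ne \jjv} \mathscr{EO}_{\iiv,\jjv}$ is a countable union, reducing the task to checking that each component is closed with empty interior. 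Closedness is a direct continuity argument: if $\mathcal{S}_k \to \mathcal{S}$ with $\mathcal{S}_k\in\mathscr{EO}_{\iiv,\jjv}$ and $y = \Pi_\mathcal{S}(\ii)\in\Lambda$, then by continuity of $\Pi$ in the IFS the approximants $y_k := \Pi_{\mathcal{S}_k}(\ii) \in \Lambda_{\mathcal{S}_k}$ converge to $y$, so the equality $S_{k,\iiv}(y_k) = S_{k,\jjv}(y_k)$ passes to the limit.

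The core of the proof is showing each $\mathscr{EO}_{\iiv,\jjv}$ has empty interior: given $\mathcal{S}$ in this set, I would construct $\tilde{\mathcal{S}}$ arbitrarily close with $\tilde{\mathcal{S}}\notin\mathscr{EO}_{\iiv,\jjv}$, splitting into cases according to whether the periodic sequences $\iiv^\infty, \jjv^\infty \in \Sigma$ coincide. In the generic case $\iiv^\infty \ne \jjv^\infty$, let $l$ be the smallest index of disagreement and set $\ii^\ast := \sigma^{l-1}\iiv^\infty$, $\jj^\ast := \sigma^{l-1}\jjv^\infty$, so that $i^\ast_1 \ne j^\ast_1$. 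Evaluating $S_\iiv|_\Lambda \equiv S_\jjv|_\Lambda$ at the fixed point $\Pi_\mathcal{S}(\iiv^\infty) \in \Lambda$ shows this point is also fixed by $S_\jjv$, hence equals $\Pi_\mathcal{S}(\jjv^\infty)$; cancelling the shared injective prefix $S_{i_1\cdots i_{l-1}} = S_{j_1\cdots j_{l-1}}$ then yields $\Pi_\mathcal{S}(\ii^\ast) = \Pi_\mathcal{S}(\jj^\ast)$. Fact~\ref{fact:96} applied to this pair produces $\tilde{\mathcal{S}}$ close to $\mathcal{S}$ with $\Pi_{\tilde{\mathcal{S}}}(\ii^\ast) \ne \Pi_{\tilde{\mathcal{S}}}(\jj^\ast)$, and reapplying the (injective) shared prefix of $\tilde{\mathcal{S}}$ gives $\Pi_{\tilde{\mathcal{S}}}(\iiv^\infty) \ne \Pi_{\tilde{\mathcal{S}}}(\jjv^\infty)$, violating a necessary condition for the putative exact overlap. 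In the degenerate case $\iiv^\infty = \jjv^\infty$, the words $\iiv$ and $\jjv$ must be powers $\mathbf{w}^a, \mathbf{w}^b$ of a common primitive word $\mathbf{w}$ with $a > b$, and exact overlap reduces (by injectivity of $S_\mathbf{w}^{b}$) to $S_\mathbf{w}^{a-b}|_\Lambda \equiv \mathrm{id}$; as $S_\mathbf{w}^{a-b}$ is a strict contraction with a unique fixed point, $\Lambda$ must be a singleton, whence $\Pi_\mathcal{S}(1^\infty) = \Pi_\mathcal{S}(2^\infty)$. Fact~\ref{fact:96} applied to $(1^\infty, 2^\infty)$ then produces a nearby $\tilde{\mathcal{S}}$ whose attractor contains at least two distinct points, which rules out the singleton requirement for $\tilde{\mathcal{S}}\in\mathscr{EO}_{\iiv,\jjv}$.

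The main substance of the argument lies in the reduction to Fact~\ref{fact:96}, proved separately in Subsection~\ref{sec:31} for the purposes of Lemma~\ref{lem:45}. The principal subtlety is cleanly separating the degenerate case -- where exact overlap is only possible on a singleton attractor -- from the generic case, and in each case identifying the correct necessary condition, namely coincidence of two projections with distinct first letters, that Fact~\ref{fact:96} is precisely designed to break.
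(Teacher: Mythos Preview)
Your proof is correct and follows essentially the same approach as the paper: write $\mathscr{EO}$ as a countable union of closed sets indexed by pairs of finite words and invoke Fact~\ref{fact:96} to show each has empty interior. The paper is slightly slicker in that it indexes only over pairs with $i_1\neq j_1$, so that $(\iiv^\infty,\jjv^\infty)$ already differ in the first coordinate and Fact~\ref{fact:96} applies directly---your case split on whether $\iiv^\infty=\jjv^\infty$ then becomes unnecessary.
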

\begin{proof}
We can write $\mathscr{EO}$ as the countable union
\begin{equation*}
\mathscr{EO} = \bigcup_{\stackrel{\iiv,\jjv\in\Sigma^\ast}{i_1\neq j_1}} B_{\iiv,\jjv},
\end{equation*}
where $B_{\iiv,\jjv}= \big\{\mathcal{S}\in\Theta_{\beta,\rho}^{m}(X) : S_{\iiv}\equiv S_{\jjv},\, i_1\neq j_1 \big\}$. Hence, it is enough to show that each $B_{\iiv,\jjv}$ is nowhere dense.

Since $S_{\iiv}\equiv S_{\jjv}$, we have that $\Pi_{\mathcal{S}}(\iiv^{\infty})=\Pi_{\mathcal{S}}(\jjv^\infty)$. Then Fact~\ref{fact:96} implies that for every $\varepsilon>0$ there exists an IFS $\widetilde{\mathcal{S}} \in \Theta_{\beta,\rho}^{m}(X)$ satisfying
\begin{equation*}
\Pi_{\widetilde{\mathcal{S}}}(\jjv^\infty)-\Pi_{\widetilde{\mathcal{S}}}(\iiv^\infty)>0 \;\text{ and }\; \mathrm{dist}(\mathcal{S},\widetilde{\mathcal{S}})<\varepsilon. \text{ In particular, } \widetilde{\mathcal{S}}\notin B_{\iiv,\jjv}.
\end{equation*}
Thus, the complement of $B_{\iiv,\jjv}$ is necessarily open and dense, in other words $B_{\iiv,\jjv}$ is a closed nowhere dense set.
\end{proof}
\begin{proof}[Proof of Theorem~\ref{thm:98}]
It is clear that $\Theta_{\beta,\rho}^{m}(X)=\mathscr{SSP}\bigcup\mathscr{NOSSP}$. It is easy to see that $\mathscr{SSP}$ is an open set in $\Theta_{\beta,\rho}^{m}(X)$. Thus,
the theorem follows directly from Theorem~\ref{thm:extended} and Lemma~\ref{lemma:40}.
\end{proof}

%%%%%%%%%%%%%%%%%%%%%%%%%%%%%%%%%%%%%%%%%%%%%%%%%%%%%%%%%%%%%%%%%%%%%%%%%%%
%%%%%%%%%%%%%%%%%%%%%%%%%%%%%%%%%%%%%%%%%%%%%%%%%%%%%%%%%%%%%%%%%%%%%%%%%%%
\subsection{Proof of facts in the proof of Lemma~\ref{lem:45}}\label{sec:31}
The following claim is used in the proof of Fact \ref{fact:99}.

\begin{claim}\label{cv93}
Let $\mathcal{S}=(S_1, \dots ,S_m)\in \Theta_{\beta,\rho}^{m}(X)$.
Using the notation of Definition \ref{def:95}
  and  the bounded distortion constants $C_0$ introduced in \eqref{eq:96} we have
  \begin{equation*}\label{cv91}
  \frac{\mathrm{diam}( S_{\iiv^-}(X))}{\mathrm{diam}( S_{\iiv}(X))}
   >
   1+C_{0}^{-1} \cdot \frac{1-\rho}{2\rho}=:\tau,\quad
   \forall \iiv\in\Sigma^*.
  \end{equation*}
\end{claim}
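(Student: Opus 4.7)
The plan is to factor $S_\iiv = S_{\iiv^-}\circ S_{i_n}$ with $i_n$ the last symbol of $\iiv$, and then measure how much bigger $S_{\iiv^-}(X)$ is compared to its subimage $S_{\iiv^-}(S_{i_n}(X)) = S_\iiv(X)$ by comparing lengths of the complementary ``gaps'' inside $X$ and then pushing them through $S_{\iiv^-}$ via the mean value theorem, controlling the non-linearity of $S_{\iiv^-}$ by the bounded distortion constant $C_0$ from \eqref{eq:96}.

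More concretely, write $X=[a,b]$ and $S_{i_n}(X)=[c,d] \subseteq [a,b]$. Since $\sup_X |S_{i_n}'|\leq \rho$, we have $d-c \leq \rho(b-a)$, so
\[
(b-d)+(c-a) \;=\; (b-a)-(d-c) \;\geq\; (1-\rho)(b-a) \;\geq\; \frac{1-\rho}{\rho}(d-c).
\]
Assuming for the moment that $|\iiv|\geq 2$, I would then apply the mean value theorem to the (monotone) conformal map $S_{\iiv^-}$ on the three subintervals $[a,c]$, $[c,d]$, $[d,b]$ to obtain
\[
\mathrm{diam}(S_{\iiv^-}(X)) - \mathrm{diam}(S_\iiv(X)) \;\geq\; \min_{z\in X}|S_{\iiv^-}'(z)|\cdot\bigl((b-d)+(c-a)\bigr),
\]
while
\[
\mathrm{diam}(S_\iiv(X)) \;\leq\; \max_{z\in X}|S_{\iiv^-}'(z)|\cdot (d-c).
\]
Dividing the two displays and using $\min|S_{\iiv^-}'|/\max|S_{\iiv^-}'|\geq C_0^{-1}$ from \eqref{eq:96}, I get
\[
\frac{\mathrm{diam}(S_{\iiv^-}(X))}{\mathrm{diam}(S_\iiv(X))} \;\geq\; 1 + C_0^{-1}\cdot\frac{1-\rho}{\rho} \;>\; 1 + C_0^{-1}\cdot\frac{1-\rho}{2\rho},
\]
where the strict inequality is immediate because $(1-\rho)/\rho > (1-\rho)/(2\rho)$, which is precisely the reason for the factor $2$ in the definition of $\tau$.

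The only separate case to address is $|\iiv|=1$, in which $\iiv^-$ is the empty word and we interpret $S_{\iiv^-}$ as the identity. Then $\mathrm{diam}(S_{\iiv^-}(X))/\mathrm{diam}(S_\iiv(X)) = |X|/|S_{i_1}(X)| \geq 1/\rho = 1 + (1-\rho)/\rho$, and since $C_0\geq 1$ this is again $> 1 + C_0^{-1}(1-\rho)/(2\rho)$. I do not expect a genuine obstacle here; the estimate is a routine bounded-distortion exercise, and the factor of $2$ in the denominator of $\tau$ exists precisely to ensure strict inequality uniformly in $\iiv$, including the short-word case.
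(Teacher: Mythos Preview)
Your argument is correct and follows essentially the same route as the paper: factor $S_{\iiv}=S_{\iiv^-}\circ S_{k}$, estimate the gap $X\setminus S_k(X)$, push through $S_{\iiv^-}$ via the mean value theorem, and control the distortion by $C_0$. The one minor difference is that the paper uses only a single gap interval $I\subset X\setminus S_k(X)$ chosen (by pigeonhole) with $\mathrm{diam}(I)>\tfrac{1-\rho}{2}\,\mathrm{diam}(X)$, which is the actual origin of the factor $2$ in $\tau$; you instead sum the two gap intervals and obtain the sharper bound $1+C_0^{-1}\tfrac{1-\rho}{\rho}$, from which the stated $\tau$ follows a fortiori. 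Your handling of the case $|\iiv|=1$ is fine.
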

\begin{proof}
  Let $k:=\iiv_{|\iiv|}$. That is $S_{\iiv}=S_{\iiv^-}\circ S_k$. We can choose an interval $I \subset X$ such that
  \begin{equation*}\label{cv90}
    \mathrm{int}(S_k(X))\cap \mathrm{int}(I)=\emptyset
    \mbox{ and }
    \mathrm{diam}(I)>\mathrm{diam}(X) \cdot \frac{1-\rho}{2}.
  \end{equation*}
  Hence,
 \begin{align*}
	% to remove numbering (before each equation)
	\frac{\mathrm{diam}( S_{\iiv^-}(X))}{\mathrm{diam}( S_{\iiv}(X))}    & \geq
	\frac{\mathrm{diam}( S_{\iiv^-}(S_k(X))+ \mathrm{diam}(S_{\iiv^-}(I)))}
	{\mathrm{diam}( S_{\iiv^-}(S_k(X)))}
	\\
	&= 1+
	\frac{ \mathrm{diam}(S_{\iiv^-}(I)))}
	{\mathrm{diam}( S_{\iiv^-}(S_k(X)))}
	\geq
	1+\frac{ \inf\limits_{x\in X}|S'_{\iiv^-}(x)|\cdot \mathrm{diam}(I)}
	{\|S'_{\iiv^-}\| \cdot
		\mathrm{diam}(S_k(X) ) } \\
	& >  1+
	C_{0}^{-1} \cdot \frac{\mathrm{diam}(X) \cdot \frac{1-\rho}{2}}
	{\rho \cdot \mathrm{diam}(X) }.
\end{align*}
\end{proof}

\begin{proof}[Proof of Fact~\ref{fact:96}]
Let $\mathcal{G}=\{G_1,\ldots,G_m\}\in \Theta_{\beta,\rho}^{m}(X)$ be such that there exist $\mathbf{i},\mathbf{j}\in\Sigma$ with $i_1\neq j_1$ for which
\begin{equation}\label{eeq:65}
\Pi_{\mathcal{G}}(\mathbf{i})=\Pi_{\mathcal{G}}(\mathbf{j}).
\end{equation}
The infinite words $\mathbf{i}$ and $\mathbf{j}$ are fixed from now. Without loss of generality we may assume that $0<\beta < |G'_i(x)| < \rho<1$. Otherwise, if $|G'_i(x)|$ attains $\beta$ or $\rho$ for some $x\in X$ and $i\in\{1,\ldots, m\}$, then consider the IFS $\mathcal{G}_\varepsilon=\{(1-\varepsilon)G_i(x)\pm\varepsilon\frac{\beta+\rho}{2}x\}$, where $\pm$ agrees with the sign of $G_i'(x)$ and by choosing $\varepsilon$ such that $\mathrm{dist}(\mathcal{G}_\varepsilon, \mathcal{G})<\widetilde{\varepsilon}$. Using the function
\begin{equation}\label{eeq:64}
T_{\delta,y}(x):=
\begin{cases}
\varepsilon\cdot (x-(y-\delta))^4\cdot(x-(y+\delta))^4, &\text{if } x\in[y-\delta,y+\delta] \\
0, &\text{otherwise},
\end{cases}
\end{equation}
we perturb $\mathcal{G}$ to an IFS $\widetilde{\mathcal{G}}=\{\widetilde{G}_1,\ldots,\widetilde{G}_m\}$ defined by
\begin{equation}\label{eeq:63}
\widetilde{G}_k(x):=
\begin{cases}
G_{i_1}(x)+T_{\delta,\Pi_{\mathcal{G}}(\sigma \ii)}(x), &\text{if } k=i_1; \\
G_{k}(x), &\text{otherwise}.
\end{cases}
\end{equation}
Clearly, $T_{\delta,y}\in\mathcal{C}^2$ and simple calculations give
\begin{equation}\label{eeq:62}
\|T_{\delta,y}\|_{\infty} = \varepsilon \delta^8,\ \quad  \|T'_{\delta,y}\|_{\infty}\leq 8\cdot \varepsilon\delta^7 \,\text{ and }\,
\|T''_{\delta,y}\|_{\infty}\leq 54\cdot \varepsilon\delta^6.
\end{equation}
For fixed $\widetilde{\varepsilon}>0$ and $\delta>0$, the parameter $\varepsilon$ in \eqref{eeq:64} is chosen so small that
\begin{equation}\label{eeq:61}
\widetilde{\mathcal{G}}\in \Theta_{\beta,\rho}^{m}(X), \text{ i.e. } \beta\leq \big|\widetilde{G}'_{k}(x)\big| \leq\rho \text{ for every } k=1,\ldots,m \text{ and } x\!\in\! X
\end{equation}
and
\begin{equation*}
\mathrm{dist}(\mathcal{G},\widetilde{\mathcal{G}})<\widetilde{\varepsilon}
\end{equation*}
hold simultaneously. The appropriate $\delta>0$ is specified later in the proof.

For this IFS $\widetilde{\mathcal{G}}$, we claim that
\begin{equation*}
\Pi_{\widetilde{\mathcal{G}}}(\mathbf{i})-\Pi_{\widetilde{\mathcal{G}}}(\mathbf{j})>0.
\end{equation*}
First observe that \eqref{eeq:65} implies that
\begin{equation*}
\Pi_{\widetilde{\mathcal{G}}}(\mathbf{i})-\Pi_{\widetilde{\mathcal{G}}}(\mathbf{j}) = \big(\Pi_{\widetilde{\mathcal{G}}}(\mathbf{i}) - \Pi_{\mathcal{G}}(\mathbf{i})\big) + \big( \Pi_{\mathcal{G}}(\mathbf{j}) -\Pi_{\widetilde{\mathcal{G}}}(\mathbf{j}) \big).
\end{equation*}
Since $j_1\neq i_1$, using the definition \eqref{eeq:63} of $\widetilde{\mathcal{G}}$, we bound
\begin{align*}
\big|\Pi_{\mathcal{G}}(\mathbf{j}) -\Pi_{\widetilde{\mathcal{G}}}(\mathbf{j})\big| &= \big| G_{j_1}\big( \Pi_{\mathcal{G}}(\sigma\mathbf{j}) \big) - \widetilde{G}_{j_1}\big( \Pi_{\widetilde{\mathcal{G}}}(\sigma\mathbf{j}) \big)\big| \\
& = \big| G_{j_1}\big( \Pi_{\mathcal{G}}(\sigma\mathbf{j}) \big) - G_{j_1}\big( \Pi_{\widetilde{\mathcal{G}}}(\sigma\mathbf{j}) \big)\big| \leq \rho\cdot \big|\Pi_{\mathcal{G}}(\sigma\mathbf{j}) -\Pi_{\widetilde{\mathcal{G}}}(\sigma\mathbf{j})\big|.
\end{align*}
Similarly,
\begin{align*}
\Pi_{\mathcal{G}}(\mathbf{i}) -\Pi_{\widetilde{\mathcal{G}}}(\mathbf{i}) &=  T_{\delta,\Pi_{\mathcal{G}}(\sigma \ii)}( \Pi_{\mathcal{G}}\big(\sigma \ii) \big) + \widetilde{G}_{i_1}\big( \Pi_{\mathcal{G}}(\sigma\mathbf{i}) \big) - \widetilde{G}_{i_1}\big( \Pi_{\widetilde{\mathcal{G}}}(\sigma\mathbf{i}) \big) \\
&\geq \varepsilon \delta^8 - \rho \big|\Pi_{\mathcal{G}}(\sigma\mathbf{i}) -\Pi_{\widetilde{\mathcal{G}}}(\sigma\mathbf{i})\big|,
\end{align*}
where we used \eqref{eeq:62} and \eqref{eeq:61}. Hence,
\begin{equation}\label{eeq:60}
\Pi_{\widetilde{\mathcal{G}}}(\mathbf{i})-\Pi_{\widetilde{\mathcal{G}}}(\mathbf{j}) \geq \varepsilon \delta^8 -\rho\cdot \left( \big|\Pi_{\mathcal{G}}(\sigma\mathbf{i}) -\Pi_{\widetilde{\mathcal{G}}}(\sigma\mathbf{i})\big| + \big|\Pi_{\mathcal{G}}(\sigma\mathbf{j}) -\Pi_{\widetilde{\mathcal{G}}}(\sigma\mathbf{j})\big| \right).
\end{equation}
Also observe that by a simple induction argument, for every $\pmb{\omega}\in\Sigma$
\begin{equation}\label{eeq:59}
\big|\Pi_{\mathcal{G}}(\pmb{\omega}) -\Pi_{\widetilde{\mathcal{G}}}(\pmb{\omega})\big| \leq \varepsilon\cdot \delta^8+\rho \cdot \big|\Pi_{\mathcal{G}}(\sigma\pmb{\omega}) -\Pi_{\widetilde{\mathcal{G}}}(\sigma\pmb{\omega})\big| \leq \frac{\varepsilon\cdot \delta^8}{1-\rho}\, .
\end{equation}

To continue, we distinguish between four different cases. \\
\textbf{Case I.} Assume that for every $\ell\geq 2$
\begin{equation*}
\Pi_{\mathcal{G}}(\sigma^\ell\ii) \neq \Pi_{\mathcal{G}}(\sigma\ii) \;\text{ and }\; \Pi_{\mathcal{G}}(\sigma^\ell\jj) \neq \Pi_{\mathcal{G}}(\sigma\ii).
\end{equation*}
Choose $L\geq 2$ to be the smallest integer such that
\begin{equation*}
1-\rho-2\rho^L>0
\end{equation*}
and let
\begin{equation*}
\delta:= \frac{1}{2}\, \min\left\{ \big|\Pi_{\mathcal{G}}(\sigma^\ell\ii) - \Pi_{\mathcal{G}}(\sigma\ii)\big|,\, \big|\Pi_{\mathcal{G}}(\sigma^\ell\jj) - \Pi_{\mathcal{G}}(\sigma\ii)\big|:\, \ell=2,\ldots,L\right\}>0.
\end{equation*}
This choice of $\delta$ implies that $\widetilde{G}_{i_\ell}(\Pi_{\mathcal{G}}(\sigma^{\ell}\ii))= G_{i_\ell}(\Pi_{\mathcal{G}}(\sigma^{\ell}\ii))$ and $\widetilde{G}_{j_\ell}(\Pi_{\mathcal{G}}(\sigma^{\ell}\jj))= G_{j_\ell}(\Pi_{\mathcal{G}}(\sigma^{\ell}\jj))$ for every $\ell=2,\ldots,L$. Therefore, induction immediately gives
\begin{align*}
\big|\Pi_{\mathcal{G}}(\sigma\ii) -\Pi_{\widetilde{\mathcal{G}}}(\sigma\ii)\big| &\leq \rho\cdot \big|\Pi_{\mathcal{G}}(\sigma^2\ii) -\Pi_{\widetilde{\mathcal{G}}}(\sigma^2\ii)\big| \leq \ldots \\
&\leq \rho^{L-1}\cdot \big|\Pi_{\mathcal{G}}(\sigma^L\ii) -\Pi_{\widetilde{\mathcal{G}}}(\sigma^L\ii)\big| \leq \rho^{L-1}\frac{\varepsilon\cdot \delta^8}{1-\rho}\, ,
\end{align*}
where we used \eqref{eeq:59} in the last step. The same is true for $\big|\Pi_{\mathcal{G}}(\sigma\jj) -\Pi_{\widetilde{\mathcal{G}}}(\sigma\jj)\big|$. Plugging this back into \eqref{eeq:60} yields
\begin{equation*}
\Pi_{\widetilde{\mathcal{G}}}(\mathbf{i})-\Pi_{\widetilde{\mathcal{G}}}
(\mathbf{j}) \geq \varepsilon \delta^8 -2 \rho^L \frac{\varepsilon\cdot \delta^8}{1-\rho} = (1-\rho-2\rho^L) \frac{\varepsilon\cdot \delta^8}{1-\rho} >0
\end{equation*}
by our choice of $L$. This concludes the proof in Case I. \\
\textbf{Case II.} Assume that $L\geq 2$ is the smallest integer such that
\begin{equation*}
\Pi_{\mathcal{G}}(\sigma^L\ii) = \Pi_{\mathcal{G}}(\sigma\ii),
\end{equation*}
i.e. $\Pi_{\mathcal{G}}(\sigma\ii)$ is a fixed point of $G_{i_2}\circ\dots\circ G_{i_L}$. Thus, without loss of generality we can redefine $\ii=i_1(i_2\ldots i_L)^\infty$, where $(i_2\ldots i_L)^\infty\in\Sigma$ is obtained by concatenating $(i_2\ldots i_L)$ after each other infinitely many times. Also assume that for every $n\geq 2$
\begin{equation*}
\Pi_{\mathcal{G}}(\sigma^n\jj) \neq \Pi_{\mathcal{G}}(\sigma\ii).
\end{equation*}
Choose $N\geq 2$ to be the smallest integer such that
\begin{equation}\label{eeq:56}
\frac{1}{1+\rho^{L-1}}-\frac{\rho^N}{1-\rho}>0.
\end{equation}
Let
\begin{equation*}
\delta:= \frac{1}{2}\; \min_{\substack{2\leq\ell\leq L-1 \\ 2\leq n\leq N}}\,\left\{ \big|\Pi_{\mathcal{G}}(\sigma^\ell\ii) - \Pi_{\mathcal{G}}(\sigma\ii)\big|,\, \big|\Pi_{\mathcal{G}}(\sigma^n\jj) - \Pi_{\mathcal{G}}(\sigma\ii)\big|\right\}>0.
\end{equation*}

This choice of $\delta$ implies that $\widetilde{G}_{j_n}(\Pi_{\mathcal{G}}(\sigma^{n}\jj))= G_{j_n}(\Pi_{\mathcal{G}}(\sigma^{n}\jj))$ for every $n=2,\ldots,N$. Therefore, using the same argument as in Case I we bound
\begin{equation*}
\big|\Pi_{\mathcal{G}}(\jj) -\Pi_{\widetilde{\mathcal{G}}}(\jj)\big| \leq \rho^N \frac{\varepsilon\cdot\delta^8}{1-\rho}.
\end{equation*}

To bound $\big|\Pi_{\mathcal{G}}(\ii) -\Pi_{\widetilde{\mathcal{G}}}(\ii)\big|$ from below, first assume that $i_L\neq i_1$. Then the choice of $\delta$ and $i_L\neq i_1$ implies that $\widetilde{G}_{i_\ell}(\Pi_{\mathcal{G}}(\sigma^{\ell}\ii))= G_{i_\ell}(\Pi_{\mathcal{G}}(\sigma^{\ell}\ii))$ for $\ell=2,\ldots,L$. Thus
\begin{equation*}
\Pi_{\mathcal{G}}(\sigma\ii) -\Pi_{\widetilde{\mathcal{G}}}(\sigma\ii) =\widetilde{G}_{i_2}\circ\dots\circ \widetilde{G}_{i_L}\big(\Pi_{\mathcal{G}}(\sigma\ii)\big) -
\widetilde{G}_{i_2}\circ\dots\circ \widetilde{G}_{i_L}\big(\Pi_{\widetilde{\mathcal{G}}}(\sigma\ii)\big),
\end{equation*}
where we used that $\sigma\ii=(i_2\ldots i_L)^\infty$. Since $\widetilde{G}_{i_2}\circ\dots\circ \widetilde{G}_{i_L}$ is a monotone strict contraction, we obtain that $\Pi_{\mathcal{G}}(\sigma\ii)=\Pi_{\widetilde{\mathcal{G}}}(\sigma\ii)$. As a result
\begin{equation*}
\Pi_{\widetilde{\mathcal{G}}}(\mathbf{i}) -\Pi_{\mathcal{G}}(\mathbf{i}) =  T_{\delta,\Pi_{\mathcal{G}}(\sigma \ii)}( \Pi_{\mathcal{G}}\big(\sigma \ii) \big) + G_{i_1}\big( \Pi_{\mathcal{G}}(\sigma\mathbf{i}) \big) - G_{i_1}\big( \Pi_{\widetilde{\mathcal{G}}}(\sigma\mathbf{i}) \big) = \varepsilon\delta^8.
\end{equation*}
Now assume that $i_L = i_1$, i.e. $\ii=(i_1\ldots i_{L-1})^\infty$. Then
\begin{align*}
\Pi_{\widetilde{\mathcal{G}}}(\ii) &-\Pi_{\mathcal{G}}(\ii) \\
&=  \varepsilon\delta^8 + G_{i_1}\!\circ\dots\circ G_{i_{L-1}}\big(\Pi_{\mathcal{G}}(\sigma^{L-1}\ii)\big) -
G_{i_1}\!\circ\dots\circ G_{i_{L-1}}\big(\Pi_{\widetilde{\mathcal{G}}}(\sigma^{L-1}\ii)\big)  \\
&\geq \varepsilon\delta^8 - \rho^{L-1} \big|\Pi_{\widetilde{\mathcal{G}}}(\ii) -\Pi_{\mathcal{G}}(\ii)\big|.
\end{align*}
In summary, we showed that
\begin{equation*}
\Pi_{\widetilde{\mathcal{G}}}(\ii) -\Pi_{\widetilde{\mathcal{G}}}(\jj) \geq \left(\frac{1}{1+\rho^{L-1}} - \frac{\rho^{N}}{1-\rho}\right)\cdot \varepsilon\delta^8 >0
\end{equation*}
by \eqref{eeq:56}. This concludes the proof in Case II. \\
\textbf{Case III.} Assume that $L\geq 2$ is the smallest integer such that
\begin{equation*}
\Pi_{\mathcal{G}}(\sigma^L\ii) = \Pi_{\mathcal{G}}(\sigma\ii),
\end{equation*}
and there also exists an $M$ such that
\begin{equation*}
\Pi_{\mathcal{G}}(\sigma^M\jj) = \Pi_{\mathcal{G}}(\sigma\ii).
\end{equation*}
We may assume in this case without loss of generality that  for any $N\geq 2$ satisfying $\Pi_{\mathcal{G}}(\sigma^N\jj) = \Pi_{\mathcal{G}}(\sigma\ii)$, we have that $j_N\neq i_1$.

Indeed, if $M$ can be chosen such that $j_M=i_1$, then $\Pi_{\mathcal{G}}(\sigma^{M-1}\jj)=\Pi_{\mathcal{G}}(\ii)$ which equals $\Pi_{\mathcal{G}}(\jj)$ by \eqref{eeq:65}. Hence, $\Pi_{\mathcal{G}}(\jj)$ is a fixed point of $G_{j_1}\circ\dots\circ G_{j_{M-1}}$. If $M$ is the smallest such index then without loss of generality we may redefine $\jj=(j_1\ldots j_{M-1})^\infty$ and in this case, for any $N\geq 2$ satisfying $\Pi_{\mathcal{G}}(\sigma^N\jj) = \Pi_{\mathcal{G}}(\sigma\ii)$, we have that $j_N\neq i_1$.

As in Case II, choose $N\geq 2$ to be the smallest integer such that
\begin{equation*}
\frac{1}{1+\rho^{L-1}}-\frac{\rho^N}{1-\rho}>0.
\end{equation*}
Let
\begin{align*}
\delta:= \frac{1}{2}\min \big\{ &\big|\Pi_{\mathcal{G}}(\sigma^\ell\ii) - \Pi_{\mathcal{G}}(\sigma\ii)\big|,\, \big|\Pi_{\mathcal{G}}(\sigma^n\jj) - \Pi_{\mathcal{G}}(\sigma\ii)\big|: \\
& 2\leq\ell\leq L-1,\, 1\leq n\leq N \,\text{ and }\, \Pi_{\mathcal{G}}(\sigma^n\jj) \neq \Pi_{\mathcal{G}}(\sigma\ii)\big\}>0.
\end{align*}
The argument to bound $\big|\Pi_{\mathcal{G}}(\ii) -\Pi_{\widetilde{\mathcal{G}}}(\ii)\big|$ from below is the same as in Case~II. It remains to bound $\big|\Pi_{\mathcal{G}}(\jj) -\Pi_{\widetilde{\mathcal{G}}}(\jj)\big|$.

The choice of $\delta$ and the fact that $j_n\neq i_1$ whenever $\Pi_{\mathcal{G}}(\sigma^n\jj) = \Pi_{\mathcal{G}}(\sigma\ii)$, implies that $\widetilde{G}_{j_n}(\Pi_{\mathcal{G}}(\sigma^{n}\jj))= G_{j_n}(\Pi_{\mathcal{G}}(\sigma^{n}\jj))$ for $n=1,\ldots,N$. Hence, using \eqref{eeq:59}, we obtain that
\begin{equation*}
\big|\Pi_{\mathcal{G}}(\jj) -\Pi_{\widetilde{\mathcal{G}}}(\jj)\big| =  \big|G_{j_1\ldots j_N}\big(\Pi_{\mathcal{G}}(\sigma^{N}\jj)\big) -
G_{j_1\ldots j_N}\big(\Pi_{\widetilde{\mathcal{G}}}(\sigma^{N}\jj)\big) \big| \leq \rho^N \frac{\varepsilon\cdot\delta^8}{1-\rho}.
\end{equation*}
The conclusion in Case III is the same as in Case II:
\begin{equation*}
\Pi_{\widetilde{\mathcal{G}}}(\ii) -\Pi_{\widetilde{\mathcal{G}}}(\jj) \geq \left(\frac{1}{1+\rho^{L-1}} - \frac{\rho^{N}}{1-\rho}\right)\cdot \varepsilon\delta^8 >0.
\end{equation*}
\textbf{Case IV.} Finally, assume that for every $\ell\geq 2$
\begin{equation*}
\Pi_{\mathcal{G}}(\sigma^\ell\ii) \neq \Pi_{\mathcal{G}}(\sigma\ii)
\end{equation*}
and there exists an $M$ such that
\begin{equation*}
\Pi_{\mathcal{G}}(\sigma^M\jj) = \Pi_{\mathcal{G}}(\sigma\ii).
\end{equation*}
Again, similarly to Case III, we may assume without loss of generality that for every $N\geq2$ with $\Pi_{\mathcal{G}}(\sigma^N\jj) = \Pi_{\mathcal{G}}(\sigma\ii)$, we have that $j_N\neq i_1$.
Choose $L\geq 2$ to be the smallest integer such that
\begin{equation*}
1-\rho-2\rho^L>0.
\end{equation*}
Let
\begin{align*}
\delta:= \frac{1}{2}\min \big\{ &\big|\Pi_{\mathcal{G}}(\sigma^\ell\ii) - \Pi_{\mathcal{G}}(\sigma\ii)\big|,\, \big|\Pi_{\mathcal{G}}(\sigma^n\jj) - \Pi_{\mathcal{G}}(\sigma\ii)\big|: \\
& 2\leq\ell,n\leq L \,\text{ and }\, \Pi_{\mathcal{G}}(\sigma^n\jj) \neq \Pi_{\mathcal{G}}(\sigma\ii)\big\}>0.
\end{align*}
Similarly to Case III, we obtain that
\begin{equation*}
\big|\Pi_{\mathcal{G}}(\jj) -\Pi_{\widetilde{\mathcal{G}}}(\jj)\big| =  \big|G_{j_1\ldots j_N}\big(\Pi_{\mathcal{G}}(\sigma^{N}\jj)\big) -
G_{j_1\ldots j_N}\big(\Pi_{\widetilde{\mathcal{G}}}(\sigma^{N}\jj)\big) \big| \leq \rho^L \frac{\varepsilon\cdot\delta^8}{1-\rho}.
\end{equation*}
On the other hand, similarly to Case I,
\begin{equation*}
\big|\Pi_{\mathcal{G}}(\sigma\ii) -\Pi_{\widetilde{\mathcal{G}}}(\sigma\ii)\big|\leq \rho^{L-1}\frac{\varepsilon\cdot \delta^8}{1-\rho}\, ,
\end{equation*}
and so
\begin{align*}
\Pi_{\widetilde{\mathcal{G}}}(\mathbf{i})-\Pi_{\widetilde{\mathcal{G}}}(\mathbf{j}) & \geq \varepsilon \delta^8 -\rho\cdot\big|\Pi_{\mathcal{G}}(\sigma\mathbf{i}) -\Pi_{\widetilde{\mathcal{G}}}(\sigma\mathbf{i})\big| - \big|\Pi_{\mathcal{G}}(\mathbf{j}) -\Pi_{\widetilde{\mathcal{G}}}(\mathbf{j})\big| \\
& \geq  (1-\rho-2\rho^L) \frac{\varepsilon\cdot \delta^8}{1-\rho} >0 .
\end{align*}

%By redefining $\widetilde{\mathcal{G}}=\{\widetilde{G}_1,\ldots,\widetilde{G}_m\}$ to be perturbed around $\Pi_{\mathcal{G}}(\sigma \jj)$
%\begin{equation*}
%\widetilde{G}_k(x):=
%\begin{cases}
%G_{j_1}(x)-T_{\delta,\Pi_{\mathcal{G}}(\sigma \jj)}(x), &\text{if } k=j_1; \\
%G_{k}(x), &\text{otherwise,}
%\end{cases}
%\end{equation*}
%we can interchange the role of $\ii$ and $\jj$, and argue exactly as in Case II.
\end{proof}

\begin{proof}[Proof of Fact~\ref{fact:97}]
Recall, $\mathcal{S}^\alpha=\alpha\mathcal{G}+(1-\alpha)\widetilde{\mathcal{G}}$, where $\mathcal{G},\widetilde{\mathcal{G}}\in\Theta_{\beta,\rho}^{m}(X)$ are as in Fact~\ref{fact:96}. The definition \eqref{eeq:63} of $\widetilde{G}_i$ implies that $G'_i(x)\cdot(\widetilde{G}_i)'(x)>0$ for all $x\in X$ and $i\in\{1,\ldots,m\}$. Hence, $\mathcal{S}^\alpha \in\Theta_{\beta,\rho}^{m}(X)$ for every $\alpha\in[0,1]$.

%\begin{figure}[h]
%\includegraphics[scale=0.12, angle=90]{CommonFixedPoint}
%\caption{Finding the common fixed point $\widetilde{x}$ at $\alpha^\ast$.}
%\end{figure}

\begin{figure}[h]
\begin{tikzpicture}[scale=0.65,line cap=round,line join=round,x=1cm,y=1cm]
%\draw [color=cqcqcq,, xstep=1cm,ystep=1cm] (-0.1,-1.3) grid (20.1,7.1);
\clip(-0.3,-1.3) rectangle (20.3,7.3);

%pirosak
\draw [line width=2pt,color=ccqqqq] (1,0.6)-- (6.6,0.6);
\draw [color=ccqqqq](1,0.6) node[anchor=east] {$I_{\mathbf i|k}^1$};
\draw [line width=2pt,color=ccqqqq] (1,0.7)-- (1,0.5);
\draw [line width=2pt,color=ccqqqq] (6.6,0.7)-- (6.6,0.5);

\draw [line width=2pt,color=ccqqqq] (5,5.2)-- (6,5.2);
\draw [color=ccqqqq](6,5.2) node[anchor=west] {$I_{\pmb{\omega}}^1$};
\draw [line width=2pt,color=ccqqqq] (5,5.3)-- (5,5.1);
\draw [line width=2pt,color=ccqqqq] (6,5.3)-- (6,5.1);

\draw [line width=2pt,color=ccqqqq] (14.2,2.6)-- (15.2,2.6);
\draw [color=ccqqqq](14.2,2.6) node[anchor=east] {$I_{\pmb{\omega}}^0$};
\draw [line width=2pt,color=ccqqqq] (14.2,2.7)-- (14.2,2.5);
\draw [line width=2pt,color=ccqqqq] (15.2,2.7)-- (15.2,2.5);

\draw [line width=2pt,color=ccqqqq] (11.6,0.6)-- (15.4,0.6);
\draw [color=ccqqqq](11.6,0.6) node[anchor=east] {$I_{\mathbf i|k}^0$};
\draw [line width=2pt,color=ccqqqq] (11.6,0.7)-- (11.6,0.5);
\draw [line width=2pt,color=ccqqqq] (15.4,0.7)-- (15.4,0.5);

\draw [color=ccqqqq](5.8,6) node[anchor=south] {$i(1)$};
\draw [color=ccqqqq](14.8,6) node[anchor=south] {$i(0)$};
\draw [line width=1.3pt,dash pattern=on 2pt off 3pt,color=ccqqqq] (14.8,6)-- (14.8,0);
\draw [line width=1.3pt,dash pattern=on 2pt off 3pt,color=ccqqqq] (5.8,6)-- (5.8,0);

\draw [line width=1.2pt, color=ccqqqq] (5.8,5.2) .. controls (8,3.2) and (12,4.8) .. node[pos=0.3, above] {$\Pi^{\alpha}(\pmb{\omega}^{\infty})$} (14.8,2.6);

%kekek
\draw [color=qqqqff](6.4,1.2) node[anchor= west] {$I_{\mathbf j|k}^1$};
\draw [line width=2pt,color=qqqqff] (1.6,1.2)-- (6.4,1.2);
\draw [line width=2pt,color=qqqqff] (6.4,1.3)-- (6.4,1.1);
\draw [line width=2pt,color=qqqqff] (1.6,1.3)-- (1.6,1.1);

\draw [color=qqqqff](19.2,1.2) node[anchor= west] {$I_{\mathbf j|k}^0$};
\draw [line width=2pt,color=qqqqff] (15.8,1.2)-- (19.2,1.2);
\draw [line width=2pt,color=qqqqff] (15.8,1.3)-- (15.8,1.1);
\draw [line width=2pt,color=qqqqff] (19.2,1.3)-- (19.2,1.1);

\draw [color=qqqqff](2.2,5.2) node[anchor=east] {$I_{\pmb{\tau}}^1$};
\draw [line width=2pt,color=qqqqff] (2.2,5.2)-- (3,5.2);
\draw [line width=2pt,color=qqqqff] (3,5.3)-- (3,5.1);
\draw [line width=2pt,color=qqqqff] (2.2,5.3)-- (2.2,5.1);

\draw [color=qqqqff](17,2.6) node[anchor=west] {$I_{\pmb{\tau}}^0$};
\draw [line width=2pt,color=qqqqff] (16.2,2.6)-- (17,2.6);
\draw [line width=2pt,color=qqqqff] (16.2,2.7)-- (16.2,2.5);
\draw [line width=2pt,color=qqqqff] (17,2.7)-- (17,2.5);

\draw [color=qqqqff](2.4,6) node[anchor=south] {$j(1)$};
\draw [color=qqqqff](16.4,6) node[anchor=south] {$j(0)$};
\draw [line width=1.3pt,dash pattern=on 2pt off 3pt,color=qqqqff] (2.4,6)-- (2.4,0);
\draw [line width=1.3pt,dash pattern=on 2pt off 3pt,color=qqqqff] (16.4,6)-- (16.4,0);

\draw [line width=1.2pt, color=qqqqff] (2.4,5.2) .. controls (7,2.7) and (14,5.3) .. node[pos=0.72, above] {$\Pi^{\alpha}(\pmb{\tau}^{\infty})$} (16.4,2.6);

%feketek
\draw [line width=2pt] (0,0)-- (20,0);
\draw [line width=2pt] (0,6)-- (20,6);
\draw [line width=2pt] (4,-0.15)-- (4,0.15);
\draw (4,-0.15) node[anchor=north] {$\Pi_{\mathcal G}(\mathbf i) = \Pi_{\mathcal G}(\mathbf j)$};
\draw [line width=2pt] (14,0.15)-- (14,-0.15);
\draw (14,-0.15) node[anchor=north] {$\Pi_{\widetilde{\mathcal G}}(\mathbf i)$};
\draw [line width=2pt] (17.6,0.15)-- (17.6,-0.15);
\draw (17.6,-0.15) node[anchor=north] {$\Pi_{\widetilde{\mathcal G}}(\mathbf j)$};
\draw [line width=2pt] (10,0.15)-- (10,-0.15);
\draw (10,-0.15) node[anchor=north] {$\widetilde{x}$};

\draw (10,6) node[anchor=south] {$i(\alpha^*)=j(\alpha^*)$};
\draw [line width=1.3pt,dash pattern=on 2pt off 3pt] (10,6)-- (10,0);
\filldraw (10,4) circle [radius=6pt];
\end{tikzpicture}
\caption{Finding the common fixed point $\widetilde{x}$ at $\alpha^\ast$.}
\label{fig:CommonFixPt}
\end{figure}
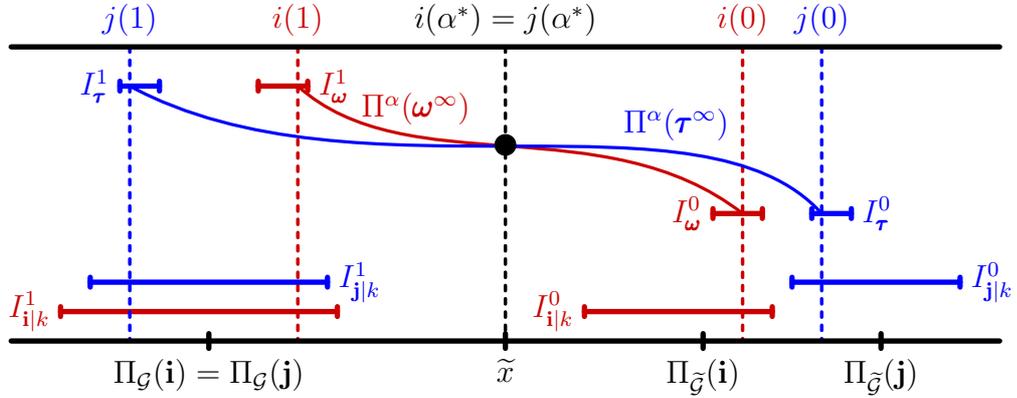

We denote the interval spanned by the attractor $\Lambda^{\alpha}$ of $\mathcal{S}^\alpha$ by $I^\alpha$ and the natural projection by $\Pi^{\alpha}$. Moreover,	for a $\mathbf{u}\in\Sigma^{\ast}$ and $\alpha\in[0,1]$ we write
$$I^\alpha_{\mathbf{u}}:=S^\alpha_{\mathbf{u}}(I^\alpha).$$
The left and right endpoint of $I^\alpha_{\mathbf{u}}$ is denoted by $I^{\alpha,-}_{\mathbf{u}}$ and  $I^{\alpha,+}_{\mathbf{u}}$, respectively. Without loss of generality we may assume that $\Pi_{\widetilde{\mathcal{G}}}(\ii) < \Pi_{\widetilde{\mathcal{G}}}(\jj)$. Hence, we can choose $k$ so large that
\begin{equation}\label{eeq:76}
I_{\ii|k}^{0,+} < I_{\jj|k}^{0,-},
\end{equation}
see Figure~\ref{fig:CommonFixPt}. Since $\Pi_{\mathcal{G}}(\mathbf{i})=\Pi_{\mathcal{G}}(\mathbf{j})$, we have that $I_{\jj|k}^{1,-} \leq I_{\ii|k}^{1,+}$ (otherwise $I_{\jj|k}^{1} \cap I_{\ii|k}^{1}=\emptyset$).
First assume that
\begin{equation}\label{eeq:78}
I_{\jj|k}^{1,-} < I_{\ii|k}^{1,+}.
\end{equation}
Then we can choose $\pmb{\omega},\pmb{\tau}\in\Sigma^{\ast}$ such that $\pmb{\omega}|k=\ii|k$ and $\pmb{\tau}|k=\jj|k$ satisfying
\begin{equation}\label{eeq:75}
I_{\pmb{\tau}}^{1,+}<I_{\pmb{\omega}}^{1,-}	\mbox{ and } \omega_{|\pmb{\omega}|}\neq \tau_{|\pmb{\tau}|}.
\end{equation}
On the other hand, \eqref{eeq:76} implies that $I_{\pmb{\omega}}^{0,+}<I_{\pmb{\tau}}^{0,-}$. Combining this with \eqref{eeq:75} yields that
\begin{equation}\label{eeq:74}
i(0)<j(0) \mbox{ and } i(1)>j(1),
\end{equation}
where $i(\alpha):=\Pi^\alpha(\pmb{\omega}^{\infty })\in I_{\pmb{\omega}}^{\alpha}$ and $j(\alpha):=\Pi^\alpha(\pmb{\tau}^{\infty })\in I_{\pmb{\tau}}^{\alpha}$ denotes the fixed point of $S_{\pmb{\omega}}^{\alpha}$ and $S_{\pmb{\tau}}^{\alpha}$, respectively.

It follows from the definition of $\mathcal{S}^\alpha$ that for every $\mathbf{u}\in\Sigma$ the function $\alpha\mapsto \Pi^\alpha(\mathbf{u})$ is continuous. In particular, for $\pmb{\omega}^{\infty }$ and $\pmb{\tau}^{\infty }$ this and \eqref{eeq:74} implies that there is an $\alpha^\ast\in[0,1]$ such that
\begin{equation*}
\widetilde{x}:=\Pi^{\alpha^\ast}(\pmb{\omega}^{\infty })=  i(\alpha^\ast)=j(\alpha^\ast)=\Pi^{\alpha^\ast}(\pmb{\tau}^{\infty }).
\end{equation*}
Thus, $\widetilde{x}$ is a common fixed point of $S_{\pmb{\omega}}^{\alpha^\ast}$ and $  S_{\pmb{\tau}}^{\alpha^\ast}$. Moreover, with the last part of \eqref{eeq:75}, we showed that $\mathcal{S}^{\alpha^\ast}\in\mathscr{U}$ provided \eqref{eeq:78} holds (recall \eqref{eeq:82}).

If $I_{\ii|k}^{1,+} = I_{\jj|k}^{1,-}$, then either there exists $\ell<k$ such that~\eqref{eeq:78} holds with $I_{\ii|\ell}^{1,+} < I_{\jj|\ell}^{1,-}$ or for every $\ell<k$ we have $I_{\ii|\ell}^{1,+} = I_{\jj|\ell}^{1,-}$. In particular, $I_{i_1}^{1,+} = I_{j_1}^{1,-}$. In this case we choose $\widetilde{\mathcal G}$ from Fact~\ref{fact:96} so that $\Pi_{\mathcal{\widetilde{G}}}(\ii)>\Pi_{\mathcal{\widetilde{ G}}}(\jj)$ and the same argument applies.
\end{proof}

\begin{proof}[Proof of Fact~\ref{fact:98}]
By assumption $\mathcal{S}\in\mathscr{U}$, i.e. there exist $\pmb{\omega}=(\omega_1,\ldots,\omega_k)$ and $\pmb{\tau}=(\tau_1,\ldots,\tau_\ell)\in\Sigma^*$ with $\omega_1\neq\tau_1,\, \omega_{k}\neq
\tau_{\ell}$ and $\widetilde{x}\in X$ such that $$\widetilde{x}=S_{\pmb{\omega}}(\widetilde{x})
=S_{\pmb{\tau}}(\widetilde{x}).$$

For $p\in\left\{2, \dots ,k\right\}$ and $q \in\left\{2, \dots ,\ell \right\}$ let
	$$
	y_p:=S_{\omega_{p} \dots \omega_k}(\widetilde{x}) \mbox{ and }
	z_q:=S_{\tau_q \dots \tau_\ell }(\widetilde{x}).
	$$
Using the function
	$$
	L(x):=\prod\limits_{p=2}^{k}
	\left(x-y_p\right)^2 \cdot
	\prod\limits_{q=2}^{\ell }
	\left(x-z_q\right)^2,
	$$
we define the IFS $\widetilde{\mathcal{S}}=\big\{\widetilde{S}_1,\ldots,\widetilde{S}_m\big\}$ to be
	$$
	\widetilde{S}_i(x):=
	\left\{
	\begin{array}{ll}
	S_i(x)+\varepsilon \cdot  L(x) \cdot (x-\widetilde{x})^2, & \hbox{if $i\neq \omega_k$;} \\
	S_i(x)+\varepsilon \cdot  L(x) \cdot (x-\widetilde{x}), & \hbox{if $i=\omega_k$.}
	\end{array}
	\right.
	$$
Then $\widetilde{x}$ is still a common fixed point:
	\begin{equation}\label{eeq:90}
	\widetilde{x}=\widetilde{S}_{\pmb{\omega}}(\widetilde{x})
	=\widetilde{S}_{\pmb{\tau}}(\widetilde{x}).
	\end{equation}
Since $L(y_p) = 0 = L(z_q)$ for every $p\in\left\{2, \dots ,k\right\}$ and $q \in\left\{2, \dots ,\ell \right\}$, a simple induction shows that
\begin{equation}\label{eeq:88}
\widetilde{S}_{\omega_{p} \dots \omega_k}(\widetilde{x}) = y_p \,\mbox{ and }\,
\widetilde{S}_{\tau_q \dots \tau_\ell }(\widetilde{x}) = z_q.
\end{equation}
As for the derivatives
	\begin{equation*}
	\big(\widetilde{S}_i\big)'(x)
	=
	\left\{
	\begin{array}{ll}
	S'_i(x)+\varepsilon L'(x)(x-\widetilde{x})^2+2\varepsilon L(x)(x-\widetilde{x}), & \hbox{if $i\neq \omega_k$;} \\
	S'_i(x)+\varepsilon L'(x)(x-\widetilde{x})+\varepsilon L(x), & \hbox{if $i=\omega_k$.}
	\end{array}
	\right.
	\end{equation*}
Similarly, $L'(y_p)=0=L'(z_q)$ for all $p$ and $q$ implies that for all $i\in\{1,\ldots,m\}$
	\begin{equation}\label{eeq:87}
	\big(\widetilde{S}_i\big)'(t)=S'_i(t) \,\mbox{ if } t\in\left\{
	y_2, \dots ,y_k,z_2, \dots ,z_{\ell}
	\right\},
	\end{equation}
furthermore, $L(\widetilde{x})\neq 0$ implies that
	\begin{equation}\label{eeq:86}
	\big(\widetilde{S}_i\big)'(\widetilde{x})=
	\left\{
	\begin{array}{ll}
	S'_i(\widetilde{x}), & \hbox{if $i\neq \omega_k;$} \\
	S'_i(\widetilde{x})+\varepsilon \cdot  L(\widetilde{x}), & \hbox{if $i=\omega_k.$}
	\end{array}
	\right.
	\end{equation}
Hence, it follows from \eqref{eeq:88}, \eqref{eeq:87} and \eqref{eeq:86} that
	\begin{equation*}\label{eeq:85}
	\big(\widetilde{S}_{\pmb{\omega}}\big)'(\widetilde{x})
	=
	S'_{\pmb{\omega}}(\widetilde{x})
	+
	\varepsilon \cdot L(\widetilde{x}) \cdot S'_{\pmb{\omega}^-}(y_k)
	\,\mbox{ and }\,
	\big(\widetilde{S}_{\pmb{\tau}}\big)'(\widetilde{x}) =	S'_{\pmb{\tau}}(\widetilde{x}).
	\end{equation*}
Choosing $\varepsilon$ small enough, $\mathrm{dist}\big(\mathcal{S},
\widetilde{\mathcal{S}}\big)$ can be made arbitrarily small (in the metric defined in \eqref{eq:38}) and at the same time $\log| \widetilde{S}'_{\pmb{\omega}}(\widetilde{x})|/ \log|\widetilde{S}'_{\pmb{\tau}}(\widetilde{x})|\not\in\mathbb{Q}$.
This together with \eqref{eeq:90} shows that $\widetilde{S}\in\mathscr{R}$.
\end{proof}

\begin{proof}[Proof of Fact~\ref{fact:99}]
	First, let us recall the definition of $\Phi(x,r)$ from \eqref{eq:93}. Let us generalise it as follows: for $N\geq1$, let
	\begin{equation*}\label{eq:93K}
	\Phi_N(x,r):=
	\big\{
	S_{\iiv}|_\Lambda:
	\mathrm{diam}(S_{\iiv}(\Lambda))
	\leq
	r
	<
	\mathrm{diam}(S_{\iiv^{-N}}(\Lambda)),
%	\\
	S_{\iiv}(\Lambda)\cap B(x,r)\neq  \emptyset,\ \iiv\in\Sigma^{\ast}
	\big\},
	\end{equation*}
where $\iiv^{-N}$ is the prefix of $\iiv$ by removing the last $N$ symbols. Recall from~\eqref{eq:92} that the WSP holds if $\sup_{x\in\Lambda,r>0}\Phi(x,r)<\infty$, and thus, WSP implies that for every $N\geq1$, $\sup_{x\in\Lambda,r>0}\Phi_N(x,r)<\infty$. Indeed, if $f|_\Lambda\in \Phi_N(x,R)$ then for every finite word $\iiv\in\Sigma^*$ for which $S_{\iiv}|_\Lambda=f|_\Lambda$, there exists a prefix $\jjv$ of $\iiv$ such that
$$
\mathrm{diam}(S_{\iiv}(\Lambda))\leq\mathrm{diam}(S_{\jjv}(\Lambda))
\leq
r
<\mathrm{diam}(S_{\jjv^{-}}(\Lambda))\leq\mathrm{diam}(S_{\iiv^{-N}}(\Lambda)).
$$
Moreover, $S_{\iiv}(\Lambda)\cap B(x,r)\neq  \emptyset$ clearly implies that $S_{\jjv}(\Lambda)\cap B(x,r)\neq  \emptyset$. Thus, $S_{\jjv}|_\Lambda\in\Phi(x,r)$. Hence, for every $g|_\Lambda\in \Phi(x,r)$ there exist at most $1+m+\cdots+m^{N-1}$-many maps $f|_\Lambda\in \Phi_N(x,r)$ such that $f\circ S_{\hbar}|_\Lambda=g|_\Lambda$ for some $\hbar$ with $|\hbar|\leq K$, and so, $\Phi_N(x,r)\leq (m^{N}-1)/(m-1)\Phi(x,r)$ for every $x\in\Lambda$ and $r>0$.
	
Now, let $\mathcal{S}\in\mathscr{R}$ (introduced in \eqref{cv83}), i.e. there exist
$ \widetilde{x}\in X,
\, \pmb{\omega},\pmb{\tau}\in\Sigma^*$ for which $\widetilde{x}=S_{\pmb{\omega}}(\widetilde{x})
=S_{\pmb{\tau}}(\widetilde{x})$ and $\log |S'_{\pmb{\omega}}(\widetilde{x})| / \log |S'_{\pmb{\tau}}(\widetilde{x})| \not\in\mathbb{Q}$. Observe that $\widetilde{x}\in \Lambda$, since $\widetilde{x}=\Pi(\pmb{\omega}^\infty)$. We claim that $\Phi_N(\widetilde{x},R)$ can be made arbitrarily large for an appropriately chosen $R$, where $N=\max\{|\pmb{\omega}|,|\pmb{\tau}|\}$, implying that $\mathcal{S}\in\mathscr{NOWSP}$ by the discussion above.

For brevity, let us write
$$
g_1:=S_{\pmb{\omega}} \,\mbox{ and }\, g_2:=S_{\pmb{\tau}},
\qquad
a:=|g'_1(\widetilde{x})| \,\mbox{ and }\, b:=|g'_2(\widetilde{x})|.
$$
Since $\log a / \log b \not\in\mathbb{Q}$, Dirichlet's approximation theorem implies that there are infinitely many $j\in\mathbb{N}$ for which we can find $i\in\mathbb{N}$ such that
\begin{equation*}\label{eq:24}
 0< \left|
  \frac{\log a}{\log b}-\frac{i}{j}
  \right|<\frac{1}{j^2}.
\end{equation*}
After rearranging
\begin{equation}\label{eq:25}
 b^{1/j}<
 \frac{a^j}{b^i}
 <b^{-1/j} \,\mbox{ and }\, a^j\neq b^i.
\end{equation}
Let us fix such an $i,j$ pair. For $0\leq r<\sqrt{j}$ we introduce
\begin{equation}\label{cv81}
h_r:=\underbrace{g_{1}\circ\cdots\circ g_{1}}_{j\cdot(\lfloor\sqrt{j}\rfloor-r)}\circ
\underbrace{g_{2}\circ\cdots\circ g_{2}}_{i\cdot r}.
\end{equation}
Since $\widetilde{x}=g_1(\widetilde{x})=g_2(\widetilde{x})$, we get that $|h'_r(\widetilde{x})|=a^{j\cdot(\lfloor\sqrt{j}\rfloor-r)} b^{i\cdot r}$. Hence, \eqref{eq:25} implies that for every $0 \leq r_1<r_2<\sqrt{j}$
\begin{equation*}\label{eq:27}
  \frac{|h'_{r_1}(\widetilde{x})|}{|h'_{r_2}(\widetilde{x})|}
  =\left(\frac{a^j}{b^i}\right)^{r_2-r_1}
  \in
  \left(
  b^{(r_2-r_1)/j},b^{-(r_2-r_1)/j}
  \right)
   \subset
   \left(
   b^{1/\sqrt{j}},b^{-1/\sqrt{j}}
   \right).
\end{equation*}

By the Bounded Distortion Property there is a constant $C>0$ independent on $j$ such that for all $0 \leq r_1<r_2<\sqrt{j}$:
\begin{equation*}\label{eq:28}
  \frac{\mathrm{diam}(h_{r_1}(\Lambda))}
  {\mathrm{diam}(h_{r_2}(\Lambda))}\in
  \left(
  C^{-1}b^{1/\sqrt{j}},Cb^{-1/\sqrt{j}}
  \right) \subset \left(\frac{1}{2C},2C\right), \mbox{ for large } j.
\end{equation*}
Let $K:=\big\lceil\frac{\log 4C}{\log(1/\tau
)}\big\rceil$. We can construct $K$ open intervals $\left\{(w_k,z_k)\right\}_{k=1}^{K}$ such that
\begin{equation*}\label{cv99}
  \left(\frac{1}{2C},2C\right) \subset \bigcup\limits_{k =1}^{K}
  (w_k,z_k) \mbox{ and }  z_k<\tau \cdot w_k, \quad k=1, \dots ,K,
\end{equation*}
where $\tau$ was defined in Claim \ref{cv93}.
Then there exists an $\ell \leq K$ such that for
\begin{equation}\label{cv95}
\mathcal{I}:=\Big\{(r_1,r_2):
0 \leq r_1<r_2<\sqrt{j},\
\frac{\mathrm{diam}(h_{r_1}(\Lambda))}
  {\mathrm{diam}(h_{r_2}(\Lambda))}
\in (w_{\ell },z_{\ell })
\Big\},
\end{equation}
we have $\#\mathcal{I} \geq
 \binom{\lfloor\sqrt{j}\rfloor+1}{2}/K$. That is if $j $
 is large enough then
 \begin{equation}\label{cv98}
   \#\mathcal{I} \geq\frac{j}{4K}.
 \end{equation}
Now we partition the pairs contained in $\mathcal{I}$ according to their second components. That is for every $r\in\left\{0,1, \dots , \lfloor\sqrt{j}\rfloor\right\}$, we introduce the disjoint sets
$$
\mathcal{I}_r:=
\left\{r_1 :
(r_1,r)\in\mathcal{I}
\right\}.
$$
By definition $\mathcal{I}_r \subset \left\{0, \dots , \lfloor\sqrt{j}\rfloor \right\}$.
So, by \eqref{cv98} we can
fix an $r_*\in \left\{1, \dots ,\lfloor\sqrt{j}\rfloor\right\}$ such that
\begin{equation}\label{cv97}
  \#\mathcal{I}_{r_*} \geq \frac{\sqrt{j}}{4K}.
\end{equation}
We choose an $\widehat{r}\in \mathcal{I}_{r_*}$ such that
\begin{equation*}\label{cv82}
  \mathrm{diam}(h_{\widehat{r}}(\Lambda))=
  \max\left\{ \mathrm{diam}(h_{r_1}(\Lambda)):r_1\in\mathcal{I}_{r_*}\right\}
  =:
  \eta.
\end{equation*}
Observe that for every $r_1\in\mathcal{I}_{r_*}$ we have
\begin{equation}\label{cv96}
  \frac{1}{\tau} \cdot \eta \leq \mathrm{diam}(h_{r_1}(\Lambda)) \leq \eta
  = \mathrm{diam}(h_{\widehat{r}}(\Lambda)).
\end{equation}
Indeed,  let $r_1\in\mathcal{I}_{r_*}$ be arbitrary. Then $(r_1,r_*)\in\mathcal{I}$. By the definition \eqref{cv95} of $\mathcal{I}$ we have
$$
\frac{\mathrm{diam}(h_{r_1}(\Lambda))}
  {\mathrm{diam}(h_{r_*}(\Lambda))}
\in (w_{\ell },z_{\ell })
\;\mbox{ and also }\;
\frac{\mathrm{diam}(h_{\widehat{r}}(\Lambda))}
  {\mathrm{diam}(h_{r_*}(\Lambda))}
\in (w_{\ell },z_{\ell }).
$$
Using this and \eqref{cv96} we get
\begin{equation}\label{cv87}
 \frac{1}{\tau} \leq \frac{w_\ell }{z_\ell } \leq
\frac{\mathrm{diam}(h_{r_1}(\Lambda))}
  {\eta}
 \leq 1,\quad \forall r_1\in\mathcal{I}_{r_*}.
\end{equation}

We introduce $h_{r^-}$ for an $r\in \left\{0, \dots ,\lfloor\sqrt{j}\rfloor\right\}$ as follows: if $r \geq 1$ then
$$h_{r^-}:=\underbrace{g_{1}\circ\cdots\circ g_{1}}_{j\cdot(\lfloor\sqrt{j}\rfloor-r)}\circ
\underbrace{g_{2}\circ\cdots\circ g_{2}}_{i\cdot r-1}.$$ If
$r=0$ then
$h_{r^-}:=\underbrace{g_{1}\circ\cdots\circ g_{1}}_{j\cdot\lfloor\sqrt{j}\rfloor-1}$ (cf. \eqref{cv81}).
Then \eqref{cv87} and Claim \ref{cv93} imply that
\begin{equation}\label{cv86}
 \mathrm{diam}(h_{r_1}(\Lambda))  \leq \eta \;\mbox{ and }\;
  \mathrm{diam}(h_{r_1^-}(\Lambda))>\eta,\quad \forall r_1\in\mathcal{I}_{r_*}.
\end{equation}
By the definition of the mapping  $h_r,$ we get that
 for all $0\leq r_1<\sqrt{j}$
\begin{equation}\label{cv85}
  \widetilde{x}=h_{r_1}(\widetilde{x})\in h_{r_1}(\Lambda).
\end{equation}
The combination of  \eqref{cv86} and \eqref{cv85} yields that
\begin{equation}\label{cv84}
  \mathcal{I}_{r_*} \subset \Phi_N\left(\widetilde{x},\eta\right).
\end{equation}
Since $j$ can be arbitrarily large, \eqref{cv97} and \eqref{cv84} together imply that the WSP does not hold.
\end{proof}

%\begin{comment}

%%%%%%%%%%%%%%%%%%%%%%%%%%%%%%%%%%%%%%%%%%%%%%%%%%%%%%%%%%%%%%%%%%%%%%%%%%%
%%%%%%%%%%%%%%%%%%%%%%%%%%%%%%%%%%%%%%%%%%%%%%%%%%%%%%%%%%%%%%%%%%%%%%%%%%%
\section{Transversal families, proof of Theorem~\ref{thm:99}}\label{sec:05}

Let $\{\mathcal{S}^{\underline{\lambda}}: \underline{\lambda} \in B\}$ be a transversal family of self-conformal IFSs, recall Definition~\ref{def:98}. We defined the sets $\mathscr{SSP}, \mathscr{WSP}$ and $\mathscr{EO}$ as those $\underline{\lambda} \in B$ for which $\mathcal{S}^{\underline{\lambda}}$ satisfies the SSP, the WSP or has an exact overlap. Let us also introduce
\begin{equation*}%\label{eq:86}
\mathscr{OSC} :=
\big\{
\underline{\lambda} \in B:
\mathcal{S}^{\underline{\lambda} } \mbox{ satisfies the OSC}
\big\}.
\end{equation*}
We prove the assertions of Theorem~\ref{thm:99} in separate propositions.
\begin{comment}
Recall that a subset $H \subset \mathbb{R}^d$ is a very small set if
\begin{equation*}\label{eq:52}
\mathcal{L}_d(H)=0 \;\mbox{ and }\; H
\mbox{ is a set of first category}.
\end{equation*}
We will use without mentioning the trivial fact that a countable union of very small sets is very small.

Notice that $\mathscr{SSP}\cap \mathscr{EO}=\emptyset$. Hence, in light of Fact~\ref{eq:31}, to prove Theorem~\ref{thm:99} it is enough to show that
\begin{equation*}
\mathscr{EO} \text{ and } (B\setminus\mathscr{SSP})\cap\mathscr{OSC} \text{ are both very small sets.}
\end{equation*}
\end{comment}

\begin{prop}\label{lem:98}
Let $B \subset \mathbb{R}^d$ be a non-degenerate ball and let $\{\mathcal{S}^{\underline{\lambda}}: \underline{\lambda} \in B\}$ be a transversal family of self-conformal IFSs on the line. Then

\begin{equation*}
\mathscr{EO} \text{ is a set of first category and } \mathcal{L}_d(\mathscr{EO}) = 0.
\end{equation*}

\end{prop}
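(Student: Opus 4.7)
The plan is to write $\mathscr{EO}$ as a countable union indexed by pairs of distinct finite words, and to show that each piece is contained in the zero set of a $C^1$ function whose gradient, by the transversality condition, is uniformly bounded away from zero on that zero set. By the implicit function theorem each such zero set is a closed $C^1$ hypersurface of codimension one in $B \subset \mathbb{R}^d$, hence nowhere dense and of zero $d$-dimensional Lebesgue measure; taking the countable union then gives both claims simultaneously.

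Concretely, I would first write
\[
\mathscr{EO} = \bigcup_{\substack{\iiv, \jjv \in \Sigma^\ast \\ i_1 \neq j_1}} E_{\iiv,\jjv}, \qquad E_{\iiv,\jjv} := \{\underline{\lambda} \in B: S_{\iiv}^{\underline{\lambda}}|_{\Lambda^{\underline{\lambda}}} \equiv S_{\jjv}^{\underline{\lambda}}|_{\Lambda^{\underline{\lambda}}}\}.
\]
The restriction to pairs with distinct first symbols is without loss of generality, because any common initial prefix can be cancelled using the injectivity of the maps $S_{i}^{\underline{\lambda}}$ on $V$. Now fix such a pair $(\iiv, \jjv)$ and any auxiliary $\mathbf{k} \in \Sigma$. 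If $\underline{\lambda} \in E_{\iiv,\jjv}$, then evaluating the identity $S_{\iiv}^{\underline{\lambda}} \equiv S_{\jjv}^{\underline{\lambda}}$ on $\Lambda^{\underline{\lambda}}$ at the point $\Pi_{\underline{\lambda}}(\mathbf{k})$ forces
\[
\Pi_{\underline{\lambda}}(\iiv\mathbf{k}) = \Pi_{\underline{\lambda}}(\jjv\mathbf{k}).
\]

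Next I would set $f(\underline{\lambda}) := \Pi_{\underline{\lambda}}(\iiv\mathbf{k}) - \Pi_{\underline{\lambda}}(\jjv\mathbf{k})$, which is $C^1$ on $B$ by condition (b) of Definition~\ref{def:97}. Because $(\iiv\mathbf{k})_1 = i_1 \neq j_1 = (\jjv\mathbf{k})_1$, the transversality hypothesis~\eqref{eq:15} applies to the pair $\iiv\mathbf{k}, \jjv\mathbf{k}$, and since $|f(\underline{\lambda})| = 0 < \zeta$ at every zero of $f$, this yields $\|\nabla_{\underline{\lambda}} f(\underline{\lambda})\| > \zeta$ on $f^{-1}(0)$. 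The implicit function theorem then shows $f^{-1}(0)$ is locally a $C^1$ submanifold of codimension one in $B \subset \mathbb{R}^d$, hence closed with empty interior and of zero $d$-dimensional Lebesgue measure. The inclusion $E_{\iiv,\jjv} \subset f^{-1}(0)$ transfers these properties to $E_{\iiv,\jjv}$, and a countable union over all eligible $(\iiv, \jjv)$ concludes both assertions.

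The argument is essentially a direct application of transversality combined with the implicit function theorem, so no serious obstacle is anticipated; the only place that requires a little care is the initial reduction to pairs $\iiv \neq \jjv$ with $i_1 \neq j_1$, which is exactly what allows the transversality condition \eqref{eq:15} to be invoked on the extended infinite words $\iiv\mathbf{k}$ and $\jjv\mathbf{k}$. Note that in contrast to the harder $G_\delta$ statement for $(B \setminus \mathscr{SSP}) \cap \mathscr{WSP}$, which requires the construction in Section~\ref{sec:04} to rule out accumulations of near-overlaps, the exact-overlap set is small for the much simpler reason that it is cut out by $C^1$ equations with nondegenerate derivatives.
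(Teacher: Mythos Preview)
Your proposal is correct and follows essentially the same approach as the paper: decompose $\mathscr{EO}$ into a countable union over pairs $\iiv,\jjv$ with $i_1\neq j_1$, contain each piece in the zero set of $f(\underline{\lambda})=\Pi_{\underline{\lambda}}(\iiv\mathbf{k})-\Pi_{\underline{\lambda}}(\jjv\mathbf{k})$ (the paper simply takes $\mathbf{k}=\mathbf{1}$), and use transversality plus the implicit function theorem to see that $f^{-1}(0)$ is a $C^1$ hypersurface, hence nowhere dense and Lebesgue null. The only cosmetic difference is that the paper picks one nonzero partial derivative and concludes via a compactness argument, whereas you invoke the submanifold structure directly; the content is the same.
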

\begin{proof}
We can write $\mathscr{EO}$ as the countable union
\begin{equation*}
\mathscr{EO} = \bigcup_{\stackrel{\iiv,\jjv\in\Sigma^\ast}{i_1\neq j_1}} B_{\iiv,\jjv},
\end{equation*}
where $B_{\iiv,\jjv}= \big\{\underline{\lambda}\in B: S_{\iiv}^{\underline{\lambda} }\equiv S_{\jjv}^{\underline{\lambda} } \big\}$. Hence, it is enough to show that each $B_{\iiv,\jjv}$ is a set of first category and $\mathcal{L}_d(B_{\iiv,\jjv}) = 0$. We set
$$
\mathbf{i}:=\iiv\mathbf{1} \mbox{ and }
\mathbf{j}:=\jjv\mathbf{1}.
$$
Using that $\Pi_{\underline{\lambda} }(\mathbf{i})=
S_{\iiv}^{\underline{\lambda} }(\Pi_{\underline{\lambda} }(\mathbf{1}))
$ and
$\Pi_{\underline{\lambda} }(\mathbf{j})=
S_{\jjv}^{\underline{\lambda} }(\Pi_{\underline{\lambda} }(\mathbf{1}))
$ we get that
\begin{equation*}%\label{eq:50}
B_{\iiv,\jjv} \subset
\widetilde{B}_{\iiv,\jjv}:
=
\left\{\underline{\lambda} \in B
:
\Pi_{\underline{\lambda} }(\mathbf{i})-
\Pi_{\underline{\lambda} }(\mathbf{j})=0
\right\}.
\end{equation*}
We claim that
\begin{equation}\label{eq:53}
\widetilde{B}_{\iiv,\jjv} \mbox{ is a set of first category and } \mathcal{L}_d(\widetilde{B}_{\iiv,\jjv}) = 0,
\end{equation}
which implies the assertion of the proposition.

To show \eqref{eq:53}, we fix a $\underline{\lambda}'\in \widetilde{B}_{\iiv,\jjv}\cap \mathrm{int}(B)$ if $B_{\iiv,\jjv}\neq \emptyset $ and denote $f:\R^d\to\R$
$$f(\underline{\lambda} ):= \Pi_{\underline{\lambda} }(\mathbf{i})-
\Pi_{\underline{\lambda} }(\mathbf{j}).$$
Since $f(\underline{\lambda}')=0$ and $\{\mathcal{S}^{\underline{\lambda}}: \underline{\lambda} \in B\}$ is a transversal family, the transversality condition \eqref{eq:15} implies that one of the coordinates of $\nabla_{\underline{\lambda}} f(\underline{\lambda}')$ is positive in absolute value. Without loss of generality we may assume that this is the last coordinate:
\begin{equation*}%\label{eq:49}
\left|
\frac{\partial f}{\partial \lambda_d}(\underline{\lambda}')
\right|>\zeta>0.
\end{equation*}
By assumption, $\underline{\lambda} \mapsto
\Pi_{\underline{\lambda} }(\mathbf{i})$ is continuously differentiable, recall~\eqref{eq:42}, so there is a neighborhood $M$ of $\underline{\lambda} '$ such that
\begin{equation}\label{eq:47}
\left|
\frac{\partial f}{\partial \lambda_d}(\underline{\lambda})
\right|>0 \mbox{ for } \underline{\lambda} \in M.
\end{equation}
Let $\mathrm{proj}$ be the projection to the first $d-1$
coordinates:
$$
\mathrm{proj}(\lambda_1, \dots ,\lambda_d):=
(\lambda_1, \dots ,\lambda_{d-1}).
$$
We write
$$
B^{\ast}:=\mathrm{proj}(B),\quad
\underline{\lambda} ^{\ast}:=\mathrm{proj}(\underline{\lambda} ),
\;\mbox{ and }\;
{\underline{\lambda}'} ^{\ast}:=\mathrm{proj}(\underline{\lambda}').
$$
The Implicit Function Theorem implies that there exists an open neighborhood
$N \subset \mathrm{int}(B^{\ast})$ of ${\underline{\lambda}' }^{\ast}$ and there exists a unique continuously differentiable function $g:N\to \mathbb{R}$ such that $g({\underline{\lambda}'} ^{\ast})=\lambda'_d$ and for all $\underline{\lambda} ^{\ast} \in N$ we have
 \begin{equation*}%\label{eq:48}
   (\underline{\lambda} ^{\ast} ,g(\underline{\lambda}  ^{\ast} ))\in M  \;\mbox{ and }\;
   f(\underline{\lambda} ^{\ast},g(\underline{\lambda} ^{\ast}))=0.
 \end{equation*}
 Then it follows from \eqref{eq:47} that
 \begin{equation*}%\label{eq:46}
   \widetilde{B}_{\iiv,\jjv}\cap M=\left\{
   (\underline{\lambda} ^{\ast},g(\underline{\lambda} ^{\ast})):\,\underline{\lambda} ^{\ast}\in N
   \right\}.
 \end{equation*}
From the fact that $g$ is continuously differentiable we obtain that the set on the right hand-side is a set of first category and has zero $d$-dimensional Lebesgue measure. Then using a usual compactness argument we obtain the same for
$\widetilde{B}_{\iiv,\jjv}$. This completes the proof of \eqref{eq:53}.
\end{proof}

In light of Fact~\ref{eq:31}, to prove the claims in Theorem~\ref{thm:99} for $(B\setminus\mathscr{SSP})\cap\mathscr{WSP}$ it is enough to show the same for $(B\setminus\mathscr{SSP})\cap\mathscr{OSC}$.

\begin{prop}\label{lem:97}
Let $B \subset \mathbb{R}^d$ be a non-degenerate closed ball and let $\{\mathcal{S}^{\underline{\lambda}}: \underline{\lambda} \in B\}$ be a transversal family of self-conformal IFSs on the line. Then
\begin{equation*}
(B\setminus\mathscr{SSP})\cap\mathscr{OSC} \text{ is a set of first category.}
\end{equation*}

\end{prop}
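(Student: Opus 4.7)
The plan is to characterize $\mathscr{OSC}$ through the identity-limit criterion \eqref{eq:39}, reduce the statement to a density claim in the complete metric space $B\setminus\mathscr{SSP}$ (which is closed in $B$ because $\mathscr{SSP}$ is open), and then reproduce the four-step strategy of Subsection~\ref{sec:31} inside the parametric family. Setting
\begin{equation*}
\mathcal{V}_\varepsilon := \Big\{\underline{\lambda} \in B : \exists\, \iiv \neq \jjv \in \Sigma^*,\ \|(S^{\underline{\lambda}}_{\iiv})'\|^{-1}\sup_{x\in\Lambda^{\underline{\lambda}}} |S^{\underline{\lambda}}_{\iiv}(x) - S^{\underline{\lambda}}_{\jjv}(x)| < \varepsilon \Big\},
\end{equation*}
openness in $B$ follows from the continuous dependence $\underline{\lambda}\mapsto\Pi_{\underline{\lambda}}$ coming from \eqref{eq:42} and from bounded distortion. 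By \eqref{eq:39}, $\mathscr{OSC}^c = \bigcap_{n\ge 1} \mathcal{V}_{1/n}$, so by Baire it suffices to prove that $\mathcal{V}_{1/n} \cap (B\setminus\mathscr{SSP})$ is dense in $B\setminus\mathscr{SSP}$ for every $n$.

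For the density step, fix $\underline{\lambda}_0 \in B\setminus\mathscr{SSP}$ and $\varepsilon > 0$, and pick $\mathbf{i},\mathbf{j}\in\Sigma$ with $i_1\neq j_1$ realising the overlap $\Pi_{\underline{\lambda}_0}(\mathbf{i}) = \Pi_{\underline{\lambda}_0}(\mathbf{j})$. Transversality \eqref{eq:15} yields a coordinate $\lambda_q$ with $\partial_{\lambda_q}[\Pi_{\underline{\lambda}}(\mathbf{i})-\Pi_{\underline{\lambda}}(\mathbf{j})]$ nonzero at $\underline{\lambda}_0$, so that the function $\Pi_{\underline{\lambda}}(\mathbf{i})-\Pi_{\underline{\lambda}}(\mathbf{j})$ changes sign at two parameters $\underline{\lambda}_\pm$ within $\varepsilon/2$ of $\underline{\lambda}_0$ (parametric analog of Fact~\ref{fact:96}). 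Along the straight segment from $\underline{\lambda}_-$ to $\underline{\lambda}_+$ I would then imitate Fact~\ref{fact:97}: for $k$ sufficiently large the cylinders $S^{\underline{\lambda}}_{\mathbf{i}|k}(X)$ and $S^{\underline{\lambda}}_{\mathbf{j}|k}(X)$ swap their order, and selecting $\pmb{\omega},\pmb{\tau}\in\Sigma^*$ extending $\mathbf{i}|k,\mathbf{j}|k$ with $\omega_{|\pmb{\omega}|}\neq\tau_{|\pmb{\tau}|}$ so that the fixed points $\Pi_{\underline{\lambda}}(\pmb{\omega}^\infty),\Pi_{\underline{\lambda}}(\pmb{\tau}^\infty)$ satisfy opposite strict inequalities at the two endpoints, the intermediate value theorem delivers a parameter $\underline{\lambda}^{*}$, close to $\underline{\lambda}_0$ and still in $B\setminus\mathscr{SSP}$, at which $\widetilde{x}=S^{\underline{\lambda}^{*}}_{\pmb{\omega}}(\widetilde{x})=S^{\underline{\lambda}^{*}}_{\pmb{\tau}}(\widetilde{x})$ is a common fixed point.

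When the log-derivative ratio $R(\underline{\lambda}^{*}):=\log|(S^{\underline{\lambda}^{*}}_{\pmb{\omega}})'(\widetilde{x})|/\log|(S^{\underline{\lambda}^{*}}_{\pmb{\tau}})'(\widetilde{x})|$ is already irrational, the Dirichlet-approximation argument of Fact~\ref{fact:99} applies verbatim at $\underline{\lambda}^{*}$: for any prescribed $n$, the compositions $h_r$ in \eqref{cv81} provide words witnessing $\underline{\lambda}^{*}\in\mathcal{V}_{1/n}$. When $R(\underline{\lambda}^{*})\in\mathbb{Q}$, a final perturbation step, the parametric analog of Fact~\ref{fact:98}, is needed: a second application of \eqref{eq:15} to the pair $(\pmb{\omega}^\infty,\pmb{\tau}^\infty)$ shows that $M:=\{\underline{\lambda}:\Pi_{\underline{\lambda}}(\pmb{\omega}^\infty)=\Pi_{\underline{\lambda}}(\pmb{\tau}^\infty)\}$ is a $\mathcal{C}^1$ hypersurface through $\underline{\lambda}^{*}$ contained in $B\setminus\mathscr{SSP}$, and I would perturb along $M$ to $\underline{\lambda}^{**}$ with $R(\underline{\lambda}^{**})\notin\mathbb{Q}$. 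This last step is the \emph{main obstacle}, because in the unrestricted IFS space of Section~\ref{sec:31} one could bump the generators freely to move $R$, while here the perturbation is constrained to $M$; I would argue non-constancy of $R$ on $M$ by exhibiting, via the explicit $\mathcal{C}^1$ dependence of \eqref{eq:42}, a tangent vector to $M$ along which $R$ varies, falling back on switching to a different admissible common-fixed-point pair $(\pmb{\omega}',\pmb{\tau}')$ extracted from iterates of the original overlap if $R$ is accidentally locally constant. Once $\underline{\lambda}^{**}$ is obtained, Fact~\ref{fact:99} closes the argument.
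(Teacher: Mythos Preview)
Your $G_\delta$ step via the identity-limit criterion \eqref{eq:39} is fine, as is the intermediate-value argument producing a parameter $\underline{\lambda}^{*}$ with a common fixed point $\widetilde{x}=S^{\underline{\lambda}^{*}}_{\pmb{\omega}}(\widetilde{x})=S^{\underline{\lambda}^{*}}_{\pmb{\tau}}(\widetilde{x})$; these match what the paper does. Case~I (irrational ratio $R$) also matches: Fact~\ref{fact:99} applies verbatim.

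The genuine gap is your treatment of Case~II. Your plan is to perturb along the hypersurface $M=\{\underline{\lambda}:\Pi_{\underline{\lambda}}(\pmb{\omega}^\infty)=\Pi_{\underline{\lambda}}(\pmb{\tau}^\infty)\}$ until $R(\underline{\lambda})$ becomes irrational, arguing that $R$ is non-constant on $M$. But this can fail outright. For the self-similar translation family $S_i^{\underline{\lambda}}(x)=r_i x+t_i+\lambda_i$ (Example~\ref{ex:98}), which is a central instance of a transversal family, the derivatives $(S^{\underline{\lambda}}_{\pmb{\omega}})'\equiv r_{\pmb{\omega}}$ and $(S^{\underline{\lambda}}_{\pmb{\tau}})'\equiv r_{\pmb{\tau}}$ are independent of $\underline{\lambda}$, so $R$ is constant on all of $B$. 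Your fallback of ``switching to a different admissible common-fixed-point pair $(\pmb{\omega}',\pmb{\tau}')$'' does not help either: all such ratios are determined by the fixed tuple $(r_1,\dots,r_m)$, and for suitable $(r_i)$ every such ratio is rational. Thus Case~II cannot be reduced to Case~I by perturbation within the family.

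The paper resolves this by a different mechanism that avoids further perturbation altogether. When $R(\underline{\lambda}^{*})=p/q\in\mathbb{Q}$, one sets $f_1:=S^{\underline{\lambda}^{*}}_{(\pmb{\omega})^{2q}}$ and $f_2:=S^{\underline{\lambda}^{*}}_{(\pmb{\tau})^{2p}}$, so that $f_1,f_2$ share the fixed point $\widetilde{x}$ \emph{and} have equal derivatives there, and adjoins a third iterate $f_3$ with a different fixed point. An auxiliary lemma (Lemma~\ref{lem:51}) then shows directly, via bounded distortion, that any such triple $\{f_1,f_2,f_3\}$ fails the OSC; hence $\mathcal{S}^{\underline{\lambda}^{*}}$ itself fails the OSC. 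This is the missing idea: in the rational case one does not aim for $\mathscr{NOWSP}$ via Fact~\ref{fact:99}, but proves $\underline{\lambda}^{*}\notin\mathscr{OSC}$ by a separate, purely static argument at the single parameter $\underline{\lambda}^{*}$.
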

The proof of Proposition~\ref{lem:97} relies on the following auxiliary lemma.
\begin{lemma}\label{lem:51}
Let $\mathcal{F}:=\{f_1,f_2,f_3\}\in \Theta_{\beta,\rho}^3([0,1])$. Assume further that
\begin{enumerate}
\item $f_1(0)=f_2(0)=0$,
\item $f'_1(0)=f'_2(0)=\alpha$, and
\item $f_3(1)=1$.
\end{enumerate}
Then $\mathcal{F}$ does not satisfy the OSC.
\end{lemma}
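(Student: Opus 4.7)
My plan is to establish failure of the OSC by invoking the equivalent characterization~\eqref{eq:39}: OSC holds iff
$\inf_{\iiv \neq \jjv \in \Sigma^*} \|f'_{\iiv}\|^{-1} \sup_{x \in \Lambda} |f_{\iiv}(x) - f_{\jjv}(x)| > 0$.
So it suffices to exhibit a sequence of distinct pairs for which this ratio tends to $0$.

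Guided by conditions (1) and (2), I take $\iiv_n := 1^n$ and $\jjv_n := 2 \cdot 1^{n-1}$ for $n \geq 1$. Setting $y = f_1^{n-1}(x)$, both words first contract the attractor into a small neighbourhood of the common fixed point $0$, and
\[ f_{\iiv_n}(x) - f_{\jjv_n}(x) = f_1(y) - f_2(y). \]
The $\mathcal{C}^{1+\gamma}$ regularity (let $\gamma \in (0,1]$ denote the H\"older exponent of $f_i'$, distinct from the scalar $\alpha$ of the lemma), combined with conditions (1) and (2), yields Taylor expansions at $0$ of the form $f_i(y) = \alpha y + R_i(y)$ with $|R_i(y)| \leq M|y|^{1+\gamma}$ for a uniform constant $M$, whence $|f_1(y) - f_2(y)| \leq 2M|y|^{1+\gamma}$.

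To convert this into a bound on the target ratio I invoke bounded distortion~\eqref{eq:96}. Because $0 \in \Lambda$ is fixed by $f_1$, the mean value theorem combined with~\eqref{eq:96} gives $|y| = |f_1^{n-1}(x) - f_1^{n-1}(0)| \leq C_0 |\alpha|^{n-1}$ uniformly for $x \in [0,1]$, and similarly $\|(f_1^n)'\|_{\sup} \geq |\alpha|^n$. Plugging in, the ratio is bounded by a constant times
\[ |\alpha|^{(n-1)(1+\gamma) - n} = |\alpha|^{(n-1)\gamma - 1}, \]
which tends to $0$ as $n \to \infty$ because $|\alpha| \leq \rho < 1$ and $\gamma > 0$. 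Thus~\eqref{eq:39} is violated and the OSC fails.

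The main obstacle is only the bookkeeping around the bounded distortion constants and the uniformity of the Taylor remainder; nothing deeper is needed. I note that condition (3), that $f_3(1) = 1$, is not explicitly invoked in the above argument; it appears to be included in the hypotheses in view of how Lemma~\ref{lem:51} is later applied (presumably in Proposition~\ref{lem:97}, where the third map will arise from the transversal family set-up).
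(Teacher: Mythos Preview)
Your proof is correct and takes a genuinely different, more direct route than the paper's. You violate the OSC by exhibiting explicit pairs $\iiv_n=1^n$, $\jjv_n=2\cdot 1^{n-1}$ for which the ratio in characterization~\eqref{eq:39} tends to zero; the key input is the $\mathcal{C}^{1+\gamma}$ Taylor bound $|f_1(y)-f_2(y)|\le 2M|y|^{1+\gamma}$ near the common fixed point, combined with bounded distortion to control $|y|$ and $\|(f_1^n)'\|$. By contrast, the paper argues by contradiction: assuming an OSC open set $U$ exists, it pushes an interval $J=f_3(I)\subset U$ toward $0$ under words in $\{1,2\}^*$ and shows that the disjoint images $f_{1\iiv}(J)$ and $f_{2\iiv}(J)$ must eventually overlap, using condition~(2) to match the asymptotic contraction rates.

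What your approach buys is twofold. First, it is shorter and avoids the case distinction on $\lim a_{\iiv}/b_{\iiv}$. Second, and more notably, it does \emph{not} use hypothesis~(3) at all, whereas the paper's argument does: the paper needs $J=f_3(I)$ to be bounded away from $0$ (so that $a_0>0$, making the ratios in \eqref{eq:500}--\eqref{eq:501} meaningful), and this is precisely what $f_3(1)=1$ guarantees. So your argument in fact proves a slightly stronger statement---the two-map system $\{f_1,f_2\}$ already fails OSC under hypotheses~(1) and~(2)---and your remark that condition~(3) is superfluous for the lemma itself is justified, not merely a guess about the downstream application.
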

\begin{proof}
The proof goes by contraposition. Assume that there exists an open bounded set $U$ such that for every $i\neq j\in\{1,2,3\}$
\begin{equation*}
f_i(U)\subset U \;\;\text{and}\;\; f_i(U)\cap f_j(U)=\emptyset.
\end{equation*}
Let $I$ be an open interval in $U$ and denote $J:=f_3(I)=(a_0,b_0)$. Moreover, let $\{1,2\}^{\ast}$ denote all finite length words with entries either $1$ or $2$.

Writing $a_{\iiv}:=f_{\iiv}(a_0)$ and $b_{\iiv}:=f_{\iiv}(b_0)$ for $\iiv\in\{1,2\}^{\ast}$, we have that
\begin{equation}\label{eq:500}
C_0^{-1}\alpha^n a_0 \leq a_{\iiv} \leq C_0 \alpha^n a_0 \;\;\text{and}\;\; C_0^{-1}\alpha^n b_0 \leq b_{\iiv} \leq C_0 \alpha^n b_0,
\end{equation}
where $|\iiv|=n$ and $C_0$ is the constant from the bounded distortion property~\eqref{eq:96}. Indeed, using that $f_{\iiv}(0)=0$, the mean value theorem, the bounded distortion property and $(2)$ implies that
\begin{equation*}
a_{\iiv} = f_{\iiv}(a_0) = f'_{\iiv}(\xi) a_0\leq C_0 f'_{\iiv}(0)a_0=C_0\alpha^n a_0.
\end{equation*}
The argument for the other direction and $b_{\iiv}$ is exactly the same.

Hence, \eqref{eq:500} implies that for every $\iiv\in\{1,2\}^{\ast}$
\begin{equation}\label{eq:501}
0<C_0^{-2}\, \frac{a_0}{b_0} \leq \frac{a_{\iiv}}{b_{\iiv}} \leq C_0^2\, \frac{a_0}{b_0} < \infty.
\end{equation}
Moreover, for every $i=1,2$
\begin{equation}\label{eq:502}
\frac{f_i(a_{\iiv})}{a_{\iiv}} = \frac{f_i(f_{\iiv}(a_0)) - f_i(f_{\iiv}(0))}{a_{\iiv}}\to f'_i(0)=\alpha \;\;\text{as } |\iiv|\to\infty,
\end{equation}
since $f_{\iiv}(a_0)\to0$. The same limit holds for $f_i(b_{\iiv})/b_{\iiv}$.

We also claim that for every $\iiv\neq \jjv\in\{1,2\}^{\ast}$
\begin{equation*}
f_{\iiv}(J)\cap f_{\jjv}(J)=\emptyset.
\end{equation*}
Indeed, there is a unique $\underline{\omega}$ and $\underline{\tau}$ with $\omega_1\neq \tau_1$ for which $\iiv3=(\iiv\wedge\jjv)\underline{\omega}$ and $\jjv3=(\iiv\wedge\jjv)\underline{\tau}$, moreover, $f_{\omega_1}(U)\cap f_{\tau_1}(U) = \emptyset$. Then
\begin{equation*}
f_{\iiv}(J)\cap f_{\jjv}(J)=f_{\iiv\wedge\jjv}\big( f_{\underline{\omega}}(I)\cap f_{\underline{\tau}}(I) \big) \subseteq f_{\iiv\wedge\jjv}\big( f_{\omega_1}(U)\cap f_{\tau_1}(U) \big) = \emptyset.
\end{equation*}
Therefore, $(a_{1\iiv},b_{1\iiv})\cap (a_{2\iiv},b_{2\iiv})=\emptyset$ and we may assume without loss of generality that $b_{1\iiv}\leq a_{2\iiv}$ for infinitely many $\iiv\in\{1,2\}^{\ast}$.

\noindent\textbf{Case I.} Of these $\iiv$, choose a sequence such that $a_{\iiv}/b_{\iiv}\to z\neq 1$. Then
\begin{equation*}
\frac{b_{2\iiv}-a_{2\iiv}}{b_{2\iiv}-a_{1\iiv}} = \frac{ \frac{f_2(b_{\iiv})}{b_{\iiv}}-\frac{f_2(a_{\iiv})}{a_{\iiv}}\cdot\frac{a_{\iiv}}{b_{\iiv}} }{ \frac{f_2(b_{\iiv})}{b_{\iiv}}-\frac{f_1(a_{\iiv})}{a_{\iiv}}\cdot\frac{a_{\iiv}}{b_{\iiv}} } \to \frac{\alpha-\alpha z}{\alpha-\alpha z} =1,
\end{equation*}
where we used \eqref{eq:501} and \eqref{eq:502}. Similarly,
\begin{equation*}
\frac{b_{1\iiv}-a_{1\iiv}}{b_{2\iiv}-a_{1\iiv}}\to \frac{\alpha-\alpha z}{\alpha-\alpha z} =1.
\end{equation*}
Thus, $a_{2\iiv}<b_{1\iiv}$ for $|\iiv|$ sufficiently large, contradicting that $b_{1\iiv}\leq a_{2\iiv}$.

\noindent\textbf{Case II.} Now assume $a_{\iiv}/b_{\iiv}\to 1$ as $|\iiv|\to\infty$. Then a combination of the bounded distortion property, the mean value theorem, and \eqref{eq:500} yields
\begin{equation*}
C_0^{-1} \alpha^n (b_0-a_0) \leq f'_{\iiv}(\xi) (b_0-a_0) = b_{\iiv}-a_{\iiv} = b_{\iiv} \Big( 1-\frac{a_{\iiv}}{b_{\iiv}}\Big) \leq C_0 \alpha^n b_0 \Big( 1-\frac{a_{\iiv}}{b_{\iiv}}\Big).
\end{equation*}
As a result $1-a_{\iiv}/b_{\iiv}\geq C_0^{-2}b_0^{-1}(b_0-a_0)>0$, which contradicts $a_{\iiv}/b_{\iiv}\to 1$.
\end{proof}

%\begin{claim}\label{claim:50}
%If $\{\mathcal{S}^{\underline{\lambda}}: \underline{\lambda} \in B\}$ is a transversal family of self-conformal IFSs on the line, then $(B\setminus\mathscr{SSP})\cap\mathscr{OSC}$ is a set of first category.
%\end{claim}

\begin{proof}[Proof of Proposition~\ref{lem:97}]
It is enough to prove that the compliment of $\mathscr{OSC}$ is a dense $G_\delta$ set. We first argue that it is dense, i.e. for every $\underline{\lambda}_0\notin\mathscr{SSP}$ and $\varepsilon>0$
\begin{equation}\label{eq:503}
\text{there exits } \underline{\lambda}^{\ast}\in B(\underline{\lambda}_0,\varepsilon) \text{ such that } \underline{\lambda}^{\ast}\notin \mathscr{OSC}.
\end{equation}

Choose $\underline{\lambda}_0\notin\mathscr{SSP}$, i.e. there exist $\ii,\jj\in\Sigma$ with $i_1\neq j_1$ such that $\Pi_{\underline{\lambda}_0}(\ii)=\Pi_{\underline{\lambda}_0}(\jj)$. We may assume that $\underline{\lambda}_0\in\mathrm{int} B$ because the boundary of $B$ is a first category set with zero $d$-dimensional Lebesgue measure. The transversality condition~\eqref{eq:15} implies that $\big| \frac{\partial}{\partial \lambda_{\ast}}(\Pi_{\underline{\lambda}}(\ii)-\Pi_{\underline{\lambda}}(\jj))|_{\underline{\lambda}=\underline{\lambda}_0}\big|>0$ for some coordinate $\lambda_{\ast}$. Along this direction there exists parameter $\underline{\omega}\in\mathrm{int} B$ arbitrarily close to $\underline{\lambda}_0$ such that $\Pi_{\underline{\omega}}(\ii)\neq \Pi_{\underline{\omega}}(\jj)$. An analogous argument to the one in the proof of Fact~\ref{fact:97} implies that
for every $\varepsilon>0$ there exist $x_0\in X$, $\underline{\lambda}^{\ast}\in B(\underline{\lambda}_0,\varepsilon)$ and $n,m\geq 1$, for which
\begin{equation*}
S^{\underline{\lambda}^{\ast}}_{\ii|n}(x_0) =x_0 = S^{\underline{\lambda}^{\ast}}_{\jj|m}(x_0).
\end{equation*}
\noindent\textbf{Case I.} If
\begin{equation*}
\frac{\log \big| \big(S^{\underline{\lambda}^{\ast}}_{\ii|n}\big)'(x_0) \big|}{\log \big| \big(S^{\underline{\lambda}^{\ast}}_{\jj|m}\big)'(x_0) \big|} \notin \mathbb{Q},
\end{equation*}
then Fact~\ref{fact:99} implies that $\underline{\lambda}^{\ast}\notin \mathscr{WSP}$, in particular, $\underline{\lambda}^{\ast}\notin \mathscr{OSC}$.

\noindent\textbf{Case II.} If
\begin{equation*}
\frac{\log \big| \big(S^{\underline{\lambda}^{\ast}}_{\ii|n}\big)'(x_0) \big|}{\log \big| \big(S^{\underline{\lambda}^{\ast}}_{\jj|m}\big)'(x_0) \big|} =\frac{p}{q}\in \mathbb{Q},
\end{equation*}
then consider the IFS
\begin{equation*}
f_1:= S^{\underline{\lambda}^{\ast}}_{(\ii|n)^{2q}},\; f_2:= S^{\underline{\lambda}^{\ast}}_{(\jj|m)^{2p}} \;\text{ and }\; f_3,
\end{equation*}
where $f_3$ is any other map with fixpoint other than $x_0$ that is the composition of maps from $\mathcal{S}^{\underline{\lambda}^{\ast}}$. Then Lemma~\ref{lem:51} implies that $\{f_1,f_2,f_3\}$ does not satisfy the OSC, hence, $\underline{\lambda}^{\ast}\notin \mathscr{OSC}$. This proves~\eqref{eq:503}.

Now we show that $B\setminus \mathscr{OSC}$ can be expressed as the countable intersection of open sets. Let $s_0(\underline{\lambda})$ denote the conformal dimension of $\mathcal{S}^{\underline{\lambda}}$. Recall the equivalent characterizations of OSC from~\eqref{eq:32}, in particular
\begin{equation*}
B\setminus \mathscr{OSC} = \big\{\underline{\lambda}\in B:\; \mathcal{H}^{s_0(\underline{\lambda})}(\Lambda^{\underline{\lambda}})=0\big\} = \bigcap_{k>0} \underbrace{ \Big\{\underline{\lambda}\in \mathrm{int} B:\; \mathcal{H}^{s_0(\underline{\lambda})}(\Lambda^{\underline{\lambda}})<\frac{1}{k}\Big\} }_{=:\, J_{k}}.
\end{equation*}
Using the definition of Hausdorff measure, $J_{k}$ is equal to the countable intersection
\begin{equation*}
\bigcap_{n>0} \bigg\{\underline{\lambda}\in \mathrm{int} B:\; \exists\, \big\{A^{\underline{\lambda}}_i\big\} \text{ such that } \Lambda^{\underline{\lambda}}\subset \bigcup_i A^{\underline{\lambda}}_i,\, \big| A^{\underline{\lambda}}_i \big|<\frac{1}{n},\, \sum_i\big| A^{\underline{\lambda}}_i \big|^{s_0(\underline{\lambda})}<\frac{1}{k} \bigg\}.
\end{equation*}
Since each of the sets in the intersection is open, the assertion follows.
\end{proof}

\begin{prop}\label{claim:51}
Let $B \subset \mathbb{R}^d$ be a non-degenerate closed ball and let $\{\mathcal{S}^{\underline{\lambda}}\}_{\underline{\lambda} \in B}$ be a transversal family of self-similar IFSs on the line, then $\mathcal{L}_d\big((B\setminus\mathscr{SSP})\cap\mathscr{OSC}\big)=0.$
\end{prop}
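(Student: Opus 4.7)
The plan is to use the rigidity of the self-similar structure under OSC to reduce SSP failure to a \emph{finite} family of codimension-one equations, each a Lebesgue null set by the transversality argument already used in Proposition~\ref{lem:98}. As an initial reduction, by Fact~\ref{eq:31} one has $\mathscr{WSP}\setminus\mathscr{OSC}\subseteq\mathscr{EO}$, and Proposition~\ref{lem:98} gives $\mathcal{L}_d(\mathscr{EO})=0$; it is therefore enough to show $\mathcal{L}_d\bigl((B\setminus\mathscr{SSP})\cap\mathscr{OSC}\bigr)=0$, which is exactly what Proposition~\ref{claim:51} asserts.

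The main structural input is special to self-similar IFSs on $\mathbb{R}$: if $\underline{\lambda}\in\mathscr{OSC}$, then Hutchinson's theorem gives $s_0(\underline{\lambda})=\dim_{\mathrm H}\Lambda^{\underline{\lambda}}\leq 1$, and the open interval $U^{\underline{\lambda}}:=(\alpha^{\underline{\lambda}},\beta^{\underline{\lambda}})$, with $\alpha^{\underline{\lambda}}:=\min\Lambda^{\underline{\lambda}}$ and $\beta^{\underline{\lambda}}:=\max\Lambda^{\underline{\lambda}}$, itself realises the OSC for $\mathcal{S}^{\underline{\lambda}}$. Hence the first-level images $S_i^{\underline{\lambda}}(U^{\underline{\lambda}})$ are pairwise disjoint open sub-intervals of $U^{\underline{\lambda}}$, and any SSP violation at $\underline{\lambda}$ arises from the closures of two such sub-intervals sharing a single boundary point. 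Since the endpoints of $S_i^{\underline{\lambda}}(U^{\underline{\lambda}})$ are $S_i^{\underline{\lambda}}(\alpha^{\underline{\lambda}})$ and $S_i^{\underline{\lambda}}(\beta^{\underline{\lambda}})$, on $(B\setminus\mathscr{SSP})\cap\mathscr{OSC}$ there must exist $i\ne j$ and $\mathbf{c},\mathbf{d}\in\{\mathbf{a}^{\underline{\lambda}},\mathbf{b}^{\underline{\lambda}}\}$ satisfying $\Pi_{\underline{\lambda}}(i\mathbf{c})=\Pi_{\underline{\lambda}}(j\mathbf{d})$, where $\mathbf{a}^{\underline{\lambda}},\mathbf{b}^{\underline{\lambda}}\in\Sigma$ denote the codings of $\alpha^{\underline{\lambda}}$ and $\beta^{\underline{\lambda}}$.

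The crucial observation is that, in the self-similar setting, $\mathbf{a}^{\underline{\lambda}}$ and $\mathbf{b}^{\underline{\lambda}}$ are periodic words of period at most two, determined purely by the signs of the ratios $r_i^{\underline{\lambda}}$ together with the left-right ordering of the $2m$ points $\{S_i^{\underline{\lambda}}(\alpha^{\underline{\lambda}}),S_i^{\underline{\lambda}}(\beta^{\underline{\lambda}})\}_{i=1}^m$. This is combinatorial data taking only finitely many values as $\underline{\lambda}$ varies over $B$, so the set $\mathcal{A}:=\{\mathbf{a}^{\underline{\lambda}},\mathbf{b}^{\underline{\lambda}}:\underline{\lambda}\in B\}\subset\Sigma$ is finite. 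Therefore
\[(B\setminus\mathscr{SSP})\cap\mathscr{OSC}\;\subseteq\;\bigcup_{\substack{i\ne j\\ \mathbf{c},\mathbf{d}\in\mathcal{A}}}\bigl\{\underline{\lambda}\in B:\Pi_{\underline{\lambda}}(i\mathbf{c})=\Pi_{\underline{\lambda}}(j\mathbf{d})\bigr\},\]
a \emph{finite} union. Since $i\ne j$, the hypothesis of transversality \eqref{eq:15} applies to each pair $(i\mathbf{c},j\mathbf{d})$, so each summand is $\mathcal{L}_d$-null by exactly the implicit-function-theorem argument used in the proof of Proposition~\ref{lem:98}. A finite union of null sets is null, and the proposition follows.

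The only genuinely non-trivial step is the structural claim that the convex-hull interval $(\alpha^{\underline{\lambda}},\beta^{\underline{\lambda}})$ realises the OSC in the self-similar case on $\mathbb{R}$, equivalently that first-level overlaps can only be endpoint overlaps. This is precisely where $s_0\leq 1$ is used, and it has no counterpart in the general self-conformal setting -- which is why Theorem~\ref{thm:99} yields the corresponding $\mathcal{L}_d$-null statement only under the self-similar hypothesis.
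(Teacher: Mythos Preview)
Your argument rests on the claim that whenever $\underline{\lambda}\in\mathscr{OSC}$, the open convex-hull interval $U^{\underline{\lambda}}=(\alpha^{\underline{\lambda}},\beta^{\underline{\lambda}})$ already realises the OSC, so that any SSP violation must be a coincidence of two first-level endpoints $\Pi_{\underline{\lambda}}(i\mathbf{c})=\Pi_{\underline{\lambda}}(j\mathbf{d})$ with $\mathbf{c},\mathbf{d}\in\{\mathbf{a}^{\underline{\lambda}},\mathbf{b}^{\underline{\lambda}}\}$. This implication is false, and the failure is not repairable within your framework. Consider the translation family of the IFS $S_1(x)=x/4$, $S_2(x)=x/4+1/8$, $S_3(x)=x/4+3/4$ on $[0,1]$. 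All ratios are $1/4$, so by Lemma~\ref{lemma:01} the translation family is transversal. At this particular parameter one checks via the Bandt--Graf criterion that OSC holds: for $|\iiv|=|\jjv|$ the map $S_{\iiv}^{-1}\circ S_{\jjv}$ is a translation by an element of $\tfrac{1}{2}\mathbb{Z}\setminus\{0\}$, hence bounded away from the identity. Nevertheless the convex-hull images $S_1((0,1))=(0,1/4)$ and $S_2((0,1))=(1/8,3/8)$ overlap on an interval, so $(0,1)$ does \emph{not} realise the OSC. Moreover SSP fails here, with $\Pi(2,1,3,3,\dots)=\Pi(1,3,1,1,\dots)=3/16$; this collision point is not a first-level endpoint (those are $0,1/4,1/8,3/8,3/4,1$), so it is not captured by any equation $\Pi_{\underline{\lambda}}(i\mathbf{c})=\Pi_{\underline{\lambda}}(j\mathbf{d})$ with $\mathbf{c},\mathbf{d}$ coding the extremes of $\Lambda$. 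Your finite union therefore misses this parameter entirely.

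One might hope to rescue the idea by allowing $\mathbf{c},\mathbf{d}$ to range over all eventually-extremal sequences (in the example the collision codes are $21\mathbf{b}$ and $13\mathbf{a}$), giving a countable rather than finite union of transversality-null sets. But proving that under OSC every SSP collision is of this eventually-extremal form requires a structural result you have not supplied, and it is far from obvious once the OSC witness is disconnected. The paper's proof avoids this difficulty altogether: it does not attempt to classify the collision points, but instead runs a Lebesgue density argument, showing that near any $\underline{\lambda}_0\notin\mathscr{SSP}$ one can find (via a Moran-class matching lemma and transversality) a ball of controlled relative size on which OSC \emph{fails}, so that $(B\setminus\mathscr{SSP})\cap\mathscr{OSC}$ has no density points.
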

\begin{proof}
For $d=1$, the statement was proved in~\cite[Theorem 2.1]{PSS_SelfSIm_2000IsrlJ}, for higher dimensions only an outline of the proof was given, see ~\cite[Theorem 7.1]{PSS_SelfSIm_2000IsrlJ}. For the convenience of the reader, we include a detailed argument in Appendix~\ref{sec:appendix}.
\end{proof}

\begin{proof}[Proof of Theorem~\ref{thm:99}]
	Follows directly from Propositions~\ref{lem:98}, \ref{lem:97} and \ref{claim:51}.
\end{proof}

%%%%%%%%%%%%%%%%%%%%%%%%%%%%%%%%%%%%%%%%%%%%%%%%%%%%%%%%%%%%%%%%%%%%%%%%%%%
%%%%%%%%%%%%%%%%%%%%%%%%%%%%%%%%%%%%%%%%%%%%%%%%%%%%%%%%%%%%%%%%%%%%%%%%%%%
\appendix

\section{Detailed proof of Proposition~\ref{claim:51}}\label{sec:appendix}

In the self-similar case, we use another equivalent characterization of the OSC due to Bandt and Graf~\cite{BandtGraf_1992self}. It asserts that a self-similar IFS does \textbf{not} satisfy the OSC if and only if
\begin{equation*}
\forall  \varepsilon>0, \exists \  \iiv\neq \jjv\in\Sigma^\ast:\; \|S_{\iiv}^{-1}\circ S_{\jjv}-\mathrm{Id}\|<\varepsilon.
\end{equation*}
One can easily get that for an arbitrary $x,x_0\in\mathbb{R}$ we have
\begin{equation}\label{eq:79}
\big(S_{\iiv}^{-1}\circ S_{\jjv}-\mathrm{Id}\big)(x) = \left(\frac{r_{\jjv}}{r_{\iiv}} -1\right) (x-x_0)  + \frac{S_{\jjv}(x_0)-S_{\iiv}(x_0)}{r_{\iiv}}\,.
\end{equation}

Let $B \subset \mathbb{R}^d$ be a non-degenerate ball and let
\begin{equation*}\label{eq:22}
\mathcal{S}^{\underline{\lambda}} = \big\{S^{\underline{\lambda}}_i(x)= r_i(\underline{\lambda}) x +t_i(\underline{\lambda})\big\}_{i=1}^m, \;\;\text{where } r_i(\underline{\lambda} )\in(-1,1)\setminus\{0\},
\end{equation*}
be a transversal family of self-similar IFSs on the line as in Definition~\ref{def:97}. In particular, $0<\beta\leq |r_i(\underline{\lambda} )|\leq \rho<1$ for all $\underline{\lambda} \in B$ and $i\in[m]:=\{1,\ldots,m\}$. For an $\varepsilon>0,$ we write
\begin{multline*}
\mathcal{V}_{\varepsilon}:=
\Big\{
\underline{\lambda}\in B :
\text{ there exists } \iiv\ne\jjv\in\Sigma^{\ast} \text{ such that}\\
\frac{r_{\iiv}(\underline{\lambda})}
{ r_{\jjv}(\underline{\lambda}) }
\in (e^{-\varepsilon},e^{\varepsilon})
\;\mbox{ and }\;
|\Pi_{\underline{\lambda}}(\iiv\mathbf{1})-\Pi_{\underline{\lambda}}(\jjv\mathbf{1})| < \varepsilon\cdot r_{\jjv}(\underline{\lambda})
\Big\},
\end{multline*}
where $\mathbf{1}:=(1,1, \dots)$.
Then \eqref{eq:79} with the choice $x_0=\Pi_{\underline{\lambda}}(\mathbf{1})$ immediately implies that for every $\underline{\lambda} \in B$:
\begin{equation*}\label{eq:69}
\underline{\lambda}\not\in\mathscr{OSC}
 \;\Longleftrightarrow\; \underline{\lambda}\in \bigcap_{n>0} \mathcal{V}_{1/n}.
\end{equation*}
Hence, to prove the proposition it is enough to show that for every $\varepsilon>0$ the set $B\setminus (\mathscr{SSP}\cup\mathcal{V}_{\varepsilon})$ has no density point. We already argued in~\eqref{eq:503} that for every $\underline{\lambda}_0\notin\mathscr{SSP}$ and $\delta>0$ there exits $\underline{\lambda}_1\in B(\underline{\lambda}_0,\delta)$ such that $\underline{\lambda}_1\notin \mathscr{OSC}$. In particular, $\mathcal{V}_{\varepsilon}$ is dense in $B\setminus\mathscr{SSP}$. However, we need a more quantitative dependence between the parameters, see \eqref{eq:64}.

We start with a technical lemma.
For a $\underline{\lambda} _0\in B$ and a $k \geq 1$ we define the corresponding Moran class:
\begin{equation*}\label{eq:72}
\mathcal{M}_k(\underline{\lambda} _0):=
\big\{
\iiv\in\Sigma^{\ast}:
|r_{\iiv}(\underline{\lambda} _0)| \leq \rho^k
<  |r_{\iiv^-}(\underline{\lambda} _0)|
\big\},
\end{equation*}
where $r_{\iiv^-}(\underline{\lambda}_0):
=r_{i_1}(\underline{\lambda} _0)\cdots
r_{i_{|\iiv|-1}}(\underline{\lambda} _0).
$
The proof of the following Lemma is the combination of the proofs of
\cite[Lemma 3.2 and Lemma 3.3]{PSS_SelfSIm_2000IsrlJ}.
\begin{lemma}\label{lem:99}
	For every $\varepsilon>0$ and $\underline{\lambda}_0\in B$ we can find an $N=N(\varepsilon,\underline{\lambda} _0)$ such that
	for every $k \geq 1$ and
	$\iiv,\jjv\in\mathcal{M}_k(\underline{\lambda} _0)$
	there exists $\mathbf{u},\mathbf{v}\in \Sigma^{\ast}$
	such that
	\begin{description}
		\item[(a)] $\iiv$ is a prefix of $\mathbf{u}$ and
		$|\mathbf{u}|-|\iiv| \leq N$
		\item[(b)] $\jjv$ is a prefix of $\mathbf{v}$ and
		$|\mathbf{v}|-|\jjv| \leq N$
	\end{description}
	satisfying
	\begin{equation*}\label{eq:70}
	\frac{r_{\mathbf{u}}(\underline{\lambda} _0)}
	{r_{\mathbf{v}}(\underline{\lambda} _0)}
	\in\left(\e{-\varepsilon/3},\e{\varepsilon/3}\right).
	\end{equation*}
	Moreover, assume that
	$\underline{\lambda} \in B$
	satisfies
	\begin{equation*}\label{eq:73}
	\|\underline{\lambda} -\underline{\lambda} _0\|
	\leq
	\frac{\varepsilon \beta}{3L(k+N)},
	\end{equation*}
	where $L:=\max\limits_{i\in[m],\underline{\lambda} \in B}\| \nabla r_i(\underline{\lambda} )\|$.
	Then we have
	\begin{equation}\label{eq:71}
	\frac{r_{\mathbf{u}}(\underline{\lambda} )}
	{r_{\mathbf{u}}(\underline{\lambda}_0 )}
	\in
	\left(\e{-\varepsilon/3},\e{\varepsilon/3}\right)
	\mbox{ and  }
	\frac{r_{\mathbf{u}}(\underline{\lambda} )}
	{r_{\mathbf{v}}(\underline{\lambda} )}
	\in
	\left(\e{-\varepsilon},\e{\varepsilon}\right).
	\end{equation}
\end{lemma}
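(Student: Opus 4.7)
The plan is to prove the two claims of the lemma separately: first the existence of the extensions $\mathbf{u},\mathbf{v}$ together with the log-ratio bound at $\underline{\lambda}_0$, which is a purely combinatorial/arithmetic density statement about words in $\Sigma^*$; second the perturbation estimates \eqref{eq:71}, which is a quantitative continuity estimate for products of contraction ratios.

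For the first part I would assume without loss of generality that $|r_{\iiv}(\underline{\lambda}_0)|\le|r_{\jjv}(\underline{\lambda}_0)|$ and set
\[
\tau:=\log|r_{\iiv}(\underline{\lambda}_0)|-\log|r_{\jjv}(\underline{\lambda}_0)|\in[\log\beta,0].
\]
Because $\iiv,\jjv\in\mathcal{M}_k(\underline{\lambda}_0)$, both log-moduli lie in $[k\log\rho+\log\beta,\,k\log\rho]$, so $|\tau|\le|\log\beta|$ is bounded independently of $k$. I would look for extensions $\mathbf{u}=\iiv\mathbf{w}_1$, $\mathbf{v}=\jjv\mathbf{w}_2$ with $|\mathbf{w}_1|,|\mathbf{w}_2|\le N$ satisfying $|\log|r_{\mathbf{w}_2}(\underline{\lambda}_0)|-\log|r_{\mathbf{w}_1}(\underline{\lambda}_0)|-\tau|<\varepsilon/3$. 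Setting $a_i:=\log|r_i(\underline{\lambda}_0)|$ and letting $G\subset\mathbb{R}$ be the additive group they generate, the set of achievable log-differences is $\{\sum_{i=1}^m c_ia_i:c_i\in\mathbb{Z},\,\sum|c_i|\le 2N\}$, which exhausts $G\cap[-2N|\log\beta|,2N|\log\beta|]$ as $N$ grows. If $G$ is dense in $\mathbb{R}$, a Dirichlet/pigeonhole argument makes this set $\varepsilon/3$-dense in $[-|\log\beta|,|\log\beta|]$ for $N$ large enough; if $G=g\mathbb{Z}$ is discrete, then $\tau\in G$ (both log-moduli lie in the semigroup generated by $\{a_i\}$) and a Frobenius-type argument provides $c_i$ with bounded $\sum|c_i|$ achieving $\tau$ exactly. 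In either case $N=N(\varepsilon,\underline{\lambda}_0)$ exists.

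For the perturbation estimate, both $\mathbf{u},\mathbf{v}$ have length at most $k+N$. By the hypothesis $\|\nabla r_i\|\le L$ and the mean value theorem $|r_i(\underline{\lambda})-r_i(\underline{\lambda}_0)|\le L\|\underline{\lambda}-\underline{\lambda}_0\|$, while $|r_i(\underline{\lambda})|\ge\beta$ on $B$. Telescoping the logarithm of the product gives
\[
\left|\log\frac{r_{\mathbf{u}}(\underline{\lambda})}{r_{\mathbf{u}}(\underline{\lambda}_0)}\right|\le\sum_{\ell=1}^{|\mathbf{u}|}\frac{|r_{u_\ell}(\underline{\lambda})-r_{u_\ell}(\underline{\lambda}_0)|}{\beta}\le\frac{(k+N)L\,\|\underline{\lambda}-\underline{\lambda}_0\|}{\beta}\le\frac{\varepsilon}{3}
\]
using the assumed bound on $\|\underline{\lambda}-\underline{\lambda}_0\|$, which is exactly the first inclusion of \eqref{eq:71}. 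The analogous bound holds for $\mathbf{v}$, and the three-fold decomposition
\[
\frac{r_{\mathbf{u}}(\underline{\lambda})}{r_{\mathbf{v}}(\underline{\lambda})}=\frac{r_{\mathbf{u}}(\underline{\lambda})}{r_{\mathbf{u}}(\underline{\lambda}_0)}\cdot\frac{r_{\mathbf{u}}(\underline{\lambda}_0)}{r_{\mathbf{v}}(\underline{\lambda}_0)}\cdot\frac{r_{\mathbf{v}}(\underline{\lambda}_0)}{r_{\mathbf{v}}(\underline{\lambda})}
\]
places each factor in $(e^{-\varepsilon/3},e^{\varepsilon/3})$, giving the second inclusion.

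The substantive part is the combinatorial first step: finding a single $N$ uniform in $k$ relies crucially on $|\tau|\le|\log\beta|$ being $k$-independent (this is why the common Moran-class assumption is essential) and on treating the dense and discrete cases for $G\subset\mathbb{R}$ separately. This is exactly the content of \cite[Lemma~3.2]{PSS_SelfSIm_2000IsrlJ}, and its proof transfers unchanged to the parametric setting since it depends on $\underline{\lambda}_0$ only through the numerical values $\{r_i(\underline{\lambda}_0)\}_{i=1}^m$; the passage to a higher dimensional parameter ball $B\subset\mathbb{R}^d$ affects only the perturbation step, where the scalar derivative from the one-dimensional case is replaced by the gradient bound $\|\nabla r_i\|\le L$.
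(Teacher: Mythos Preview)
Your proposal is correct and matches the paper's approach. The paper does not give an independent proof of this lemma; it simply states that the proof is the combination of \cite[Lemma~3.2 and Lemma~3.3]{PSS_SelfSIm_2000IsrlJ}, and your sketch reproduces exactly those two ingredients: the arithmetic density argument on the additive group generated by $\{\log|r_i(\underline{\lambda}_0)|\}$ (with the dense/discrete dichotomy) for the bounded-extension part, and the telescoped logarithmic estimate using $|r_i|\ge\beta$ and $\|\nabla r_i\|\le L$ for the perturbation part, followed by the three-factor decomposition.
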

Let us introduce some notation,
$$
T:=\max\limits_{i\in[m],\underline{\lambda} \in B}
t_i(\underline{\lambda} ),
\quad
\widetilde{T}:=\max\limits_{i\in[m],\underline{\lambda} \in B}
\|\nabla t_i(\underline{\lambda} )\|,
%\quad
%\widetilde{R}:=\max\limits_{i\in[m],\underline{\lambda} \in B}
%\|\nabla r_i(\underline{\lambda} )\|,
$$
%where $[m]=\left\{1, \dots ,m\right\}$
and
\begin{equation*}
f_{\ii,\jj}(\underline{\lambda}) := \Pi_{\underline{\lambda}}(\ii)-\Pi_{\underline{\lambda}}(\jj),
\end{equation*}
where in the self-similar case $\Pi_{\underline{\lambda}}(\ii)=\sum_{n=1}^{\infty}r_{i_1\ldots {i_{n-1}}}t_{i_n}$. Hence, for all $\mathbf{i},\mathbf{j}\in\Sigma$,
\begin{equation}\label{eq:75}
\max\limits_{\underline{\lambda} \in B} |f_{\ii,\jj}(\underline{\lambda})| \leq \frac{2T|r_{\mathbf{i}\wedge\mathbf{j}}(\underline{\lambda} )|}{1-\rho},\quad
\max\limits_{\underline{\lambda} \in B} \|\nabla_{\underline{\lambda} }f_{\ii,\jj}(\underline{\lambda})\| \leq \frac{2\widetilde{T}}{1-\rho}
+
\frac{2TL}{(1-\rho)^2}:=C_2.
\end{equation}
Moreover, there exists a $K>0$ such that
\begin{equation}\label{eq:67}
\left|
\frac{\partial f_{\mathbf{i},\mathbf{j}}}{\partial\lambda_\ell }(\underline{\lambda} _1)
-
\frac{\partial f_{\mathbf{i},\mathbf{j}}}{\partial\lambda_\ell }(\underline{\lambda} _2)
\right|
<
K \cdot
\|\underline{\lambda} _1-\underline{\lambda} _2\|,
\quad
\forall \underline{\lambda} _1,\underline{\lambda} _2\in B,
\ell \in[m].
\end{equation}

%\begin{claim}\label{claim:01}
%For every $\varepsilon>0$
%\begin{enumerate}[(i)]
%\item $\mathcal{V}_{\varepsilon}$ is dense in $\mathscr{NOSSP}$,
%\item $\mathscr{NOSSP}\setminus \mathcal{V}_{\varepsilon}$ has no density point.
%\end{enumerate}
%\end{claim}

\subsection*{Proof of Proposition~\ref{claim:51}}

Let $\underline{\lambda}_0\in(B\setminus\mathscr{SSP})\cap(\mathrm{int}B)$. Hence, we can choose $\ii,\jj\in\Sigma$ with
\begin{equation*}
i_1\neq j_1\quad
\mbox{ and }\quad
f_{\ii,\jj}(\underline{\lambda}_0) = 0.
\end{equation*}
Let
$$
\delta:=\frac{\zeta}{2K\sqrt{m}},
$$
where $K$ was defined in \eqref{eq:67}. 
Choose $k$ large enough   satisfying
\begin{equation}\label{cv80}
  \frac{4T}{1-\rho}\rho^k<\zeta\frac{\min\{\delta,\mathrm{dist}(\lambda_0,\partial B)\}}{4\sqrt{m}}.
\end{equation}
Then we choose $n$ and $p$ such that 
\begin{equation}\label{eq:04}
\mathbf{i}|_n,\mathbf{j}|_p\in\mathcal{M}_k.
\end{equation}
Now we fix an $\varepsilon>0$  and apply Lemma~\ref{lem:99} for $\iiv:=\mathbf{i}|_n$ and $\jjv:=\mathbf{j}|_p$.
Then Lemma~\ref{lem:99} defines us $\underline{u},\underline{v}\in\Sigma^{\ast}$
and a constant $N$ independent of $\ii,\jj$ such that
\begin{align*}
&\underline{u}|_n = \ii|_n \text{ and } \underline{v}|_p = \jj|_p,  \\
&|\underline{u}|\leq n+N \text{ and } |\underline{v}|\leq p+N, \\
&\frac{|r_{\underline{u}}(\underline{\lambda} _0)|}{|r_{\underline{v}}(\underline{\lambda} _0)|} \in (e^{-\varepsilon/3},e^{\varepsilon/3}).
\end{align*}
For brevity, let $f(\underline{\lambda}):= f_{\underline{u}\mathbf{1},\underline{v}\mathbf{1}}(\underline{\lambda})$. The choice of $\underline{u}$ and $\underline{v}$ implies that
\begin{equation*}\label{eq:68}
| f(\underline{\lambda}_0 )|
\leq
|f_{\underline{u}\mathbf{1},\mathbf{i}}(\underline{\lambda} _0)|
+
|f_{\mathbf{i},\mathbf{j}}(\underline{\lambda} _0)|+
|f_{\underline{v}\mathbf{1},\mathbf{j}}(\underline{\lambda} _0)|
\leq \frac{4T}{1-\rho}\rho^k<\zeta,
\end{equation*}
where the last inequality holds by \eqref{cv80}.Then by the transversality condition \eqref{eq:15} there exists an $\ell \in[m]$ such that
$\frac{\partial f}{\partial\lambda_{\ell }}(\underline{\lambda} _0)>\frac{\zeta}{\sqrt{m}}$.
Using that
$$
\left|
\frac{\partial f}{\partial \lambda_{\ell} }(\underline{\lambda} )
\right| \geq K\|\underline{\lambda} -\underline{\lambda} _0\|+
\frac{\partial f}{\partial \lambda_{\ell} }(\underline{\lambda}_0 ),
$$
we obtain that
\begin{equation}\label{eq:66}
\exists \ell \in[m] \mbox{ such that }  \|\underline{\lambda} -\underline{\lambda} _0\|<\delta
\Longrightarrow
\left|
\frac{\partial f}{\partial \lambda_{\ell} }(\underline{\lambda} )
\right|>\frac{\zeta}{2\sqrt{m}}.
\end{equation}
So, if we choose $k$ so large that
\begin{equation}\label{eq:65}
|f(\underline{\lambda}_0 )|<  \frac{4T}{1-\rho}\rho^k<\frac{\delta\zeta}{4\sqrt{m}},
\end{equation}
then combining \eqref{eq:66} and \eqref{eq:65}, we get that there is $\underline{\lambda} _1$
such that
\begin{equation}\label{eq:64}
f(\underline{\lambda} _1)=0
\mbox{ and }
\|\underline{\lambda}_0 -\underline{\lambda} _1\|
<
\frac{|f(\underline{\lambda} _0)|}{\min\limits_{\|\underline{\lambda} -\underline{\lambda} _0\|<\delta} \left|
	\frac{\partial f}{\partial \lambda_{\ell} }(\underline{\lambda} )
	\right|}
<
\frac{8T\sqrt{m}}{\zeta \cdot (1-\rho)}\rho^{k}
<
\frac{\delta}{2}.
\end{equation}
By the definition of $k$ we 
also see that $\underline{\lambda} _1\in (\mathrm{int} B)$.  We write
\begin{equation*}\label{eq:63}
\eta_k:=\frac{8T\sqrt{m}}{\zeta \cdot (1-\rho)}\rho^{k},\qquad
F_k:=B\left(
\underline{\lambda} _0,2 \cdot \eta_k
\right).
\end{equation*}
Recall that we defined $C_2$ in \eqref{eq:75}.
Let
$$
\xi:=\xi_{k,\varepsilon}:=\frac{\beta^{N+1}\rho^k\varepsilon}{2C_2}.
$$
We claim that $B\left(\underline{\lambda} _1,\xi_{k,\varepsilon}\right)
\subset \mathcal{V}_\varepsilon$ with $\underline{u}\neq\underline{v}\in\Sigma^\ast$, i.e. for every $\underline{\lambda} \in B\left(\underline{\lambda} _1,\xi_{k,\varepsilon}\right)$
\begin{equation}\label{eq:61}
\frac{r_{\underline{u}}(\underline{\lambda} )}{r_{\underline{v}}(\underline{\lambda} )}\in\left(\e{-\varepsilon},\e{\varepsilon}\right)
\end{equation}
and
\begin{equation}\label{eq:58}
|f_{\underline{u}\mathbf{1},\underline{v}\mathbf{1}}
(\underline{\lambda})| < \varepsilon\cdot r_{\underline{u}}.
\end{equation}

First of all, we choose $\varepsilon$ small enough such that
$\xi_{k,\varepsilon}<\eta_k$ for every $k$.
Note that by this and \eqref{eq:64} we have $B\left(\underline{\lambda} _1,\xi_{k,\varepsilon}\right) \subset
F_k$. Moreover, if we choose $k $ so large that
\begin{equation}\label{eq:60}
\eta_k<\frac{\varepsilon\beta}{3L(k+N)},
\end{equation}
then Lemma~\ref{lem:99} implies \eqref{eq:61} for all $\underline{\lambda} \in F_k\supset B\left(\underline{\lambda} _1,\xi_{k,\varepsilon}\right)$.

%Now we prove that
%\begin{equation}\label{eq:62}
%B\left(\underline{\lambda} _1,\xi_{k,\varepsilon}\right)
%\subset \mathcal{V}_\varepsilon.
%\end{equation}

%To verify \eqref{eq:62} it remains to prove that
%\begin{equation}\label{eq:58}
%|f_{\underline{u}\mathbf{1},\underline{v}\mathbf{1}}
%(\underline{\lambda})| < \varepsilon\cdot r_{\underline{u}} \mbox{ for }
%\underline{\lambda} \in B\left(\underline{\lambda} _1,\xi_{k,\varepsilon}\right).
%\end{equation}

To show~\eqref{eq:58}, we fix an arbitrary
$
\underline{\lambda} \in B\left(\underline{\lambda} _1,\xi_{k,\varepsilon}\right)
$.
First we observe that
\begin{equation}\label{eq:57}
\beta^{N+1}\rho^k \leq r_{\underline{u}}(\underline{\lambda} _0)
\leq
2r_{\underline{u}}(\underline{\lambda} ).
\end{equation}
Indeed, the left hand-side in \eqref{eq:57} holds
since $\mathbf{i}|_n\in\mathcal{M}_k$ and $|\underline{u}|-n<N$. The right hand-side of \eqref{eq:57}
follows  from the fact that we choose
$\varepsilon>0$ so small that $\e{\varepsilon/3}<2$ and
\eqref{eq:60} implies that
$\xi_{k,\varepsilon}<\eta_k<\varepsilon\beta/(3L(k+N))$, so \eqref{eq:71} holds, which implies the right hand-side of
\eqref{eq:57}. By \eqref{eq:57}, to show
\eqref{eq:58}, we only need to prove that
\begin{equation}\label{eq:56}
|f_{\underline{u}\mathbf{1},\underline{v}\mathbf{1}}
(\underline{\lambda})| <
\frac{\varepsilon}{2}
\beta^{N+1}\rho^k \;\mbox{ for }
\underline{\lambda} \in  B\left(\underline{\lambda} _1,\xi_{k,\varepsilon}\right).
\end{equation}
To see this, recall the definition of $C_2$ which was given in \eqref{eq:75}.
By the mean-value inequality we get
\begin{equation*}
|f_{\underline{u}\mathbf{1},\underline{v}\mathbf{1}}
(\underline{\lambda})|
=
|f_{\underline{u}\mathbf{1},\underline{v}\mathbf{1}}
(\underline{\lambda})
-
f_{\underline{u}\mathbf{1},\underline{v}\mathbf{1}}
(\underline{\lambda}_1)
|
\leq
C_2
\|\underline{\lambda} -\underline{\lambda} _1\|
\leq
\frac{\varepsilon}{2}\beta^{N+1}\rho^k.
\end{equation*}
That is \eqref{eq:56} holds and so \eqref{eq:58} holds. Thus, we showed that $B\left(\underline{\lambda} _1,\xi_{k,\varepsilon}\right)
\subset \mathcal{V}_\varepsilon$.

To conclude, observe that
\begin{equation*}\label{eq:55}
\frac{\xi_{k,\varepsilon}}{2\eta_k}=
\varepsilon \cdot
\frac{\beta^{N+1}\zeta(1-\rho)}{16 T \sqrt{m}C_2}.
\end{equation*}
This and $B\left(\underline{\lambda} _1,\xi_{k,\varepsilon}\right)
\subset \mathcal{V}_\varepsilon$ implies that for every $k$ large enough we have:
\begin{equation*}\label{eq:54}
\frac{\mathcal{L}_d\left(\left(B\setminus(\mathscr{SSP}\cup
	\mathcal{V}_\varepsilon)
	\right)^c\cap B(\underline{\lambda} _0,\eta_k)\right)}{\mathcal{L}_d\left(B(\underline{\lambda} _0,\eta_k)\right)}
>\mathrm{const} \cdot\varepsilon^m,
\end{equation*}
in other words $B\setminus(\mathscr{SSP}\cup
\mathcal{V}_\varepsilon)$ has no Lebesgue density point. This concludes the proof of Proposition~\ref{claim:51}.
	
\subsection*{Acknowledgment} BB was supported by the grants OTKA PD123970 and the J\'anos Bolyai Research Scholarship of the Hungarian Academy of Sciences. BB and SK were jointly supported by the grant OTKA K123782. IK was financially supported by a Leverhulme Trust Research Project Grant (RPG-2019-034). MR was supported by the National Science Centre grant 2019/33/B/ST1/00275 (Poland).

\bibliographystyle{abbrv}

\end{document}